\documentclass[preprint,12pt]{elsarticle}
\usepackage[OT1]{fontenc}
\usepackage{pslatex}
\usepackage{a4wide}
\usepackage{color}
\usepackage{graphicx}
\usepackage{color}
\usepackage[latin1]{inputenc}
\usepackage{psfrag}
\usepackage{pdfsync}

\usepackage{amsfonts}
\usepackage{amsmath}
\usepackage{amssymb}
\usepackage{amsthm}
\usepackage{enumerate}
\parskip 2mm

\numberwithin{equation}{section}
\newtheorem{Theorem}{Theorem}[section]
\newtheorem{Lemma}[Theorem]{Lemma}
\newtheorem{cor}[Theorem]{Corollary}
\newtheorem{Proposition}[Theorem]{Proposition}

\theoremstyle{remark}

\def\dP{\tilde{P}}
\def\dE{\tilde{E}}
\def\dX{\tilde{X}}

\newcommand{\R}{{\mathbb R}}

\newcommand{\Z}{{\mathbb Z}}
\newcommand{\N}{{\mathbb N}}
\newcommand{\Pe}{{\mathcal P}}
\newcommand{\PP}{{\mathbb P}}
\newcommand{\EE}{{\mathbb E}}

\newcommand{\E}{{\mathcal E}}
\def\M{\mathcal M}
\def\T{\mathcal T} 
\newcommand{\Se}{S}

\def\tT{{T}}
\def\pP{{P}}
\def\eE{{E}}

\def\Q{{\mathcal Q}}

\renewcommand{\S}{{S^{\circ}}}

\def\B{\mathcal B}
\def\rw{B^0_{[t,0]}}

\newcommand{\Div}{{\rm div}}
\newcommand{\sign}{{\rm sg}}

\newcommand{\e}{{\rm e}}
\newcommand{\Sm}{{S^{\rm o}}}

\newcommand{\one}{{\mathbf 1}}

\renewcommand{\paragraph}[1]{\noindent {\bf #1}$\quad$}

\def\R{\mathbb{R}}
\def\C{\mathcal{C}}

\def\H{\mathcal H}

\def\cI{{\mathcal I}}
\def\cJ{{\mathcal J}}
\def\bb{{b}}
\def\uu{{u}}
\def\vv{{s}}
\def\ww{{s'}}
\def\ss{{\mathbf s}}
\def\sso{{\mathbf s^{\circ}}}

\def\Vor{{\rm C}}
\def\Cen{{\rm Cen}}

\def\Nlf{\mathcal{N}}
\def\Vor{{\rm Vor}}
\def\Cen{{\rm Cen}}

\begin{document}

\begin{frontmatter}

\title{Harmonic deformation of Delaunay triangulations}

\author[uba]{Pablo A. Ferrari\corref{cor1}}
\ead{pferrari@dm.uba.ar}
\ead[url] {http://mate.dm.uba.ar/$\sim$pferrari}

\author[uba]{Pablo Groisman\corref{cor1}}
\ead{pgroisma@dm.uba.ar}
\ead[url]{http://mate.dm.uba.ar/$\sim$pgroisma}

\author[usp]{Rafael M. Grisi\corref{cor2}\fnref{fn1,fn2}}
\ead{rafael.grisi@ufabc.edu.br}

\address[uba]{\indent Departamento de Matem\'atica\break
\indent Facultad de Ciencias Exactas y Naturales\break
\indent Universidad de Buenos Aires \break
\indent Pabell\'{o}n I, Ciudad Universitaria\break
\indent C1428EGA Buenos Aires, Argentina.}

\address[usp]{Instituto de Matemática e Estatística\break
\indent Universidade de São Paulo\break
\indent Rua do Matão, 1010. Cidade Universitária\break
\indent S\~ao Paulo, SP, Brasil, CEP 05508-090.}

\cortext[cor1]{Corresponding author}
\cortext[cor2]{Principal corresponding author}

\fntext[fn1]{Present Adress:
Centro de Matem\'atica, Computa\c c\~ao e Cogni\c c\~ao, Universidade Federal do ABC,\\ Av do Estado, 5001, Santo Andr\'e, SP, Brasil, 09210-910}

\fntext[fn2]{Phone: (+55 11) 4996 8123.}

\begin{abstract}
  We construct harmonic functions on random graphs given by Delaunay
  triangulations of ergodic point processes as the limit of the
  zero-temperature harness process.
\end{abstract}

\begin{keyword}
Harness process \sep Point processes \sep Harmonic functions on graphs \sep Corrector
\break
\MSC[\!]{60F17, 60G55, 60K37}
\end{keyword}

\end{frontmatter}

\section{Introduction}

Let $S$ be an ergodic point process on $\R^d$ with intensity $1$ and $\S$ its
Palm version. Call $\Pe$ and $\E$ the probability and expectation associated to
$S$ and $\S$ (we think that $S$ and $\S$ are defined on a common probability
space). The Voronoi cell of a point $s$ in $\S$ is the set of sites in $\R^d$
that are closer to $s$ than to any other point in $\S$. Two points are
\emph{neighbors} if the intersection of the closure of the respective Voronoi
cells has dimension $d-1$. The graph with vertices $\S$ and edges given by pairs
of neighbors is called the Delaunay triangulation of $\S$. The goal is to
construct a function $H:\S\to\R^d$ such that the graph with vertices $H(\S)$ and
edges $\{(H(s),H(s')),\,$ $s$ and $s'$ are neighbors$\}$ has the following
properties: (a) each vertex $H(s)$ is in the barycenter of its neighbors and (b)
$|H(s)-s|/|s|$ vanishes as $|s|$ grows to infinity along any straight line. If
such an $H$ exists, the resulting graph is the \emph{harmonic deformation} of
the Delaunay triangulation of $\S$. The search of such $H$ has been proposed by
Biskup and Berger \cite{Biskup}, who proved its existence in the graph induced
by the supercritical percolation cluster in $\Z^d$; their approach was the
motivation of this paper. The harmonic function $H$
was tacitly present in Sidoravicius and Sznitman \cite{ss} and in Matthieu and
Piatnitski \cite{Mathieu}; the function $H(s)-s$ is called \emph{corrector}. See
also Caputo, Faggionato and Prescott \cite{CFP} for a percolation-type graph in
point processes on $\R^d$. 

\begin{figure}[th]
\begin{center}
\[
\begin{array}{cc}
\includegraphics[height=6cm]{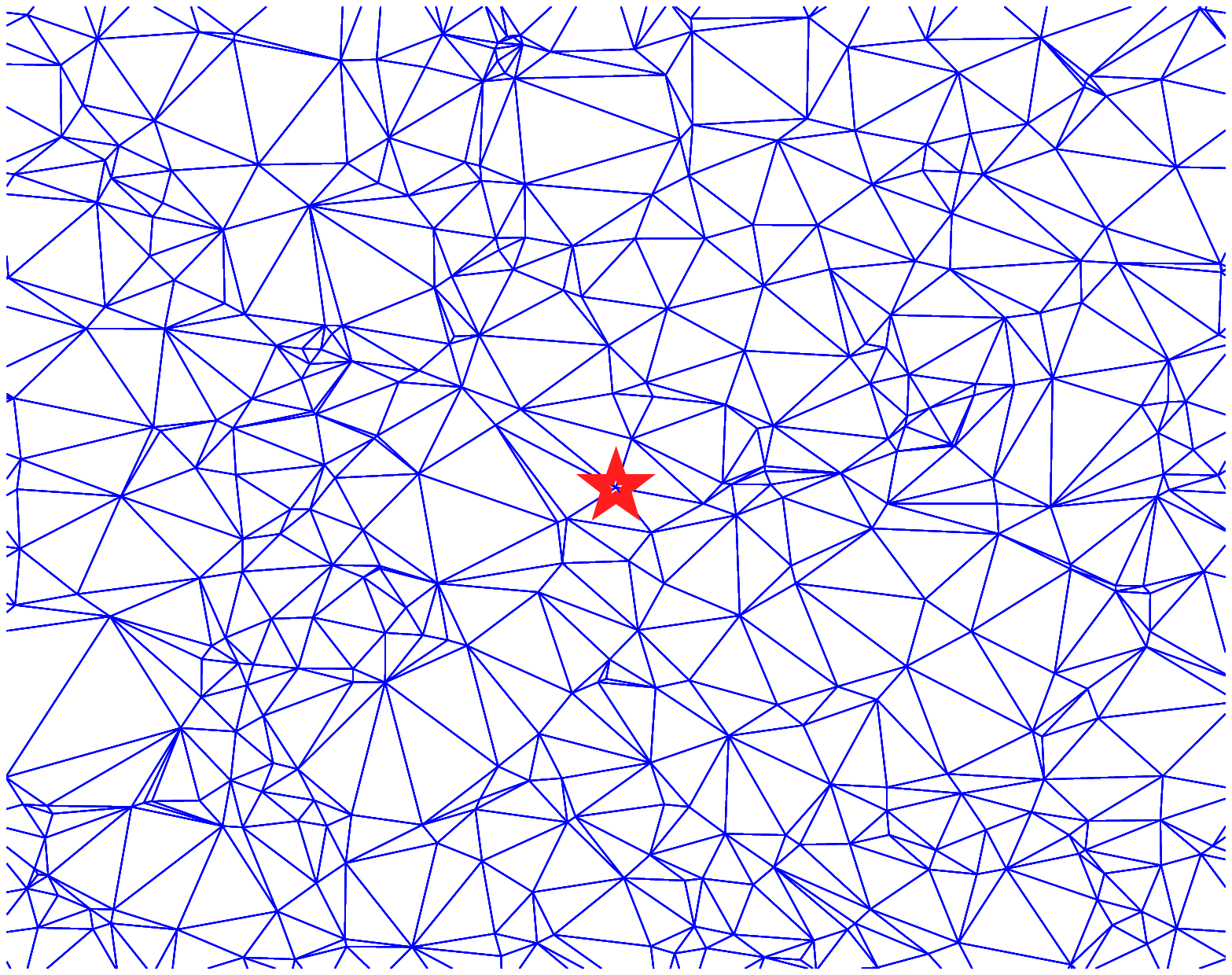} &
\includegraphics[height=6cm]{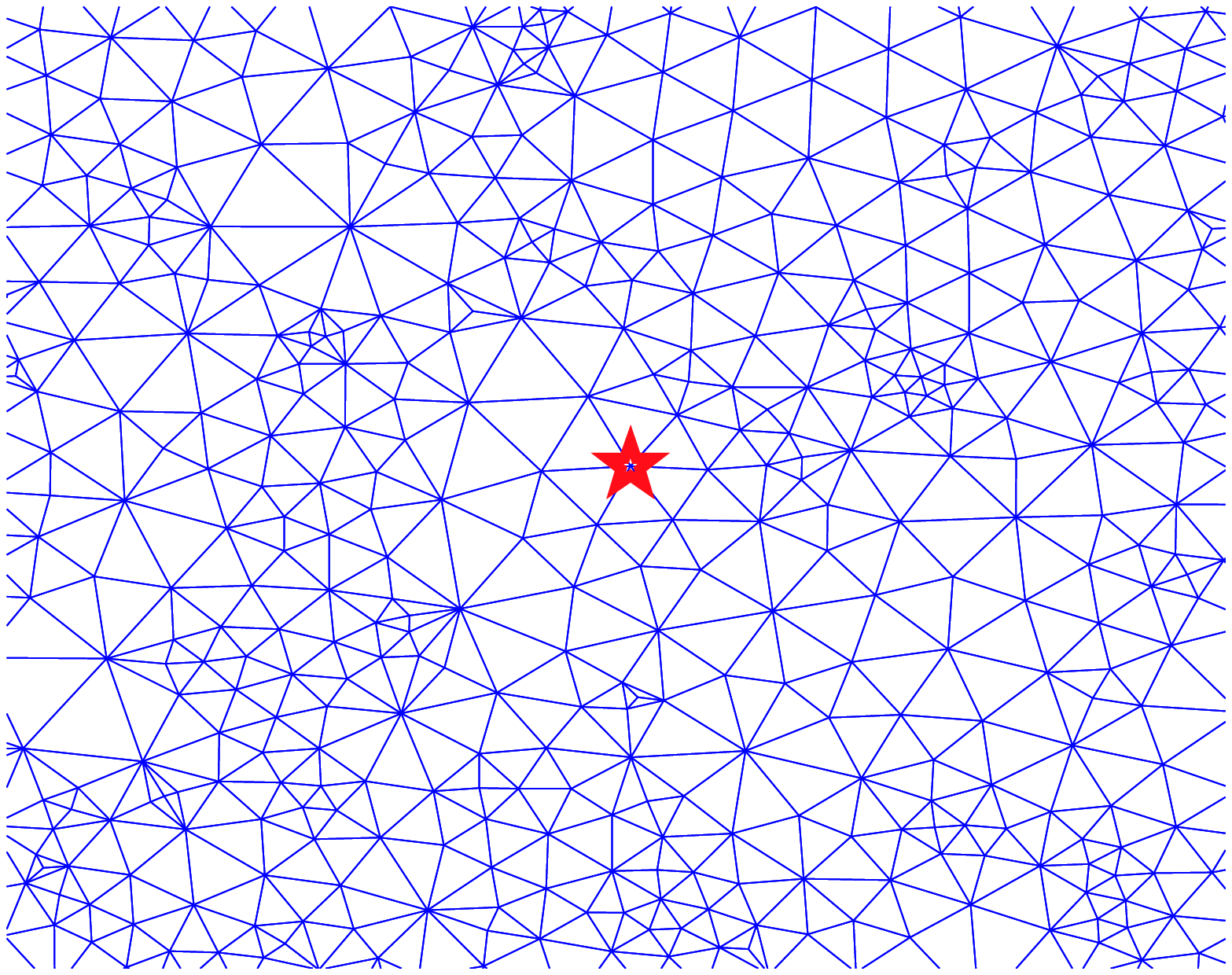}
\end{array}
\]
\caption{Delaunay triangulation of a Poisson process and its harmonic deformation. The star indicates the origin (left) and the point $H(0)$ (right).}
  \label{fig:2s}
\end{center}
\end{figure} 

The functions from $\S$ to $\R$ are called \emph{surfaces}. The coordinates
$h_1,\dots, h_d$ of $H$ are \emph{harmonic} surfaces; that is $h_i(s)$ is the
average of $\{h_i(s')$, $s'$ neighbor of $s\}$. The sublinearity of the corrector,
requirement (b) above, amounts to ask that $h_i$ have \emph{tilt} $e_i$, the
i-th canonical vector of $\R^d$.  Roughly speaking, a surface $f$ has
\emph{tilt} $u$ (a unit vector) if $(f(K\tilde u)-K\tilde u \cdot u)/K$
converges to zero as $K$ goes to $\pm$infinity for every $\tilde u \in \R^d$ (see
\cite{BK,BS,FS}).

Fixing a direction $u$, we construct a harmonic surface $h$ with tilt $u$ as the
limit (and a fixed point) of a stochastic process introduced by Hammersley
called the harness process, \cite{PabloNeader,Ham}. The process is easily described by
associating to each point $s$ of $\S$ a one-dimensional homogeneous Poisson
process of rate 1. Fix an initial surface $\eta_0$ and for each point $s$ at the
epochs $\tau$ of the Poisson process associated to $s$ update $\eta_\tau(s)$ to
the average of the heights $\{\eta_{\tau-}(s'),\, s'$ is a neighbor of $s\}$. It
is clear that if $h$ is harmonic, then $h$ is invariant for this dynamics. We
start the harness process with $\eta_0=\gamma$, the hyperplane defined by
$\gamma(s) = s_i$, the $i$-th coordinate of $s$ and show that
$\eta_t(\cdot)-\eta_t(0)$ converges to $h$ in $L_2(\Pe\times P)$, where $\Pe$ is
the law of the point configuration $\S$ and $P$ is the law of the dynamics.

We prove that the tilt is invariant for the harness process for
each $t$ and in the limit when $t\to\infty$. In a finite graph the
average of the square of the height differences of neighbors is
decreasing with time for the harness process. Since essentially the
same happens in infinite volume, the gradients of the surface converge
under the harness dynamics. It remains to show that: (1) the limit of
the gradients is a gradient field and (2) the limit is harmonic. Both
statements follow from almost sure convergence along subsequences.

A key ingredient of the approach is the expression of the tilt of a
surface as the scalar product of the gradient of the surface with a specific
field (see Section \ref{sec:tilt}). This implies that the limiting
surface has the same tilt as the initial one.

\section{Preliminaries and main result}

\paragraph{Point processes and harmonic surfaces} Let $S$ be an ergodic point
process on $\R^d$ with intensity 1; call $\Pe$ its law and $\E$ the associated
expectation.  The process $S$ takes values in $\Nlf$, the space of locally
finite point configurations of $\R^d$; we use the notation $\ss$ for point
configurations in $\Nlf$ and $S$ for random point processes in $\Nlf$. The
elements $s$ of $\ss$ are called \emph{points} and the elements $x$ of $\R^d$
are called \emph{sites}. In the same way we use $\Nlf^\circ$ for the space of
configurations in $\Nlf$ with a point at the origin and $\sso$ for point
configurations in that space. Let $\S$ denote the Palm version of $S$. We can
think
of $\S$ as $S$ conditioned to have a point in the origin. If $S$ is Poisson,
then $\S=S\cup\{0\}$. We abuse the notation and use $\Pe$ and $\E$ to denote the
law of $\S$ and its associated expectation. For $\ss\in\Nlf$ let the
\emph{Voronoi cell} of $s\in \ss$ be defined by $\Vor(s)=\{x\in\R^d:\, |x-s|\le
|x-s'|,$ for all $s'\in \ss\setminus \{s\}\}$. If the intersection of the
Voronoi cells of $s$ and $s'$ is a $(d-1)$-dimensional surface, we say that $s$
and $s'$ are \emph{Voronoi neighbors}. We consider the random graph with
vertices $\ss$ and edges $\{(s,s')\,:\,s$ and $s'$ are Voronoi neighbors in
$\ss\}$. If $\S$ is the Palm version of a Poisson process, the graph is a
triangulation a.s.\/ called the \emph{Delaunay triangulation} of~$\S$.  To a
site $x\in\R^d$ we associate the \emph{center} of the Voronoi cell containing
$x$: $\Cen(x)=\Cen(x,\ss)=s\in \ss$ if $x\in\Vor(s)$; if $x$ belongs to the
Voronoi cell of more than one point, use lexicographic order of the coordinates
(or any other rule) to decide who is the center. 
Let
\begin{eqnarray}
  \label{x1}
  \Xi_1&:=&\{(s,\ss)\in\R^d\times\Nlf: s\in \ss\}\nonumber\\
\Xi_2&:=&\{(s,s',\ss)\in\R^d\times\R^d\times\Nlf: s,s'\in \ss\}.\nonumber
\end{eqnarray}
Functions $\eta:\Xi_1\to\R$ are called \emph{surfaces} and functions
$\zeta:\Xi_2\to\R$ are called \emph{fields}. Denote by $\tau_x$ the translation
operator: for $x$ in $\R^d$, $\tau_x\ss:=\{s-x: s\in \ss\}$. If
$\eta(s,\ss)=\eta(0,\tau_s\ss)$ for every $s\in\ss$ we say that $\eta$ is a
\emph{translation invariant} surface. A field $\zeta$ is \emph{covariant} if
$\zeta(s'-s,s''-s,\tau_s\ss) = \zeta(s',s'',\ss)$ for all $s,s',s''\in \ss$. A
field $\zeta$ is a \emph{flux} if $\zeta(s,s',\ss)=-\zeta(s',s,\ss)$ for all $s,
s' \in \ss$.  The \emph{conductances} induced by $\ss$ is the field $a$ defined
by
\begin{equation}
  \label{b78}
 a(s,s',\ss):= \one\{s \hbox{ and $s'$ are Voronoi neighbors in
}\ss\}. 
\end{equation}
The \emph{Laplacian} operator is defined on surfaces $\eta$ by
\begin{equation}
  \label{b1}
  \Delta \eta(s,\ss) = \sum_{s'\in \ss} a(s,s',\ss) [\eta(s',\ss) - \eta(s,\ss)]
\end{equation}
 The
\emph{gradient} of a surface $\eta$ is the field defined by 
\[
\nabla \eta(s,s',\ss) = a(s,s',\ss) [\eta(s',\ss) - \eta(s,\ss)].
\]
For fields $\zeta\colon\Xi_2\to \R$ the \emph{divergence} $\Div
\zeta\colon\Xi_1 \to \R$ is given by
\[
\Div \zeta(s,\ss) = \sum_{s'\in \ss}a(s,s',\ss)\zeta(s,s',\ss).
\]
Hence $\Delta\eta = \Div \nabla \eta$. To simplify notation we may drop the
dependence on the point configuration when it is clear from the context. The
Laplacian, gradient and divergence depend on the conductances, but we drop this
dependence in the notation, as they are fixed by \eqref{b78} along the paper.

A surface $h$ is called \emph{harmonic} for $\ss\in\Nlf$ if $\Delta
h(s,\ss)= 0$ for all $s\in\ss$.

\paragraph{Pointwise tilt} We say that for $\ss\in\Nlf$ a surface $\eta$ has
\emph{tilt} $\cI(\eta,\ss)=(\cI_{e_1}(\eta,\ss),\ldots,\cI_{e_d}(\eta,\ss))$ if
for each $u\in\{e_1,\dots,e_d\}$ the following limits for $K\to\pm\infty$ exist,
coincide and do not depend on~$x\in \R^d$
\begin{equation}
  \label{eq:tilt}
  \cI_u (\eta,\ss)
  := \lim_{K\to\pm \infty}
  \frac{\eta(\Cen(x+Ku),\ss)-\eta(\Cen(x),\ss)}{K}.
\end{equation}

\paragraph{Harness process} Given a surface $\eta$, let $M_s\eta$ be the surface
obtained by substituting the height $\eta(s)$ with the average of the heights at
the neighbors of $s$:
\begin{eqnarray}
  \label{a1}
  (M_s\eta)(s') =
  \begin{cases}
\displaystyle    \frac1{a(s)}\sum_{s'\in \ss} a(s,s') \eta(s')&\hbox{if } s'= s,\\
    \eta(s') \qquad\hbox {if }s'\neq s\,,
 \end{cases}
\end{eqnarray}
where $a(s) = \sum_{s'\in S}a(s,s')$. Take a point configuration $\ss$
and define the  generator
\begin{eqnarray}
  \label{generator}
  L_\ss f(\eta) = \sum_{s\in \ss} [f(M_s\eta)-f(\eta)].
\end{eqnarray}
That is, at rate 1, the surface height at $s$ is updated to the average of the
heights at the neighbors of~$s$.  We construct this process as a function of a
family of independent one-dimensional Poisson processes $T=(T_n,\, n=1,2\dots)$
with law $P$. Take an arbitrary enumeration of the points, $\ss=(s_n, n\ge 1)$
(for instance, $s_n$ may be the $n$-th closest point to the origin) and update
the surface at $s_n$ at the epochs of $\tT_n$. When the point configuration is
random, say $\S$, ask $T$ to be independent of $\S$ and define the process as
above to obtain a process $(\eta_t,t\ge 0)$ as a function of $(\S, T)$, with the
product law $\Pe\times P$, and $\eta_0$. The resulting \emph{noiseless harness
  process} is Markov on the space of surfaces with generator $L_\S$. See Section
\ref{harness} for a rigurous construction.

\paragraph{Assumptions} We assume that $\Se$ is a stationary point process in
$\R^d$ with Palm
version $\S$, satisfying the following:
\begin{enumerate}
\item[{A1}] The law of $\Se$ is mixing. 
\item[{A2}] For every ball $B\subset R^d$, $\Pe(|S\cap\partial B| < d+2)=1$.
\item[{A3}] $\E\exp(\beta a(0,\S))<\infty$ for some positive constant
  $\beta$. The number of neighbors of the origin has a finite positive
  exponential moment.
\item[{A4}] $\E[(\ell_{d-1}(\partial\Vor(0,\S)))^2]<\infty$. The $d-1$
  Lebesgue measure of the boundary of the Voronoi cell of the origin has finite
  second moment. 
\item[{A5}] $\E[\sum_{\vv\in\S}a(0,\vv)|\vv|^r]<\infty$ for some $r>4$.

\item[{A6}] $\Pe(S \mbox{ is periodic})=0$.
\end{enumerate}

All these assumptions are satisfied if $S$ is a homogeneous Poisson
process. Assumption {A1} guarantees ``one dimensional'' ergodicity as in
\eqref{eq:incparte1} later. Assumption {A2} is sufficient to define the Delaunay
triangulation. Notice that A4 implies that the volume of the Voronoi
cell of the origin has finite second moment:
$\E[(\ell_{d}(\Vor(0,\S)))^2]<\infty$.

Assumption {A6} is used on the one hand in the Appendix to identify the motion
of a random walk on the Delaunay triangulation with the motion of the
enviroment as seen from the walker. On the other hand ergodicity and
aperiodicity of the point process imply that there exist measurable functions
$s_n:\Nlf\to\R^d$ such that 
\begin{enumerate}
 \item[B1] $s_{-n}(\tau_{s_n}\S) = -s_n(\S)$, 
\item [B2] $\S=\{\vv_n(\S);\,n\in\Z\}$, and
\item [B3] $\tau_{\vv_n(\S)}\S$ has the same distribution as $\S$ for every
$n\in\Z$.
\end{enumerate}
This is used to extend the properties of $\S$ to $\tau_s\S$, for
all $s\in \S$. The point is that $\tau_s\S$ has the same law as $\S$ only if $s$
is correctly chosen as was shown in \cite{MR2044812, Peres} for Poisson
processes and by Timar \cite{Timar04} under the condition that $S$ is ergodic
and $\Pe$-a.s. aperiodic; see Heveling and Last \cite{Last05}.

\begin{Theorem}
\label{t1}
Let $\S$ be the Palm version of the stationary point process satisfying {\rm
  A1-A6} and let $\gamma$ be a surface with covariant gradient, tilt
$I(\gamma)\in\R^d$ and $\C(|\nabla \gamma|^r)<\infty$ for some $r>4$. Then: (a)
There exists a harmonic surface $h$ with $h(0,\S)=0$ and $\cI(h)=\cI(\gamma)$
$\Pe$-a.s. (b) if $\eta_t$ is the harness process with initial condition
$\gamma$, then,
\begin{equation}
  \label{bp1}
  \lim_{t\to\infty}\E E [\eta_t(s_n)-\eta_t(0) - h(s_n)]^2\;=\;0,
\end{equation}
for any $n\in\Z$, with $s_n$ as in {\rm B1-B3}.  (c) In dimensions $d=1$ and $d=2$,
$h$ is the only harmonic surface with covariant gradient and tilt $I(\gamma)$.
\end{Theorem}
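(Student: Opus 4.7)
The plan is to construct $h$ as the $L^2$-limit of the recentered harness process $\tilde\eta_t(s):=\eta_t(s)-\eta_t(0)$ started from $\gamma$. Three features of the dynamics drive the construction: monotone decrease of the Dirichlet energy of the gradient, the $L^2$ representation of the tilt as a pairing against a fixed deterministic field (Section \ref{sec:tilt}), and closedness of the gradient on every finite Delaunay cycle.

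For the energy bound I would exploit that the update $M_s$ replaces $\eta(s)$ by the unique minimiser of $\sum_{s'}a(s,s')[\eta(s')-\eta(s)]^2$ which, combined with the Palm--Campbell formula, should yield a dissipation identity of the form
\[
\frac{d}{dt}\,\E\bigl[(\nabla\eta_t(0,s,\S))^2\bigr]
= -\,c\,\E\bigl[(\Delta\eta_t(0,\S))^2\bigr],
\]
whose time integral is bounded by $\E[|\nabla\gamma|^2]$, finite under the hypothesis together with A3, A5. Hence $\Delta\eta_t\to 0$ in $L^2(\Pe\times P)$, and each gradient component is bounded and Cauchy in $L^2$, converging to a covariant flux $\psi$. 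Extracting an a.s.-convergent subsequence $t_n$ by diagonalization across the countably many Delaunay edges incident to the origin, the closedness relation along every finite Palm cycle and the vanishing of $\Delta\eta_{t_n}$ both pass to the limit, so $\psi=\nabla h$ for the surface $h$ obtained by path-integration from the origin with $h(0,\S)=0$, and $\Delta h\equiv 0$. This proves (a). Then (b) follows by writing $\tilde\eta_t(s_n)-h(s_n)$ as a sum of increments $\nabla\eta_t(u_{k-1},u_k)-\nabla h(u_{k-1},u_k)$ along a fixed Delaunay path $0=u_0,u_1,\dots,u_m=s_n$ and inserting the $L^2$-convergence of the gradient.

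Tilt preservation uses the identity from Section \ref{sec:tilt} which realises $\cI_u(\eta,\S)$ as a deterministic $L^2$-pairing $\langle\nabla\eta,\Phi_u\rangle$ against a field depending only on the conductances. Continuity of this pairing under $L^2$-convergence of the gradient gives $\cI(h)=\cI(\gamma)$ $\Pe$-a.s. For (c), suppose $h'$ is a second harmonic surface with covariant gradient and tilt $\cI(\gamma)$; then $g:=h-h'$ is harmonic with covariant gradient and zero tilt. In $d\in\{1,2\}$, under A1-A6 the simple random walk on the Delaunay triangulation with conductances $a$ is recurrent (via a comparison with the Euclidean heat kernel controlled by A3-A4), and harmonic functions of finite Dirichlet energy on recurrent reversible networks are constant; zero tilt together with the pairing identity forces $\E[|\nabla g|^2]=0$, hence $g\equiv 0$ since $g(0)=0$.

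The main obstacle is the passage from $L^2$-convergence of gradients to a genuine gradient field and to pointwise harmonicity: weak $L^2$ limits do not automatically preserve the closed-form condition on cycles or the pointwise Laplacian, so the a.s. subsequential extraction must be coordinated uniformly across the Palm graph. This is precisely where A3 (exponential bound on the number of neighbours of the origin) and A5 (gradient moment weighted by $|\vv|^r$, $r>4$) enter: the former controls sums over neighbours and the latter guarantees integrability of the telescoping sums defining $h$ at far-away points.
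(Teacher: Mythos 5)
Your sketch for (a) and (b) follows the paper's overall strategy (dissipation of the Dirichlet form, subsequential a.s.\ convergence along edges to recover closedness and harmonicity, a pairing with a deterministic field to preserve tilt), but two steps are claimed rather than established, and your argument for (c) has a genuine flaw.

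First, the claim that each gradient component is ``bounded and Cauchy in $L^2$'' is exactly the hard part and does not follow merely from the fact that $\C(|\nabla\eta_t|^2)$ decreases. Monotone decreasing energy gives boundedness and hence weak compactness, but not Cauchyness; one needs to show that $\C(\nabla\eta_t\cdot\nabla\eta_s)$ converges to the limiting energy. The paper does this by writing $\eta_t=\psi_t+\gamma$ with $\psi_t$ translation invariant, using integration by parts to show $\C(\nabla\psi_{t_k}\cdot\zeta_\infty)=\lim_j\C(\psi_{t_k}\cdot\Delta\eta_{t_j})\to 0$ via the dissipation estimate, which pins down a unique weak limit and also yields norm convergence; strong convergence then follows from weak-plus-norm. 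Your proposal needs this or an equivalent argument rather than asserting Cauchyness. Similarly, for tilt you need not just continuity of the pairing $\langle\nabla\eta,\omega_u\rangle$ in $\H$ but also the fact that this pairing is invariant under the dynamics, $\cJ_u(\eta_t)=\cJ_u(\gamma)$ for all $t$; the paper proves this (Proposition 4.1) using the translation invariance of $\psi_t$, integration by parts, and $\Div\omega_u=0$, and your sketch does not supply this step.

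Second, your argument for (c) does not go through. The Liouville-type statement ``harmonic functions of finite Dirichlet energy on recurrent reversible networks are constant'' requires the \emph{total} Dirichlet energy $\sum_{\vv,\ww}a(\vv,\ww)|\nabla g(\vv,\ww)|^2<\infty$ on the realized infinite graph. What is available here is $\C(|\nabla g|^2)<\infty$, which is a Palm average (an intensive quantity); if $\nabla g$ is covariant and $\C(|\nabla g|^2)>0$ then the total Dirichlet energy is a.s.\ infinite, so the network-theoretic Liouville theorem cannot be invoked, and it would be circular to assume what one wants to prove. You would also need to establish recurrence under A1--A6, which is not stated and is nontrivial. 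The paper instead proves uniqueness for $d=2$ via the sublinear corrector bound (Theorem 5.1 of Berger--Biskup), using harmonicity to telescope the Ces\`aro sum of $|\nabla h|^2$ onto a boundary term controlled by the sup-norm of $h$ on $[-n,n]^2$, which the corrector bound shows is $o(n)$; that is a genuinely different and more delicate route.
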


Let $c\in\R^d$; the \emph{hyperplane} $\gamma(s,\S) = c\cdot s, \, s\in \S$
satisfies the hypotheses of the theorem with $I(\gamma)=c$. Items (a) and (b) of
the theorem say that a surface with tilt $c$ evolving along the harness process
and seen from the height at the origin converges in $L_2(\Pe\times P)$ to a
harmonic surface $h$ with the same tilt and with $h(0)=0$.

Let $H=(h_1,\dots,h_d)$, where $h_i$ is the harmonic surface obtained in Theorem
\ref{t1} for the tilt~$e_i$. The graph with vertices $H(\S)=(H(s),\,s\in \S)$
and conductances $\tilde a(H(s),H(s')):= a(s,s')$ is harmonic:
\begin{equation}
\label{b44}
  H(s) = \frac{1}{a(s)}\sum_{s'\in \S} a(s,s')\, H(s')
\end{equation}
that is, each point is in the barycenter of its neighbors in the neighborhood
structure induced by the Delaunay triangulation of $\S$. This graph, called the
\emph{harmonic deformation} of the Delaunay triangulation, does not coincide
with the Delaunay triangulation of $H(\S)$.

\paragraph{\it Random walks in random graphs and martingales.} 
Let $Y_t=Y_t^\S$, be the random walk on $\S$ which jumps from $s$ to $s'$ at
rate $a(s,s')$.  Since $H(\S)$ is harmonic, the random walk $H(Y_t)$ on $H(\S)$
is a martingale and it satisfies the conditions of the martingale central limit
theorem (Durrett, \cite[page 417]{Durrett}). So, the invariance principle holds
for $H(Y_t)$. The extension of the invariance principle from the walk $H(Y_t)$
to the walk $Y_t$ requires the sublinearity in $|s|$ of the corrector $\chi(s)=
H(s)-s$.

\paragraph{\it Corrector.} Mathieu and Piatnitski \cite{Mathieu} and Berger and
Biskup \cite{Biskup} construct the corrector for the graph induced by the
supercritical percolation cluster in $\Z^d$. Both papers prove sharp bounds on
the asymptotic behavior of the corrector and, as a consequence, the quenched
invariance principle for $Y_t$ for every dimension $d\ge 2$. Key ingredients in
those proofs are heat kernels estimates obtained by Barlow \cite{barlow} (in
\cite{Biskup} they are used just for $d\ge 3$). Sidoravicius and Sznitman
\cite{ss} also used the corrector to obtain the quenched invariant principle for
$d\ge 4$. Several papers obtain generalizations of similar results on subgraphs
of $\Z^d$ \cite{BarlowDeuschel,BiskupPrescott, Mathieu}.  Caputo, Faggionato and
Prescott \cite{CFP} use the corrector to prove a quenched invariance principle
for random walks on random graphs with vertices in an ergodic point process on
$\R^d$ and conductances governed by i.i.d.\ energy marks.

\paragraph{\it Uniqueness.} The uniqueness of (the gradients of) a harmonic
function with a given tilt has been proved by Biskup and Spohn \cite{BS} for the
graph with conductances associtated to the bonds of $\Z^d$ under ``ellipticity
conditions'' (see (5.1) and Section 5.2 in that paper) and by Biskup and
Prescott \cite{BiskupPrescott} in the bond percolation setting in $\Z^d$ using
``heat kernel'' estimates, see Section \ref{uniqueness} later.

We obtain harmonic surfaces as limits of the zero temperature harness
process. The tilt of a surface is obtained as a scalar product with a
specific field and it is invariant for the process. This allows us to show that
the harmonic limits have the same tilt as the initial surface.

The paper is organized as follows. In Section \ref{space} we give basic
definitions, define the space $\H$ of fields as a Hilbert space and show a
useful integration by parts formula. In Section \ref{sec:tilt} we show that the
coordinates of the tilt of a surface can be seen as the inner product of its
gradient with a specific field in $\H$. In Section \ref{construction} we
describe the Harris graphical construction of the Harness process. In Section
\ref{lemmas} we prove the main theorem. Section \ref{uniqueness} deals with the
uniqueness of the harmonic surface in $d=2$.  

\section{Point processes, fields and gradients}
\label{space}

Let $\Nlf=\Nlf(\R^d)$ be the set of all locally finite subsets of $\R^d$, that
is, for all $\ss\in\Nlf$, $|\ss\cap B|$, the number of points in $\ss\cap B$, is
finite for every bounded set $B\subset\R^d$. We consider the $\sigma$-algebra
$\B(\Nlf)$, the smallest $\sigma$-algebra containing the sets $
\{\ss\in\Nlf\colon |\ss\cap B|=k\}$, where $B$ is a bounded Borel set of
$\R^d$ and $k$ is a positive integer.

\paragraph{Ces\`aro means and the space $\H$.}
Let $\C$ be the measure in $\Xi_2$ defined on $\zeta\colon\Xi_2 \to \R$ by
\begin{equation}
\label{norm.H}
\C(\zeta) = \int_{\Xi_2} \zeta \, d\C
=\frac{1}{2}\E\sum_{\vv\in \S}a(0,\vv,\S)\zeta(0,\vv,\S)
\end{equation}
This measure is absolutely
  continuous with respect to the second order Campbell measure associated to
  $\Pe$ with density $Z(u,v,\ss)=a(u,v,\ss)\delta_0(u)$. The space
$\H:=L_2(\Xi_2, \R, \C)$ is Hilbert with inner product
$\C(\zeta\cdot\zeta')$, where the field $(\zeta\cdot\zeta')$ is defined by
$$(\zeta\cdot\zeta')(s,s',\S) = a(s,s',\S)\zeta(s,s',\S)\zeta'(s,s',\S).$$
If two fields $\zeta$ and $\zeta'$ coincide in the pairs $(0,s)$ for all $s$
neighbor of the origin, then their difference has zero $\C$-measure and hence a
field in $\H$ is characterized by its values at $((0,s),\,s$ neighbor of the
origin).  Define the equivalence relation $\zeta\sim\zeta'$ if and only if
$\zeta(0,s,\sso)=\zeta'(0,s,\sso)$, for all neighbor $s$ of the origin. Each
class of equivalence in $\H$ has a canonical covariant representant obtained by
$\zeta(s,s',\ss):=\zeta(0,s'-s,\tau_s \ss)$ for $s,s' \in \ss$. So hereafter,
when we refer to a field in $\H$, we assume that it is the covariant
representant.

The space $\H$ was previously considered by Mathieu and Piatnitski
\cite{Mathieu}  when $(\S, a)$ are given by
the infinite cluster for supercritical percolation in $\Z^d$. The Hilbert
structure of this space is useful to obtain weak convergence for the dynamics.

Define the Ces\`aro limit of a field $\zeta \colon \Xi_2 \to \R$ by
\begin{equation}
  \label{cesaro}
  C(\zeta):=\lim_{\Lambda \nearrow \R^d}\frac{1}{2|\Lambda|}
\sum_{\{s,s'\}\cap \Lambda \neq \emptyset}a(s,s',S)\zeta(s,s',S),
\end{equation}
where $\Lambda=\Lambda(K):= [-K,K]^d\subset \R^d$. Since $S$ is ergodic, the
Pointwise Ergodic Theorem \cite[pp. 318]{VereJones2} implies
$C(\zeta)=\C(\zeta)$, $\Pe$-a.s. Analogously, for translation invariant surfaces
$\eta$ we define its Ces\`aro mean $C(\eta)$ (with a slight
abuse of notation) and we have $C(\eta) = \C(\eta) = \E(\eta(0,\Sm))$.

\begin{Lemma} [Mass Transport Principle
  \cite{BenajaminiLyonsPeresSchramm,BenjaminiSchramm, Haggstrom, Peres}]
  \label{masstransport} Let $\zeta:\Xi_2\to\R$ be a covariant field such that either
  $\zeta $ is nonnegative or $\E\sum_{\vv\in\S}|\zeta (0,\vv,\S)|<\infty$. Then
\begin{equation}
\label{mtp1}
\E\sum_{\vv\in\S}\zeta (0,\vv,\S)  =\E\sum_{\vv\in\S}\zeta (\vv,0,\S).
\end{equation}
\end{Lemma}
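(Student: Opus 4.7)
The plan is to reduce the identity to the Palm--Mecke (refined Campbell) formula applied twice to the same stationary integral summed in two different orders. Fix a Borel set $A \subset \R^d$ with $|A| = 1$, let $\EE$ denote expectation under the stationary law of $S$ (intensity one), and set
\[
 J := \EE \sum_{s \in S \cap A} \sum_{s' \in S} \zeta(s, s', S).
\]
By covariance of $\zeta$, $\sum_{s' \in S} \zeta(s, s', S) = G(\tau_s S)$ with $G(\ss) := \sum_{u \in \ss} \zeta(0, u, \ss)$, so that $J = \EE \sum_{s \in S} \one_A(s)\, G(\tau_s S)$. The Palm--Mecke formula then yields $J = |A|\, \E G(\S) = \E \sum_{s' \in \S} \zeta(0, s', \S)$, which is the left-hand side of \eqref{mtp1}.

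Next, I would swap the order of summation in the definition of $J$ and use the covariance in the reverse form $\zeta(s, s', S) = \zeta(s - s', 0, \tau_{s'} S)$. Writing $u := s - s'$, the inner sum becomes $\sum_{u \in \tau_{s'} S} \one_A(u + s')\, \zeta(u, 0, \tau_{s'} S)$, so Palm--Mecke applied to the outer sum over $s'$ gives
\[
 J \;=\; \int_{\R^d} \E \sum_{u \in \S} \one_A(u + y)\, \zeta(u, 0, \S)\, dy,
\]
and Fubini together with $\int \one_A(u + y)\, dy = |A| = 1$ collapses this to $\E \sum_{u \in \S} \zeta(u, 0, \S)$, the right-hand side of \eqref{mtp1}. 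Equating the two evaluations of $J$ closes the argument.

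The only delicate point is justifying the interchange of sums and integrals in the signed case. In the nonnegative case Tonelli applies directly and nothing further is required. In the $L^1$ case, I would first run the first Palm--Mecke computation with $|\zeta|$ in place of $\zeta$: the hypothesis $\E \sum_{s'} |\zeta(0, s', \S)| < \infty$ then shows that the double sum inside $J$ has finite $\EE$-expectation, which certifies absolute convergence, legitimises both the swap of sums and the Fubini step in the second evaluation, and (as a by-product) yields integrability of $\sum_{u \in \S} \zeta(u, 0, \S)$. Beyond this integrability bookkeeping, the proof is a purely formal application of Palm--Mecke and covariance.
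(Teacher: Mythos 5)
Your proof is correct, and it takes a genuinely different route from the paper's. The paper invokes the measurable enumerations $s_n$ from B1--B3 (whose existence depends on the aperiodicity assumption A6 via point-shift theory) and executes a chain of Fubini swaps and relabelings $n\mapsto -n$ to transport the integrand from the origin to the random point and back. You instead evaluate a single auxiliary quantity $J=\EE\sum_{s\in S\cap A}\sum_{s'\in S}\zeta(s,s',S)$ twice via the refined Campbell (Palm--Mecke) formula, once summing over the first argument and once over the second, with covariance of $\zeta$ used in each direction to re-centre at the origin and Fubini collapsing the $dy$ integral because $\ell_d(A)=1$. This is the classical proof of the mass-transport principle for stationary point processes and, notably, it needs only stationarity and the Campbell formula---not aperiodicity---so in that respect it is more robust than the paper's argument, which is tailored to the point-shift machinery already set up for other purposes. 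Your integrability bookkeeping is also sound: running the first Palm--Mecke computation with $|\zeta|$ shows $\EE\sum_{s\in S\cap A}\sum_{s'\in S}|\zeta(s,s',S)|=\E\sum_{s'\in\S}|\zeta(0,s',\S)|<\infty$, which legitimises every Fubini/Tonelli interchange and, as you note, gives the integrability of the right-hand side for free. The trade-off is that your proof treats the Mecke equation as a black box, whereas the paper's proof is self-contained once B1--B3 are granted; given that the paper already assumes A6 and uses the $s_n$ extensively elsewhere, both choices are reasonable.
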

\begin{proof} Let $s_n$ be the maps introduced in B1-B3. Use B2 and Fubini in
the first identity and covariance of $\zeta$ in the second one to obtain
\begin{align*} 
  &\E \sum_{\vv\in\S}\zeta (0,\vv,\S)
  \;=\;\sum_{n\in\Z}\E \zeta   (0,s_n(\S),\S)
  \;=\;\sum_{n\in\Z}\E \zeta (-s_{n}(\S),0,\tau_{s_n(\S)}\S) \\
  &\quad 
  \;=\;\sum_{n\in\Z}\E \zeta (s_{-n}(\tau_{s_n(\S)}\S),0,\tau_{s_n(\S)}\S)
  \;=\;\sum_{n\in\Z}\E \zeta (s_{-n}(\S),0,\S) 
  \;=\;\E \sum_{\vv\in\S}\zeta (s,0,\S),
\end{align*} 
where we used B1 in the third identity, B3 in the fourth one and Fubini and
B2 again in the fifth one. 
\end{proof}

\begin{Lemma}[Integration by parts formula]\label{intporpartes} Let $\zeta\in
  \H$ be a flux and $\phi$ be a translation invariant surface satisfying $\E
  [a(0)\phi^2(0)]<\infty$. Then
\begin{equation}\label{partes}
\C(\nabla \phi\cdot\zeta)=-\C(\phi\cdot\Div\zeta).
\end{equation}
\end{Lemma}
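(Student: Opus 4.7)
My plan is to unfold both sides using the definitions and then use the flux property of $\zeta$ together with the Mass Transport Principle (Lemma~\ref{masstransport}) to identify the terms.

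First I would write out the left-hand side. Using $a(\cdot,\cdot)\in\{0,1\}$ (so $a^2=a$),
\begin{equation*}
\C(\nabla\phi\cdot\zeta)
=\tfrac12\,\E\sum_{\vv\in\S} a(0,\vv,\S)\bigl[\phi(\vv,\S)-\phi(0,\S)\bigr]\zeta(0,\vv,\S),
\end{equation*}
and split this into two pieces $A-B$, where $A$ involves $\phi(\vv)$ and $B$ involves $\phi(0)$. The right-hand side reads $\C(\phi\cdot\Div\zeta)=\E\bigl[\phi(0,\S)\sum_{\vv\in\S} a(0,\vv,\S)\zeta(0,\vv,\S)\bigr]=2B$, so the claim reduces to showing $A=-B$.

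Next, I would apply the Mass Transport Principle to the covariant field $\xi(u,v,\ss):=a(u,v,\ss)\phi(v,\ss)\zeta(u,v,\ss)$, which is indeed covariant since $a$ and $\zeta$ are covariant and $\phi$ is translation invariant. This gives
\begin{equation*}
\E\sum_{\vv\in\S} a(0,\vv)\phi(\vv)\zeta(0,\vv)
=\E\sum_{\vv\in\S} a(\vv,0)\phi(0)\zeta(\vv,0)
=-\E\sum_{\vv\in\S} a(0,\vv)\phi(0)\zeta(0,\vv),
\end{equation*}
where in the last equality I used the symmetry $a(\vv,0)=a(0,\vv)$ and the flux property $\zeta(\vv,0)=-\zeta(0,\vv)$. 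This says $2A=-2B$, hence $A-B=-2B=-\C(\phi\cdot\Div\zeta)$, which is the desired identity.

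The one step that requires care is justifying the Mass Transport Principle, whose hypothesis is either nonnegativity or absolute summability. Here $\xi$ is signed, so I need $\E\sum_{\vv\in\S}a(0,\vv)|\phi(\vv)\zeta(0,\vv)|<\infty$. By Cauchy--Schwarz on the counting measure $\sum_{\vv}a(0,\vv,\S)$,
\begin{equation*}
\E\sum_{\vv\in\S}a(0,\vv)|\phi(\vv)\zeta(0,\vv)|
\le \Bigl(\E\sum_{\vv\in\S}a(0,\vv)\phi(\vv)^2\Bigr)^{1/2}\Bigl(\E\sum_{\vv\in\S}a(0,\vv)\zeta(0,\vv)^2\Bigr)^{1/2}.
\end{equation*}
The second factor equals $\sqrt{2\,\C(\zeta\cdot\zeta)}<\infty$ because $\zeta\in\H$. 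For the first factor, apply the Mass Transport Principle to the nonnegative covariant field $a(u,v)\phi(v)^2$ to get $\E\sum_{\vv}a(0,\vv)\phi(\vv)^2=\E\sum_{\vv}a(\vv,0)\phi(0)^2=\E[a(0)\phi(0)^2]$, which is finite by hypothesis. This is the only technical point; once integrability is in hand, the algebraic manipulation above completes the proof.
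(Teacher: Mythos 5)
Your proof follows the same route as the paper: expand $\nabla\phi$ to split $\C(\nabla\phi\cdot\zeta)$ into two terms, apply the Mass Transport Principle to the covariant field $a(u,v)\phi(v)\zeta(u,v)$, then use symmetry of $a$ and the flux property of $\zeta$ to identify the result with $-\C(\phi\cdot\Div\zeta)$. The only difference is that you explicitly verify the absolute-summability hypothesis of the Mass Transport Principle via Cauchy--Schwarz and a second (nonnegative) application of MTP to reduce to $\E[a(0)\phi(0)^2]<\infty$, a point the paper's proof leaves implicit; this is a welcome refinement, not a different argument.
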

\begin{proof}
Note that
\begin{align*}
  \C(\nabla \phi\cdot\zeta)
  &=\frac12\E \sum_{\vv\in\S}a(0,\vv,\S)\nabla \phi(0,\vv,\S)\zeta(0,\vv,\S)  \\
  &=\frac12\E \sum_{\vv\in\S}a(0,\vv,\S)\phi(\vv,\Sm)\zeta(0,\vv,\S)
  -\frac12\E \sum_{\vv\in\S}a(0,\vv,\S)\phi(0,\S)\zeta(0,\vv,\S)  \\
  &=\frac12\E \sum_{\vv\in\S}a(0,\vv,\S)\phi(\vv,\S)\zeta(0,\vv,\S)
  -\frac12\E [\phi(0,\S)\Div\zeta(0,\S)].
\end{align*}
Since $\zeta$ and $a$ are covariant and $\phi$ is translation invariant,
$a(\vv,\vv',\S)\phi(\vv',\S)\zeta(\vv,\vv',\S)$ is covariant and Lemma
\ref{masstransport} implies
\begin{align*}
&\E \sum_{\vv\in\S}a(0,\vv,\S)\phi(\vv,\S)\zeta(0,\vv,\S) \; = \;
\E \sum_{\vv\in\S}a(\vv,0,\S)\phi(0,\S)\zeta(\vv,0,\S)  \\
&\qquad = \;-\E \sum_{\vv\in\S}a(0,\vv,\S)\phi(0,\S)\zeta(0,\vv,\S) 
 \; = \;-\E [\phi(0,\S)\Div\zeta(0,\S)]  .
\end{align*}
We used that $\zeta$ is a flux and $a$ is symmetric.
\end{proof}

\section{Tilt}

We define here the ``integrated tilt'' $\cJ(\eta)$ for surfaces $\eta$ with
covariant gradient $\nabla\eta\in\H$.  The coordinates of $\cJ(\eta)$ are
defined as the inner product of the gradient field $\nabla\eta$ with a
conveniently chosen field. We then prove that the pointwise tilt $\cI(\eta,\S)$
coincides with $\cJ(\eta)$, $\Pe$-a.s.

Take a unit vector $u$ and a point configuration $\sso$.  For neighbors $s$ of
the origin, let $\bb(0,s,\sso)$ be the $(d-1)$-dimensional side in common of the
Voronoi cells of $0$ and $s$ and let $b_u(0,s,\sso)$ be the projection of
$\bb(0,s,\sso)$ over the hyperplane perpendicular to $u$, see Figure
\ref{zeta_u}. Define the field $\omega_u$ by
\begin{equation}
  \label{p12}
  \omega_u(0,s,\sso):= \sign(s\cdot
u)\,a(0,s,\sso)\,\ell_{d-1}(\bb_u(0,s,\sso)).
\end{equation}
\label{sec:tilt}
where $\ell_{d-1}$ is the $(d-1)$-dimensional Lebesgue measure. By assumption
A4, $\omega_u\in\H$ and since $\nabla\eta$ is also in $\H$, we can define
\begin{equation}
  \label{p13}
  \cJ_u(\eta) := \C(\nabla\eta \cdot\omega_u)\quad\hbox{ and }\quad 
\cJ(\eta) := (\cJ_{e_1}(\eta),\dots,\cJ_{e_d}(\eta)).
\end{equation}
\begin{center}
\begin{figure}
\psfrag{s}{\hspace{-3pt}$0$}
\psfrag{sprima}{\vspace{5pt}$s$}
\psfrag{ladovoronoi}{\hspace{0pt}\vspace{-12pt}$\bb(0,s,\sso)$}
\psfrag{proyvoronoi}{\hspace{-35pt}$\bb_u(0,s,\sso)$}
\centerline{\includegraphics[width=6cm]{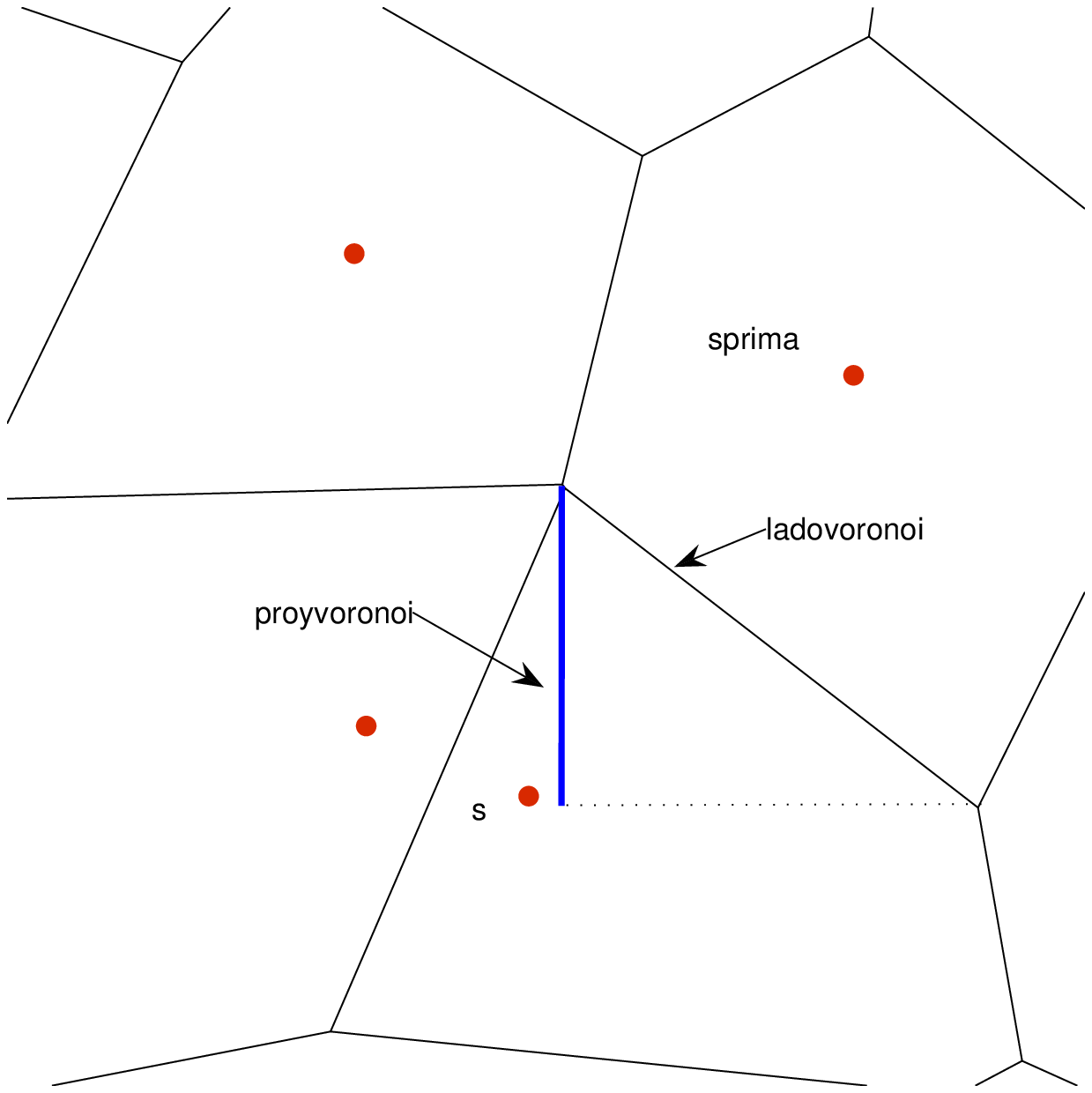}}
\caption{Definition of the field $\omega_u$ for $u=e_1$. }
\label{zeta_u}
\end{figure}
\end{center}
\begin{Proposition}
  \label{tilt}
  Let $\eta$ be a surface with covariant $\nabla\eta\in\H$. Then
\begin{equation}
  \label{p15}
\cI(\eta,\S) = \cJ(\eta),\qquad \Pe\hbox{-almost surely}.
\end{equation}
\end{Proposition}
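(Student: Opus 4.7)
The plan is to relate the pointwise tilt to a spatial average of height increments over a family of parallel rays, evaluate that average by the Pointwise Ergodic Theorem to identify it with the Campbell integral $\C(\nabla\eta\cdot\omega_u)=\cJ_u(\eta)$, and then upgrade the spatial convergence to the pointwise statement through a cocycle argument over the ergodic $u$-flow.

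Fix a coordinate direction $u\in\{e_1,\dots,e_d\}$ and $x\in\R^d$. As $t$ runs over $[0,K]$ the site $x+tu$ traverses a sequence of Voronoi cells whose centres $\Cen(x,\S)=s_0,s_1,\dots,s_N=\Cen(x+Ku,\S)$ are linked by the shared faces $\bb(s_i,s_{i+1},\S)$; by A2 the crossings are transverse. Since $(s_{i+1}-s_i)\cdot u>0$ at each crossing, telescoping gives
\begin{equation*}
\eta(\Cen(x+Ku,\S))-\eta(\Cen(x,\S))=\sum_{\text{faces met by }[x,x+Ku]}\sign((s'-s)\cdot u)\,\nabla\eta(s,s',\S).
\end{equation*}
Integrating $x$ over a $(d-1)$-dimensional box $A\subset u^\perp$ and exchanging sum and integral, a segment starting at $x\in A$ crosses the face $\bb(s,s',\S)$ exactly when $x$ falls into the translated $u$-projection $\bb_u(s,s',\S)$. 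Hence
\begin{equation*}
\int_A\bigl[\eta(\Cen(x+Ku,\S))-\eta(\Cen(x,\S))\bigr]\,dx=\sum_{\text{faces in tube}}\sign((s'-s)\cdot u)\,\nabla\eta(s,s',\S)\,\ell_{d-1}(\bb_u(s,s',\S))+R,
\end{equation*}
where the residual $R$ collects faces only partially projecting into the tube $A\times[0,K]$. Dividing by $|A|\,K$ and letting $A,K\to\infty$ along a Van Hove sequence, the Pointwise Ergodic Theorem for the ergodic $\R^d$-action on $(\Nlf,\Pe)$ (ergodicity supplied by A1) converts the bulk sum into $\C(\nabla\eta\cdot\omega_u)=\cJ_u(\eta)$, $\Pe$-almost surely.

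To promote this spatial convergence to the pointwise equality \eqref{p15}, note that $F(x,K):=\eta(\Cen(x+Ku,\S))-\eta(\Cen(x,\S))$ defines an additive cocycle for the one-parameter $u$-flow on the stationary point process, ergodic by A1; integrability of $F(x,1)$ follows from Cauchy--Schwarz combined with $\nabla\eta\in L^2(\C)$ and A4. Birkhoff's theorem for additive cocycles then gives $\Pe$-almost sure convergence of $K^{-1}F(x,K)$ to a constant as $K\to\pm\infty$, and the spatial computation above forces this constant to equal $\cJ_u(\eta)$. Because $F(x,K)$ is locally constant in $x$ (it only changes when $x$ crosses a cell boundary), the convergence extends from almost every $x$ to every $x\in\R^d$. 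Applied coordinate-wise, this establishes \eqref{p15}. The main technical difficulty is to control the residual $R$ uniformly along the Van Hove sequence so that it contributes $o(|A|\,K)$; this follows from A4 (second moment of face areas) combined with Cauchy--Schwarz against $\nabla\eta\in L^2(\C)$.
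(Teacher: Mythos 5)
Your proof is correct in outline and shares the core mechanism of the paper's argument: telescope the height increment $\eta(\Cen(x+Ku))-\eta(\Cen(x))$ across the Voronoi faces crossed by the segment, then identify the limit of the resulting Ces\`aro sum. Where you diverge from the paper is in how the constant is identified. The paper works with a single line $l_u(0)$, rewrites the face sum as a one-parameter Birkhoff sum $\frac1K\sum_{k=0}^{K-1}\phi(\tau_{ku}\Se)$, applies the one-dimensional ergodic theorem to obtain $\E[\phi(\S)]$, and then computes $\E[\phi(\S)]=\C(\nabla\eta\cdot\omega_u)$ directly via the generalized Campbell formula and the mass transport principle (this is the content of Lemma~\ref{lemma1}). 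You instead integrate over a transverse $(d-1)$-dimensional box $A$, invoke the $d$-dimensional Van Hove ergodic theorem on the tube $A\times[0,K]$ to read off the Ces\`aro mean of the field $\nabla\eta\cdot\omega_u$, and then upgrade the tube average to the pointwise statement by a cocycle/Birkhoff argument in the $u$-direction. That route is legitimate and shows exactly where the projected face areas $\ell_{d-1}(\bb_u)$ come from, but it does explicitly and in two steps what the Campbell formula does implicitly in one (the Campbell formula already ``integrates'' over the transverse directions). Two caveats about the parts you leave informal. First, the cocycle step requires ergodicity of the \emph{one-parameter} flow $\tau_{tu}$, which does not follow from ergodicity of the $\R^d$-action alone; A1 is stated as \emph{mixing} precisely so that this one-dimensional ergodicity is available (the paper flags this point explicitly before \eqref{eq:incparte1}), so make sure you use A1 in that stronger form. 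Second, the tube-boundary residual $R$ and the corresponding error $Z_K$ do need to be controlled; the paper handles $Z_K$ by the same Campbell-formula estimate that proves Lemma~\ref{lemma1} (yielding $\E|Z_0|\le\C(\Div|\nabla\eta|\,\ell(\Vor(0)))<\infty$, hence $Z_K/K\to0$), whereas you assert the bound on $R$ from A4 and Cauchy--Schwarz without carrying it out; the claim is plausible, but to match the rigor of the Van Hove step you would need to show that faces only partially intersecting the tube contribute $o(|A|K)$, which requires a similar computation.
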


Before proving the proposition we show a technical lemma.  Let $O_u$ be the
$d-1$ dimensional hyperplane orthogonal to $u$: $O_u=\{y\in \R^d\,:\,y\cdot u =
0\}$.

For $y\in O_u$ let $l_u(y)=\{y+\alpha\, u;\alpha\in\R\}$, the line containing
$y$ with direction $u$. Fix $\ss\in\Nlf$, define
$L_u(y,\ss):=\{\vv\in\ss\colon\Vor(\vv)\cap l_u(y)\neq\emptyset\}$, the set of
centers of the Voronoi cells intersecting $l_u(y)$. Define
$w:\R^{d}\times\Xi_2\to\{0,1\}$ by
\[w(y;\vv,\ww,\ss)=\begin{cases}
1&\qquad\mbox{ if } \bb(\vv,\ww,\ss)\cap l_u(y)\neq\emptyset;\\
0&\qquad\mbox{ otherwise}
\end{cases},\] the indicator that $\vv$ and $\ww$ are neighbors and its
boundary intersects the line $l_u(y)$. Define also $\theta:\R^{d}\times\Xi_1\to\R$ by
\[
\theta(y;\vv,\ss)=\sum_{\ww\in \ss}\ww
a^+(\vv,\ww,\ss)w(y;\vv,\ww,\ss),
\]
where $a^+(\vv,\ww,\Se)=a(\vv,\ww,\Se)\one\{(\ww\cdot u)>(\vv\cdot u)\}$. In
words, for $s\in L_u(y,\ss)$, $\theta(y;\vv,\ss)$ is the neighbor of $\vv$ in
the direction $u$ such that their boundary intersects $l_u(y)$.

\psfrag{A}{\hspace{-30pt}$\theta(y;s,\Se)$}
\psfrag{B}{$s_K$}
\psfrag{K}{\hspace{-4pt}$K$}
\psfrag{V}{\vspace{51pt}$v$}
\begin{figure}
	\centering
	\includegraphics[width=4cm,angle=-90]{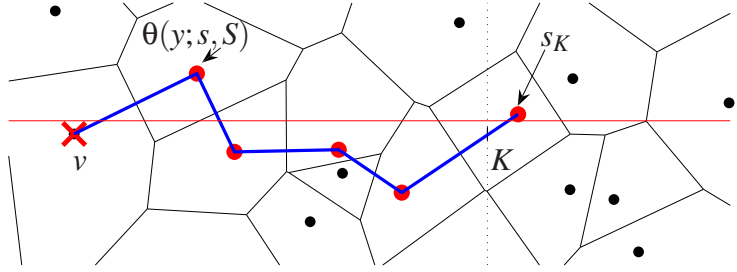}
	\caption{Points of $L(0,\ss)$ (red and big). The horizontal line is $l_y$.}
	\label{fig:tilt}
\end{figure}

For $x\in \R^d$, let $x^*\in O_u$ be the projection of $x$ over the
hyperplane $O_u$.  Observe that $w$ satisfies
\begin{equation}\label{eq:inc1}
  w(y;\vv,\ww,\ss)=w(y-x^*;\vv-x,\ww-x,\tau_x\ss),
\end{equation}
and 
\begin{equation}\label{eq:inc2}
\theta(y;\vv,\ss)-x=\theta(y-x^*;\vv-x,\tau_x\ss),
\end{equation}
for all $x\in\R^d$.

\begin{Lemma}\label{lemma1} Let $\zeta\in \H$ be a flux, $u$ a unit vector and
  $y\in \R^d$. Then
\begin{equation}\label{eq1}
  \E\sum_{\vv\in \Se}\zeta(\vv,\theta(y;\vv,\Se),\Se) 
  \one_{L_u(y,\Se)}(\vv)\one_{A}(\vv\cdot u)	
  =\ell_1(A)\C(\zeta\cdot\omega_{u})
\end{equation}
for all $A\in\B(\R)$ with 1-dimensional Lebesgue measure $\ell_1(A)<\infty$.
\end{Lemma}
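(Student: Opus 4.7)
The plan is to convert the sum on the left-hand side into an $\R^d$-integral under the Palm law of $\S$ by a refined Campbell (Mecke) exchange, split $\R^d=O_u\oplus\R u$ to extract the factor $\ell_1(A)$, and then match the residual $O_u$-integral with $\C(\zeta\cdot\omega_u)$ by combining the geometry of $\Vor(0,\S)$ with the flux antisymmetry of $\zeta$.

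Using \eqref{eq:inc1}-\eqref{eq:inc2} together with covariance of $\zeta$ and $a$, the summand rewrites as $g(y-\vv^{*},\tau_{\vv}\Se)\,\one_A(\vv\cdot u)$, where
$$g(\tilde y,\sso):=\zeta(0,\theta(\tilde y;0,\sso),\sso)\,\one\{0\in L_u(\tilde y,\sso)\};$$
crucially, $g$ depends on $\vv$ only through $\vv^{*}$, not through its $u$-coordinate. Mecke's formula for the unit-intensity stationary process $\Se$ then gives
$$\E\sum_{\vv\in\Se}g(y-\vv^{*},\tau_\vv\Se)\,\one_A(\vv\cdot u)=\int_{\R^d}\E[g(y-x^{*},\S)]\,\one_A(x\cdot u)\,dx.$$
Writing $x=x^{*}+tu$ with $x^{*}\in O_u$, only $\one_A(t)$ depends on $t$, so the $t$-integral produces $\ell_1(A)$; the substitution $\tilde y=y-x^{*}$ leaves
$$\ell_1(A)\cdot\E\int_{O_u}\zeta(0,\theta(\tilde y;0,\S),\S)\,\one\{0\in L_u(\tilde y,\S)\}\,d\tilde y.$$

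For the inner integral, assumption A2 ensures that for $\ell_{d-1}$-a.e.\ $\tilde y$ the line $l_u(\tilde y)$ meets $\partial\Vor(0,\S)$ transversally in the relative interiors of exactly one entry and one exit face; the exit face (on the $u$-positive side) is $\bb(0,s,\S)$ with $s=\theta(\tilde y;0,\S)$. Hence $\{\tilde y\in O_u:\theta(\tilde y;0,\S)=s\}$ coincides with $\bb_u(0,s,\S)$ modulo null sets, and these projected upper faces tile the projection of $\Vor(0,\S)$ onto $O_u$ indexed by neighbors $s$ of $0$ with $s\cdot u>0$. Fubini (whose integrability I verify from $\zeta\in\H$ together with A4 via Cauchy-Schwarz) then reduces the $O_u$-integral to
$$\E\sum_{\vv\in\S,\,\vv\cdot u>0}a(0,\vv,\S)\,\zeta(0,\vv,\S)\,\ell_{d-1}(\bb_u(0,\vv,\S)).$$

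To match this with $\C(\zeta\cdot\omega_u)=\tfrac12\E\sum_{\vv\in\S}a(0,\vv,\S)\zeta(0,\vv,\S)\sign(\vv\cdot u)\ell_{d-1}(\bb_u(0,\vv,\S))$, I apply Lemma~\ref{masstransport} to the covariant field
$$(\vv,\ww,\S)\mapsto a(\vv,\ww,\S)\,\zeta(\vv,\ww,\S)\,\one\{(\ww-\vv)\cdot u>0\}\,\ell_{d-1}(\bb_u(\vv,\ww,\S)).$$
The flux antisymmetry of $\zeta$, combined with the symmetry of $a$ and $\ell_{d-1}(\bb_u)$ under swapping the two point arguments, turns the sum over $\{s\cdot u>0\}$ into minus the sum over $\{s\cdot u<0\}$, so averaging these two expressions yields exactly the $\tfrac12\sign$-form and closes the identity. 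The main obstacle is the geometric step: showing that $\theta(\cdot;0,\S)$ is $\ell_{d-1}$-a.e.\ single-valued on the projection of $\Vor(0,\S)$ with level sets precisely the $\bb_u(0,s,\S)$. This is where A2 enters essentially, ruling out degenerate configurations in which $l_u(\tilde y)$ could graze a $(d-2)$-subface of the Voronoi cell.
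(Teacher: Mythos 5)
Your proposal follows essentially the same route as the paper's proof: a refined Campbell/Mecke exchange to turn the Palm sum into a $\R^d$-integral, factoring out $\ell_1(A)$ via the $u$-coordinate, identifying the $O_u$-level sets $\{\tilde y : \theta(\tilde y;0,\S)=\vv\}$ with $\bb_u(0,\vv,\S)$ to produce $\E\sum_{\vv\cdot u>0}a\,\zeta\,\ell_{d-1}(\bb_u)$, and then using the Mass Transport Principle together with flux antisymmetry and symmetry of $a$ and $\bb_u$ to pass to the $\tfrac12\sign$-form; the integrability check (Cauchy--Schwarz with $\zeta\in\H$ and A4) likewise mirrors the paper's preliminary absolute-value computation. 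The only minor point is your attribution to A2 of the $\ell_{d-1}$-null status of the set of grazing lines: A2 is needed so that the Delaunay graph and hence $\bb(0,\vv,\S)$ are well-defined, while the negligibility of the grazing set is simply the statement that a $(d-2)$-dimensional set projects to a $\ell_{d-1}$-null subset of $O_u$, which holds for any locally finite configuration.
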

The random set $\{(\vv\cdot u)\,:\,\vv\in L_u(y,\Se)\}$ is the one-dimensional
stationary point process obtained by projecting the points of $L_u(y,\Se)$ to
$l_u(y)$. One can think that each point $\vv$ has a weight
$\zeta(\vv,\theta(y;\vv,\Se),\Se)$. The expression on the left of \eqref{eq1} is
the average of these weights for the points projected over $A$. The expression
on the right of \eqref{eq1} says that this average contributes to the expression
as much as the Lebesgue measure of the projection over $O_u$ of the boundary
between $s$ and its neighbor in $L$ to its right.
\begin{proof} 
  By translation invariance we can take $y=0$ and, for simplicity we take
  $u=e_1$, the other directions are treated analogously.  In this case
  $O_u=\{x\in\R^ d\,:\,x_1=0\}$, $s\cdot u=s_1$, the first coordinate of $s$ and
  $x^*=(0,x_2,\dots,x_d)$.  Define
\[
g(\vv,\ss):=\zeta(\vv,\theta(0;\vv,\ss),\ss)\one_{L_u(0,\ss)}(\vv)\one_{A}(\vv_1).
\]
From the Generalized Campbell formula, \eqref{eq:inc2} and Fubini,
\begin{eqnarray}
  \E\sum_{\vv\in \Se}|g(\vv,\Se)|&=&\int_{\R^d}\E|g(x,\tau_{-x}\S)|dx\nonumber\\
&=&\int_{\R^d}\E|\zeta(0,\theta(-x^*;0,\S),\S)|
\one_{L_u(-x^*,\S)}(0)\one_{A}(x_1)dx\nonumber\\
&=&\ell_1(A)\int_{\R^{d-1}}\E\sum_{\vv\in \S}|\zeta(0,\vv,\S)|
\one_{\{\theta(x^*;0,\S)=\vv\}}\one_{L_u(x^*,\S)}(0)dx_2\dots dx_d\nonumber\\
&=&\ell_1(A)\E\sum_{\vv\in \S}|\zeta(0,\vv,\S)|\int_{\R^{d-1}}
\one_{\{\theta(x^*;0,\S)=\vv\}}\one_{L_u(x^*,\S)}(0)dx_2\dots dx_d\label{*1}
\end{eqnarray}
For $\vv\in \S$ such that $a^+(0,\vv,\S)=1$,
\[
\{\vv=\theta(x^*;0,\S),\ 0\in L_u(x^*,\S)\} \;=\;\{ l_u(x^*)\cap
b(0,\vv,\S)\neq\emptyset\}\; =\;\{x^*\in b_{u}(0,\vv,\S)\}.
\]
Hence, the integral in \eqref{*1} gives
$a^+(0,\vv,\S)\ell_{d-1}(b_u(0,\vv,\S))$ and
\begin{align*}
\E\Big|\sum_{\vv\in \Se}g(\vv,\Se)\Big|&=\ell_1(A)\E\sum_{\vv\in
\S}a^+(0,\vv,\S)|\zeta(0,\vv,\S)|\ell_{d-1}(b_u(0,\vv,\S))<\infty,
\end{align*}
because by Assumption A4 the field $\ell_{d-1}(b_u(0,\vv,\S))$ is in $\H$.
With the same computation, 
\begin{align}
\E\sum_{\vv\in \Se}g(\vv,\Se)&=\ell_1(A)\E\sum_{\vv\in
\S}a^+(0,\vv,\S)\zeta(0,\vv,\S)\ell_{d-1}(b_u(0,\vv,\S))\nonumber \\
&=\ell_1(A)\E\sum_{\vv\in
\S}a^+(0,\vv,\S)\zeta(0,\vv,\S)\omega_u(0,\vv,\S).\label{b20}
\end{align}
Since $ a^+(\vv,0,\S)\zeta(\vv,0,\S)\omega_u(\vv,0,\S)
=a^-(0,\vv,\S)\zeta(0,\vv,\S)\omega_u(0,\vv,\S) $, Lemma \ref{masstransport}
implies
\[
\E\sum_{\vv\in \Se}g(\vv,\Se)=\ell_1(A)\frac{1}{2}\E\sum_{\vv\in
  \S}a(0,\vv,\S)\zeta(0,\vv,\S)\omega_u(0,\vv,\S)=\ell_1(A)\C(\zeta\cdot\omega_u).
\qedhere
\]
\end{proof}

\begin{proof}[Proof of Proposition \ref{tilt}]
  Again without loosing generality we take $u=e_1$ and $u\cdot s=s_1$.
  Since $\nabla \eta$ is covariant and
  $\Cen(\cdot)$ is translation invariant (in the sense that $\Cen(x-z,\tau_z
  \ss) = \Cen(x,\ss)$),
\begin{equation}
\eta(\Cen(x+Ku,\Se),\Se)- \eta(\Cen(x,\Se),\Se) 
\;=\;\eta(\Cen(Ku,\tau_x\Se),\tau_x\Se)- \eta(\Cen(0,\tau_x\Se),\tau_x\Se).
\end{equation}
Since $\Se$ is stationary, if the limit in \eqref{eq:tilt} exists, it is
independent of $x\in\R^d$.  Let $K>0$, $A_K:=[0,K]\times\R^{d-1}$ and define
\[
\bar{\vv}_K={\rm argmax}\{(\vv\cdot e_1): \vv\in L(0,\Se)\cap A_K\}, 
\quad \vv_K=\theta(0;\bar{\vv}_K,\Se),
\]
that is, $\vv_K$ is the first point of $L(0,\Se)$ to the right of $A_K$.
Let $Z_K=\eta(\Cen(Ku,\Se),\Se)-\eta(\vv_K,\Se)$ and observe that (see Figure
\ref{fig:tilt})
\begin{equation}\label{b21}
  \eta(\Cen(Ku),\Se)- \eta(\Cen(0),\Se) 
  = \sum_{\vv\in L(0,\Se)\cap A_K} (\eta(\theta(0;\vv,\Se),\Se) - \eta(\vv,\Se))+Z_K
\end{equation}
Using \eqref{b21}, the limit as $K\to+\infty$ in \eqref{eq:tilt} reads
\begin{align}
  \cI_u(\eta,\Se) &= \lim_{K\to\infty}\frac{1}{K} \sum_{\vv\in L(0,\Se)\cap A_K}
  \nabla\eta(\vv,\theta(0;\vv,\Se),\Se))
  +\lim_{K\to\infty}\frac{Z_K}{K}\nonumber\\
  &= \lim_{K\to\infty}\frac{1}{K}\sum_{\vv\in \Se}
  \nabla\eta(\vv,\theta(0;\vv,\Se),\Se))\one_{L(0,\Se)}(\vv)
  \one_{[0,K]}(\vv_1)+\lim_{K\to\infty}\frac{Z_K}{K}.\label{p170}
\end{align}
Since $(Z_K)_{K\geq0}$ is a stationary sequence, the same arguments as in the
proof of Lemma \ref{lemma1} show that $\E|Z_0|\leq
\C(\Div|\nabla\eta|\ell(\Vor(0)))<\infty$, and so $Z_K/K\to 0$ almost surely as
$K\to\infty$. Then it suffices to show that the first limit in \eqref{p170}
converges to $\C(\nabla\eta\cdot\omega_u)$.  The sum in the first term in
\eqref{p170} can be telescoped as follows:
\begin{eqnarray}
\sum_{k=0}^{K-1}\sum_{\vv\in \Se} \nabla
\eta(\vv,\theta(0;\vv,\Se),\Se)\one_{L(0,\Se)}(s)\one_{[k,k+1]}(\vv_1)
&=&\sum_{k=0}^{K-1}\phi(\tau_{ku}\Se)
\end{eqnarray}
where $ \phi(\Se):=\sum_{s\in \Se}\nabla
\eta(s,\theta(0;s,\Se),S)\one_{L(0,\Se)}(s)\one_{[0,1]}(s_1).  $ Since the law
of $\Se$ is mixing, by Birkoff's ergodic theorem, for integer $K$,
\begin{equation}\label{eq:incparte1}
  \lim_{K\to\infty}\frac{1}{K}\sum_{k=0}^{K-1}\phi(\tau_{ku}\Se)\;=\;\E[\phi(\Se)]
  \;=\;\C(\nabla\eta\cdot\omega_u)\qquad\Pe\mbox{-a.s.},
\end{equation}
by Lemma \ref{lemma1}. 
If $K\in\R$, the result follows from the above and
\[
\lim_{K\to\infty}\frac{1}{K}|\eta(\Cen(Ku,\Se))-\eta(\Cen([K]u),\Se)|=0 
\quad \Pe\mbox{-a.s.}
\]
The same arguments work for $K<0$.
\end{proof}

\section{The harness process}
\label{harness}

\label{construction}

Given a configuration of points $\ss \in \Nlf$ we construct the
process $\eta_t^\gamma(\cdot,\ss): \ss\to\R$, with initial condition
given by a surface $\gamma(\cdot,\ss): \ss\to\R$ and generator given
by \eqref{generator}.

\paragraph{\it Graphical Construction}
Let $\tT=(\tT_n,\,n=1,2,\dots)$ be a family of independent Poisson processes of
intensity 1, $\tT_n\subset\R$. For fixed $\ss\in\Nlf$ and an arbitrary
enumeration of the points of $\ss=(s_1,s_2,\dots)$ we use the epochs of $T_n$ to
update the heights at $s_n$ as follows.  Fix $t>0$ and define a family
$(B^s_{[t,u]};u\leq t,\vv\in\ss)$ of backward simple random walks on $\ss$
starting at $\vv\in \ss$ at time $t$ and jumping at the epochs in $\tT$ as
follows. Start with $B^s_{[t,t]}=s$; then, if $\tau\in T_n$ and at time $\tau+$
the walk is at $s_n$ (that is, $B^s_{[t,\tau+]}=s_n$) then the walk chooses
uniformly $s'$, one of the neighbors of $s_n$ with probability
$\frac{1}{|a(s_n,\ss)|}$ and jumps over it, setting $B^s_{[t,\tau]}=s'$. Those
jumps are performed with the aid of independent uniform in $[0,1]$ random
variables $U=(U^k_n, \ k,n\ge 1)$; the variable $U^k_n$ is used to perform the
$k$-th jump from $s_n$. Now consider a random set of points $\S$ with law $\Pe$
and assume $U$, $\tT$ and $\S$ independent. Call $\pP$ and $\eE$ the probability
and expectation induced by $(\tT$, $U$), let $\PP=\Pe\times P$ and call $\EE$
the expectation with respect to $\PP$. Denote
\[
p_t(\vv,\ww,\S,\tT)\;:=\;\PP(B^s_{[t,0]}=\ww\,|\,\S,\tT),
\]
the probability that $B^s_{[t,0]}=\ww$ conditioned on the sigma field generated
by $(\S,\tT)$.  Define $\eta_t^\gamma(\vv,\S,T)$ as the expectation of
$\gamma(B^s_{[t,0]})$ conditioned on the sigma-field generated by $(\S,\tT)$:
\begin{equation}\label{eq:harness}
  \eta_t^\gamma(\vv,\S,\tT)\;:=\;\sum_{\ww\in \S}p_t(\vv,\ww,\S,\tT)\gamma(\ww).
\end{equation}
The $\eta$ process has initial configuration
$\eta_0^\gamma(\vv,\S,\tT)=\gamma(\vv)$ and evolves as follows. If $s=s_n$ and
$\tau\in \tT_{n}$ is an epoch of $\tT_n$, then
\[
p_\tau(\vv,\ww,\S,\tT)
=\sum_{\vv''\in\S}\frac{a(\vv,\vv'',\S)}{a(\vv,\S)}p_{\tau-}(\vv'',\ww,\S,\tT),
\]
and $\eta_\tau^\gamma(\vv,\S,\tT)$ is updated by
\begin{eqnarray}
\label{generatorr}
  \eta_\tau^\gamma(\vv,\S,\tT)
&=&\sum_{\ww\in \S}\sum_{\vv''\in\S}\frac{a(\vv,\vv'',\S)}{a(\vv,\S)}
p_{\tau-}(\vv'',\ww,\S,\tT)\gamma(\ww)\nonumber\\
&=&\sum_{\vv''\in\S}\frac{a(\vv,\vv'',\S)}{a(\vv,\S)}
\eta_{\tau-}(\vv'',\S,\tT)
\end{eqnarray}
while $\eta_\tau^\gamma(\ww,\S,\tT)$ remains unchanged for $\ww \ne \vv$. That
is, $\eta_\tau^\gamma(\cdot,\S,\tT)=M_{\vv}\eta_{\tau-}^\gamma(\cdot,\S,\tT)$.

\begin{Lemma}
  Given $\gamma:\Xi_1\to\R$ with $\nabla\gamma\in\H$, the process
  $\eta_t^\gamma(\cdot,\S,\cdot)$, is well defined $\Pe$-a.s. and has generator
  given by \eqref{generator}.
\end{Lemma}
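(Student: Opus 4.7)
The argument has two parts: (i) $\PP$-a.s.\ absolute convergence of the series \eqref{eq:harness}, and (ii) identification of the generator.

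For (i), the dual walker $B^\vv_{[t,\cdot]}$ jumps at rate $1$ at every state, so the number $N_t$ of jumps in $[0,t]$ is $\mathrm{Poisson}(t)$-distributed and in particular $\PP$-a.s.\ finite. Telescoping $\gamma$ along the walker's edges gives
\[
\sum_{\ww\in\S}p_t(\vv,\ww,\S,T)\,|\gamma(\ww)|\;\le\;|\gamma(\vv)|+\eE\Bigl[\sum_{k=1}^{N_t}|\nabla\gamma(e_k)|\,\Big|\,\S,T\Bigr],
\]
where $e_k$ denotes the $k$-th edge crossed. By Fubini it suffices to check that $\EE\sum_{k=1}^{N_t}|\nabla\gamma(e_k)|<\infty$. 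Re-expressing the jump sum as a time integral of the edge-traversal rate,
\[
\EE\sum_{k=1}^{N_t}|\nabla\gamma(e_k)|=\int_0^t\EE\,F(\tau_{B_s}\S)\,ds,\quad F(\sso):=\frac{1}{a(0,\sso)}\sum_{\ww\in\sso}a(0,\ww,\sso)\,|\nabla\gamma(0,\ww,\sso)|,
\]
I would apply the Mass Transport Principle (Lemma \ref{masstransport}) to the covariant field $\zeta(u,v,\sso)=p_s(u,v,\sso)\,F(\tau_v\sso)$, yielding $\EE F(\tau_{B_s}\S)=\E[F(\S)\,g_s(\S)]$ with $g_s(\sso):=\sum_v p_s(v,0,\sso)$. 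Reversibility of the walker's jump chain with respect to $\pi(v)=a(v,\sso)$ (from $P(u,v)=a(u,v)/a(u)$) gives $p_s(v,0)=\tfrac{a(0)}{a(v)}p_s(0,v)$; since $a(v)\ge 1$ and $\sum_v p_s(0,v)=1$ this forces $g_s\le a(0)$. Hence
\[
\EE\sum_{k=1}^{N_t}|\nabla\gamma(e_k)|\;\le\;t\,\E\sum_{\ww\in\S}a(0,\ww)|\nabla\gamma(0,\ww)|\;\le\;t\sqrt{2\,\E a(0)\cdot\C(|\nabla\gamma|^2)}
\]
by Cauchy-Schwarz, which is finite by assumption A3 and $\nabla\gamma\in\H$. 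This gives (i).

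For (ii), the graphical construction makes $\eta_t^\gamma$ piecewise-constant in $t$, with potential jumps only at epochs of the Poisson clocks $T_n$. At an epoch $\tau\in T_n$, the backward walker's one-step recursion spelled out in the text gives
\[
p_\tau(s_n,\ww)=\sum_{s'\in\S}\tfrac{a(s_n,s')}{a(s_n)}\,p_{\tau-}(s',\ww),\qquad p_\tau(\vv,\cdot)=p_{\tau-}(\vv,\cdot)\text{ for }\vv\neq s_n.
\]
Substituting into \eqref{eq:harness} yields $\eta_\tau^\gamma(s_n)=(M_{s_n}\eta_{\tau-}^\gamma)(s_n)$ and $\eta_\tau^\gamma(\vv)=\eta_{\tau-}^\gamma(\vv)$ for $\vv\neq s_n$, which is exactly the elementary jump of the generator \eqref{generator}. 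Independence and rate-$1$ of the Poisson clocks over $\S$ then identify the resulting Markov process with the one generated by $L_\S$ by a routine semigroup argument.

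The main obstacle is (i). The environment seen from the walker, started at the Palm point $0$, is \emph{not} Palm-stationary (its stationary law is Palm biased by $a(0)/\E a(0)$), so uniform control on $\EE F(\tau_{B_s}\S)$ does not come from direct stationarity. The pointwise bound $g_s\le a(0)$, obtained via the MTP combined with reversibility of the jump chain, is the clean workaround and reduces everything to the $\H$-integrability of $\nabla\gamma$ and the degree moment in A3. Once (i) is in hand, (ii) is essentially automatic from the graphical construction.
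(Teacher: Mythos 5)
Your proof is correct and rests on the same mechanism as the paper's, namely that the environment seen from the backward walker is degree-biased, which is controlled by combining the Mass Transport Principle with reversibility of the walk with respect to $\pi(v)=a(v)$. The difference is organizational. The paper delegates the estimate to Proposition \ref{prop:momentos-passeio}, whose proof conditions on the Poisson jump count $N(t)$, telescopes $\gamma$ along the discrete-time jumps, and invokes stationarity of the environment chain $\ss_n^\circ$ under the $a(0)$-biased Palm measure $\Q$ (Lemma \ref{meio_particula}); that stationarity is itself proved via the MTP. You instead work directly in continuous time with a Dynkin-type identity for the expected edge-traversal rate, apply the MTP to the kernel field $\zeta(u,v,\sso)=p_s(u,v,\sso)F(\tau_v\sso)$, and extract the clean pointwise bound $g_s(\sso)=\sum_v p_s(v,0,\sso)\le a(0,\sso)$ from detailed balance. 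This gives essentially the same bound ($\C(|\nabla\gamma|)$ versus your Cauchy--Schwarz estimate; these are equivalent up to constants since $\nabla\gamma\in\H$ and $\E a(0)<\infty$ by A3) without introducing the environment chain explicitly, which is a slightly more self-contained route. The one piece you treat as implicit, as does the paper, is the extension from well-definedness at the origin to well-definedness at every $\vv\in\S$: this uses the measurable point enumeration $(s_n)$ of B1--B3, for which $\tau_{s_n}\S\overset{d}{=}\S$, together with the translation relation $\eta_t^\gamma(\vv,\S,\tT)=\eta_t^\gamma(0,\tau_\vv(\S,\tT))+\gamma(\vv,\S)$; worth stating rather than leaving to the reader. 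The generator identification in part (ii) matches the paper's appeal to \eqref{generatorr} plus local finiteness.
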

\begin{proof} 
  To prove that the process is well defined we need to show that the sum on the
  right hand side of \eqref{eq:harness} is finite $\Pe$-a.s. Proposition
  \ref{prop:momentos-passeio} in the appendix shows that
\[
\EE|\eta_t^\gamma(0,\S,\tT)|\leq\EE\sum_{\ww\in
  \S}p_t(0,\ww,\S,\tT)|\gamma(\ww,\S)|\;\leq\; t\C(|\nabla\gamma|).
\]
This shows that the process is almost surely well defined at the
origin.  Using assumption A6b the result is extended to
all $\vv\in\S$.

The fact that $\eta_t^\gamma(\cdot,\S,\cdot)$ has generator given by
\eqref{generator}, follows from \eqref{generatorr} since $\S$ is locally finite
$\Pe$-a.s.
\end{proof}

We have constructed the process $\eta_t^\gamma$ as a deterministic function of
$\S$ and $\tT$, the point configuration plus the time epochs associated to the
points. That is, $\eta_t^\gamma$ is a random surface. Let
$(\S,\tT)=((\vv_n,\tT_n),\,n\ge 1)$ and $\tau_s (\S,\tT)=
((\vv_n-s,\tT_n),\,n\ge 1)$, for $\vv\in\S$.  Since
$p_t(\vv,\ww,(\S,\tT))=p_t(0,\ww-\vv,\tau_\vv(\S,\tT))$, $\gamma(0,\S)=0$ and
$\nabla\gamma$ is covariant,
\begin{align} 
  \eta_t^\gamma(\vv,(\S,\tT))
  &=\sum_{\ww\in \S}p_t(\vv,\ww,(\S,\tT))\,\gamma(\ww,\S)\nonumber\\
  &=\; \sum_{\ww\in\S}p_t(0,\ww-\vv,\tau_\vv(\S,\tT))\,\gamma(\ww-\vv,\tau_\vv\S)
  \,+\,\gamma(\vv,\S)\nonumber\\
  &=\;\sum_{\ww\in\tau_\vv{\S}}p_t(0,\ww,\tau_\vv(\S,\tT))\,
  \gamma(\ww,\tau_\vv\S)\,+\,\gamma(\vv,\S)
  \;=\;\eta_t^\gamma(0,\tau_\vv(\S,\tT))\,+\,\gamma(\vv,\S).\nonumber\label{eq:invariante}
\end{align}
If we call
\begin{equation}
  \label{h77}
  \psi_t(s,(\S,\tT)) := \eta_t^\gamma(0,\tau_s(\S,\tT)),
\end{equation}
then the process at time $t$ is the sum of the translation invariant surface $\psi_t$
and the initial condition $\gamma$. That is, 
\begin{equation}
  \label{h81}
  \eta_t = \psi_t + \gamma
\end{equation}
 In particular, it follows that $\nabla\eta_t^\gamma$
is a covariant (random) field $\PP$-a.s.

The dependence of $\eta_t^\gamma$ on $(\S,\tT)$ will be dropped from
the notation when clear from the context.

\paragraph{\bf Extension of the Hilbert space $\H$ to include the randomness
  coming from the process} We consider the probabilistic space where $\S,T,U$
are defined as independent processes and abuse notation by calling $\C$ the
Campbell measure on $\Xi_2$ associated to $\S,T,U$:
\[
\C(\nabla\eta_t^\gamma)\;:=\; \E E( \nabla\eta_t^\gamma)\;=\; \EE( \nabla\eta_t^\gamma)
\]
The following bound --shown in the Appendix-- implies that the process is well
defined as an element in $\H$ for all time.
\begin{Lemma}\label{momentosprocesso} If $\C(|\nabla\gamma|^r)<\infty$ then
\[
\C(|\nabla\psi_t^\gamma|^r)\;\le\;
2^r\C(|\nabla\gamma|^r)m^r(t)<\infty,
\]
where $m^r(t)$ denotes the $r-th$ moment of a Poisson random variable with mean $t$.
\end{Lemma}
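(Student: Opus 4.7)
The plan is to exploit the probabilistic representation built into the graphical construction. Setting $V^s := \gamma(B^s_{[t,0]},\S) - \gamma(s,\S)$ and combining \eqref{eq:harness} with $\psi_t = \eta_t^\gamma - \gamma$ gives $\psi_t(s,\S,\tT) = \eE V^s$. Since the backward walk $B^s$ jumps only between Voronoi neighbors, $V^s$ telescopes along the successive sites $s = X^s_0, X^s_1,\dots, X^s_{N^s} = B^s_{[t,0]}$ it visits (with $N^s$ the number of jumps in $[0,t]$):
\[
V^s \;=\; \sum_{k=0}^{N^s-1}\nabla\gamma(X^s_k, X^s_{k+1},\S).
\]
Covariance of $\nabla\gamma$ propagates to covariance of $V^s$ in $\S$, a property used twice below.

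I would first bound $\C(|\nabla\psi_t|^r)$ by a quantity involving only the walk from the origin. Combining Jensen with $(a+b)^r \le 2^{r-1}(a^r + b^r)$,
\[
|\psi_t(s',\S,\tT) - \psi_t(0,\S,\tT)|^r \;\le\; 2^{r-1}\bigl(\eE|V^{s'}|^r + \eE|V^0|^r\bigr).
\]
Multiplying by $a(0,s',\S)$, summing over $s'\in\S$ and integrating against $\Pe\times\pP$, the Mass Transport Principle (Lemma \ref{masstransport}) applied to the covariant field $a(s,s',\S)\eE|V^{s'}|^r$ collapses the two contributions into
\[
\C(|\nabla\psi_t|^r) \;\le\; 2^{r-1}\,\EE[a(0,\S)|V^0|^r].
\]

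The second step bounds $\EE[a(0,\S)|V^0|^r]$ by $2m^r(t)\,\C(|\nabla\gamma|^r)$. Jensen on the telescoping sum gives $|V^0|^r \le (N^0)^{r-1}\sum_{k=0}^{N^0-1}|\nabla\gamma(X^0_k,X^0_{k+1},\S)|^r$, and $N^0$ is Poisson($t$), independent of $\S$ and of the jump chain $(X^0_k)$ (cleanest after recasting the walk with an independent exponential clock for inter-jump times). It then suffices to prove, for every $k\ge 0$,
\[
E_k \;:=\; \EE\bigl[a(0,\S)\,|\nabla\gamma(X^0_k,X^0_{k+1},\S)|^r\bigr] \;=\; 2\,\C(|\nabla\gamma|^r).
\]
With $h(\S):=\sum_{s'}\tfrac{a(0,s',\S)}{a(0,\S)}|\nabla\gamma(0,s',\S)|^r$ (translation invariant), covariance gives $E_k = \EE[a(0,\S)h(\tau_{X^0_k}\S)]$; the Markov property reduces the claim to the one-step identity $\EE[a(0,\S)h(\tau_{X^0_1}\S)] = \E[a(0,\S)h(\S)]$, which is Lemma \ref{masstransport} applied to the covariant field $a(0,s',\S)h(\tau_{s'}\S)$, and induction in $k$ closes the argument. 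Summing over $k<n$, multiplying by $n^{r-1}$ and averaging over the law of $N^0$ yields $\EE[a(0,\S)|V^0|^r]\le 2m^r(t)\C(|\nabla\gamma|^r)$; the announced $2^r = 2^{r-1}\cdot 2$ follows.

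The main subtlety is the identity $E_k = E_0$: morally it expresses the stationarity of the environment seen from the walker under the degree-weighted measure $a(0,\S)d\Pe$, a fact rooted in the reversibility of the Delaunay random walk with respect to $a(\cdot,\S)$. Apart from this stationarity, the argument is careful bookkeeping of Jensen's inequality and Lemma \ref{masstransport}, with covariance of $\nabla\gamma$ supplying the translation invariance needed throughout.
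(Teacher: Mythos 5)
Your proof is correct and follows essentially the same route as the paper's: telescope $\gamma$ along the backward walk, bound $|\nabla\psi_t(0,s')|^r$ by $2^{r-1}$ times the two endpoint contributions and use the Mass Transport Principle to reduce to the origin, apply Jensen over the Poisson number of jumps, and use stationarity of the environment under the degree-biased measure $a(0)\,d\Pe$ to show each jump contributes $2\C(|\nabla\gamma|^r)$. The only presentational difference is that the paper factors the second half of the argument out into Proposition \ref{prop:momentos-passeio} and Lemma \ref{meio_particula} in the appendix and simply cites them, whereas you re-derive the degree-biased stationarity inline via MTP and induction; the substance is identical.
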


As a consequence of \eqref{h81} and Lemma \ref{momentosprocesso} the tilt is
invariant under the dynamics:

\begin{Proposition}\label{tilt2}
  For all unitary $u\in\R^d$ and covariant surface $\gamma$ with
  $\nabla\gamma\in\H$, \[\cJ_u(\eta_t^\gamma)=\cJ_u(\gamma).\] for all $t\geq 0$.
\end{Proposition}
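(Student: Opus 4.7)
The plan is to exploit the additive decomposition $\eta_t^\gamma = \psi_t + \gamma$ from \eqref{h81}, in which $\psi_t$ is translation invariant while the initial surface $\gamma$ carries the tilt. Taking gradients and using the linearity of the $\H$-inner product,
\[
\cJ_u(\eta_t^\gamma) \;=\; \C(\nabla\psi_t \cdot \omega_u) + \C(\nabla\gamma \cdot \omega_u) \;=\; \cJ_u(\psi_t) + \cJ_u(\gamma),
\]
so the proposition reduces to showing that $\cJ_u(\psi_t) = 0$ for every $t \ge 0$.

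Note that $\omega_u$ is a flux (antisymmetry follows from the $\sign(s\cdot u)$ factor) and sits in $\H$ by A4. Granted the moment bound $\E E[a(0)\psi_t^2(0)]<\infty$, discussed below, the integration-by-parts formula (Lemma \ref{intporpartes}) gives
\[
\C(\nabla\psi_t \cdot \omega_u) \;=\; -\C(\psi_t \cdot \Div\omega_u),
\]
so it suffices to prove that $\Div\omega_u(0,\sso) = 0$ for $\Pe$-a.e.\ configuration.

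This is the geometric core of the argument. For each neighbour $s$ of $0$ let $f=\bb(0,s,\sso)$ be the shared Voronoi face and $n_f = s/|s|$ its outward unit normal from $\Vor(0)$. Since $\ell_{d-1}(\bb_u(0,s,\sso)) = |n_f\cdot u|\,\ell_{d-1}(f)$ and $\sign(s\cdot u) = \sign(n_f\cdot u)$, one has
\[
\omega_u(0,s,\sso) \;=\; (n_f\cdot u)\,\ell_{d-1}(f).
\]
Assumption A4 together with the convexity of Voronoi cells ensures that $\Vor(0)$ is $\Pe$-a.s.\ a bounded polytope, and applying the divergence theorem to the constant vector field $u$ on $\Vor(0)$ yields
\[
\Div\omega_u(0,\sso) \;=\; \sum_{f \subset \partial\Vor(0)} (n_f\cdot u)\,\ell_{d-1}(f) \;=\; \int_{\partial\Vor(0)} n\cdot u\,d\sigma \;=\; \int_{\Vor(0)} \Div u\,dx \;=\; 0.
\]
This gives $\cJ_u(\psi_t) = 0$ and hence the proposition.

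The main technical obstacle is the moment bound $\E E[a(0)\psi_t^2(0)]<\infty$ needed to invoke Lemma \ref{intporpartes}. I would derive it from the random walk representation $\psi_t(0) = \eta_t^\gamma(0) = \eE[\gamma(\rw)\mid \S,\tT]$, using Jensen's inequality to bring the square inside the conditional expectation, expressing $\gamma(\rw)$ as a telescoping sum of $\nabla\gamma$ along the backward path $\rw$, and controlling the path length via Poisson moments. The resulting estimate is exactly in the spirit of Lemma \ref{momentosprocesso}; H\"older's inequality together with A3 absorbs the $a(0)$ factor, while the hypothesis $\C(|\nabla\gamma|^r)<\infty$ with $r>4$ (in particular $r\ge 2$) supplies the needed integrability.
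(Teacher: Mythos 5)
Your proof is correct and follows essentially the same route as the paper: the same decomposition $\eta_t^\gamma = \psi_t + \gamma$, the same reduction via integration by parts against $\omega_u$ (with the moment bound supplied by Lemma~\ref{momentosprocesso}), and the same key fact $\Div\omega_u = 0$. Your divergence-theorem derivation of that last fact is a clean rephrasing of the paper's observation that the positively- and negatively-signed halves of the sum each equal the $(d-1)$-measure of the projection of $\Vor(0)$ onto the hyperplane $O_u$.
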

\begin{proof} 
  First observe that with $\Pe$-probability one we have,
\[
\Div \, {\omega_u}(0,\S) =\sum_{\vv\in \S} \omega_u(0,\vv,\S)
=\frac12\sum_{\substack{\vv\in 
    \S\\ (\vv \cdot
    \uu)>0}}\ell_{d-1}(b_u(0,\vv,\S))-\frac12\sum_{\substack{\vv\in \S\\ 
    (\vv \cdot \uu)<0}}\ell_{d-1}(b_u(0,\vv,\S))\;=\; 0,
\]
because each term in the substraction is the $(d-1)$-dimensional
Lebesgue measure of the projection of the Voronoi cell of the origin
over the hyperplane orthogonal to $u$. Then, 
\begin{align*}
  \cJ_u(\eta_t^\gamma)&=\;\C(\nabla \eta_t^\gamma\cdot\omega_u)
  \;=\;\C(\nabla\gamma\cdot\omega_u)+\C(\nabla\psi_t\cdot\omega_u)\\
  &=\C(\nabla\gamma\cdot\omega_u)-\C(\psi_t\cdot\Div{\omega_u})\;
  =\;\C(\nabla\gamma\cdot\omega_u) \;=\;\cJ_u(\gamma).
\end{align*}
where we used \eqref{h81} in the second identity, the integration-by-parts Lemma
\ref{intporpartes} in the third identity as $\psi_t^\gamma$ is a translation
invariant surface and $\Div{\omega_u}=0$ in the fourth identity.
\end{proof}

\section{The process converges to a harmonic surface}
\label{lemmas}
In this section we show that if $\gamma$ is a surface with tilt
$\cI(\gamma)$, whose gradient is in $\H$ and has more than 4 moments, then
there exists a surface $h$ with $\nabla h\in\H$ such that
$\nabla\eta_t^\gamma$ converges strongly in $\H$ to $\nabla
h$. Furthermore $h$ is harmonic and has the same tilt as
$\gamma$. We split the proof into several lemmas.
\begin{Lemma}\label{campoemh}
If $\C(|\nabla\gamma|^r)<\infty$ for some $r>4$, then for all $t>0$
\begin{equation}
  \label{dd11}
 \frac{d}{dt}\C({|\nabla\eta_t^{\gamma}|}^2)\;=\;-2\EE\left[
   a(0)^{-1}\left|\Delta\eta_t^{\gamma}(0,\Sm,T)\right|^2\right].  
\end{equation}
\end{Lemma}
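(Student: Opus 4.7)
The plan is to compute the derivative directly through the Markov generator of the harness process. I will work with the localized Dirichlet functional
\[
G(\eta,\S)\;:=\;\sum_{\vv\in\S} a(0,\vv,\S)\,(\eta(\vv)-\eta(0))^2,
\]
so that $\C(|\nabla\eta_t^\gamma|^2)=\EE[G(\eta_t,\S)]$. Since the harness process is pure-jump Markov with generator $L_\S f(\eta)=\sum_{s\in\S}[f(M_s\eta)-f(\eta)]$, the forward equation gives $\frac{d}{dt}\EE[G(\eta_t)]=\EE[L_\S G(\eta_t)]$, and everything reduces to identifying $L_\S G$.

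The payoff of localizing $G$ at the origin is that $G$ depends on $\eta$ only at $0$ and at its Voronoi neighbors, so only updates at $s=0$ or at a neighbor $s=\vv$ of $0$ contribute to $L_\S G$. For the update at $0$, using $\bar\eta(0)-\eta(0)=\Delta\eta(0)/a(0)$ and the telescoping identity $\sum_{\vv} a(0,\vv)(\eta(\vv)-\eta(0))=\Delta\eta(0)$, a direct expansion gives $G(M_0\eta)-G(\eta)=-\Delta\eta(0)^2/a(0)$. For an update at a neighbor $\vv$ of $0$, only the single term $a(0,\vv)(\eta(\vv)-\eta(0))^2$ depends on $\eta(\vv)$, and using $\bar\eta(\vv)-\eta(\vv)=\Delta\eta(\vv)/a(\vv)$ the change equals
\[
2\,a(0,\vv)\frac{\Delta\eta(\vv)}{a(\vv)}(\eta(\vv)-\eta(0))\;+\;a(0,\vv)\frac{\Delta\eta(\vv)^2}{a(\vv)^2}.
\]

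The decisive step is to rewrite the two neighbor-indexed sums via the Mass Transport Principle (Lemma~\ref{masstransport}). The auxiliary fields $F_1(s,s',\S):=a(s,s',\S)\frac{\Delta\eta_t(s')}{a(s')}(\eta_t(s')-\eta_t(s))$ and $F_2(s,s',\S):=a(s,s',\S)\frac{\Delta\eta_t(s')^2}{a(s')^2}$ are covariant, because $\nabla\eta_t$ is covariant (via the decomposition $\eta_t=\gamma+\psi_t$ with $\psi_t$ translation invariant) and $\Delta\eta_t/a$ is a translation-invariant surface. Swapping $0\leftrightarrow\vv$ by Mass Transport collapses the sums to expressions in $\Delta\eta_t(0)/a(0)$ alone: the cross sum becomes $-2\EE[\Delta\eta_t(0)^2/a(0)]$ and the quadratic sum becomes $+\EE[\Delta\eta_t(0)^2/a(0)]$. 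Adding the three contributions yields $\EE[L_\S G(\eta_t)]=-2\EE[a(0)^{-1}\Delta\eta_t(0)^2]$, which is the claim.

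The main obstacle will be analytic rather than algebraic: I need to validate the forward identity $\frac{d}{dt}\EE G(\eta_t)=\EE L_\S G(\eta_t)$ in this infinite-volume setting where $G$ is not in an elementary compact-support core for $L_\S$, and to verify absolute integrability to apply Mass Transport to $F_1$ and $F_2$. Both reduce to $L^2$-type estimates on $\nabla\eta_t$ and $\Delta\eta_t/a$, which follow from Lemma~\ref{momentosprocesso} (using $\C(|\nabla\gamma|^r)<\infty$ for some $r>4$), Cauchy--Schwarz, and the exponential-moment bound A3 on the degree $a(0)$; the hypothesis $r>4$ provides exactly the headroom needed to control the product of a gradient with a Laplacian-over-degree factor.
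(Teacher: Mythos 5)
Your computation of $L_\S G$ (one jump at the origin, one jump at a neighbor of the origin) and the subsequent Mass Transport step are correct and reproduce, in slightly different packaging, the paper's own argument: the paper organizes the cross term through the integration-by-parts Lemma~\ref{intporpartes}, which is itself a consequence of Mass Transport, so the two computations are algebraically equivalent. Two remarks. First, the normalization should be $\C(|\nabla\eta_t^\gamma|^2)=\tfrac12\,\EE[G(\eta_t)]$ rather than $\EE[G(\eta_t)]$, by the definition~\eqref{norm.H}. Second, your explanation of where $r>4$ enters is not quite right: it is not used to control a product of a gradient with a Laplacian-over-degree factor. It is used to make the forward identity $\frac{d}{dt}\EE[G(\eta_t)]=\EE[L_\S G(\eta_t)]$ rigorous, specifically to kill the contribution, in the $h\to 0$ difference quotient, of the event that two or more Poisson marks fall in $[t,t+h]$ among the second neighbors of the origin. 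The paper bounds that contribution by H\"older with conjugate exponents $(p,q)$ chosen so that $2p\le r$ (to invoke Lemma~\ref{momentosprocesso}) and $q<2$ (so the resulting power $h^{2/q-1}$ vanishes); these two constraints are simultaneously satisfiable exactly when $r>4$. So the step you correctly flag as the main obstacle is indeed the crux of the proof, but the hypothesis $r>4$ is calibrated to the double-jump event rather than to the integrability you describe.
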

\begin{proof}
  We drop the dependence on the initial condition $\gamma$, $\Sm$ and
  $T$ and write $\eta_t=\eta_t^\gamma(\cdot, \Sm, T)$. Let $
  \T_2=\bigcup_{\vv_n\in V_2}\tT_n, $ the epochs corresponding to
  sites in $V_2$, the set of second neighbors of the origin. Define
  the events
\begin{eqnarray*}
&F_1\;:=\;F_1(t,h)\;=\;\{|\T_2\cap[t,t+h]|=1\};\\
&F_{1,\vv}\;:=\;F_{1,\vv}(t,t+h)\;=\;F_1\cap\{|\T(\vv)\cap[t,t+h]|=1\}\cap\{\vv\in V_2\};\\
&F_2\;:=\;F_2(t,h)\;=\;\{|\T_2\cap[t,t+h]|\geq2\}.
\end{eqnarray*}
 Given $S$, $\T_2$ is a Poisson process with intensity $|V_2|$, hence
\begin{align}
&\PP(F_1|\S)\;=\;\EE[\one_{F_1}|\S]\;=\;|V_2|he^{-|V_2|h},\label{eq:f1}\\
&\PP({F_{1,\vv}}|\S)\;=\;he^{-|V_2|h}\one_{V_2}(\vv),\label{eq:f1v}\\
&\PP(F_2|\S) \;\leq\; h^2|V_2|^2. \label{eq:f2}
\end{align}
We have to compute
\begin{equation}
\EE \sum_{\vv\in\S}a(0,\vv)(|\nabla\eta_{t+h}(0,\vv)|^2-|\nabla\eta_t(0,\vv)|^2)(\one_{F_1} + \one_{F_2}) \;=\; I + II \label{eq:deriv_partes}
\end{equation}
We use
\[
|\nabla\eta_{t+h}(0,\vv)|^2-|\nabla\eta_t(0,\vv)|^2
\;=\;[\nabla\eta_{t+h}(0,\vv)-\nabla\eta_t(0,\vv)]^2
+2\nabla\eta_t(0,\vv)[\nabla\eta_{t+h}(0,\vv)-\nabla\eta_t(0,\vv)],
\]
\[
\Delta^\star\eta(\vv) \;:=\;\frac{1}{|a(\vv)|}\sum_{\ww\in \S}a(\vv,\ww)
(\eta(\ww)-\eta(\vv)) \;=\;M_\vv\eta(\vv)-\eta(\vv)
\]
to compute each term in \eqref{eq:deriv_partes}. Assume $F_1$ occurs.

\begin{itemize}
\item If the mark is neither at the origin nor at a neighbor of it,
  then $a(0,\vv)=0$, $\nabla\eta_{t+h}(0,\vv)=\nabla\eta_t(0,\vv)$,
  and the difference is zero.
\item If the mark is at the origin and $a(0,\vv)=1$,
\begin{eqnarray}
|\nabla\eta_{t+h}(0,\vv)|^2-|\nabla\eta_t(0,\vv)|^2
&=&[-M_0\eta_t(0)+\eta_t(0)]^2+2\nabla\eta_t(0,\vv)[-M_0\eta_t(0)+\eta_t(0)]
 \nonumber\\  
&=&-2\nabla\eta_t(0,\vv)\Delta^\star\eta_t(0)+|\Delta^\star\eta_t(0)|^2.
\label{eq:deriv1}
\end{eqnarray}
\item If the mark is at some $\vv$ such that $a(0,\vv)=1$, we have
  $\nabla\eta_{t+h}(0,\ww)=\nabla\eta_t(0,\ww)$, for all $\ww\neq \vv$. So
\begin{eqnarray}
|\nabla\eta_{t+h}(0,\vv)|^2-|\nabla\eta_t(0,\vv)|^2
&=&[M_\vv\eta_t(\vv)-\eta_t(\vv)]^2+2\nabla\eta_t(0,\vv)[M_\vv\eta_t(\vv)-\eta_t(\vv)]
\nonumber\\
&=&2\nabla\eta_t(0,\vv)\Delta^\star\eta_t(\vv)+|\Delta^\star\eta_t(\vv)|^2
\label{eq:deriv2}.
\end{eqnarray}
\end{itemize}
Given $\S$, the process $\T_2\cap[t,t+h]$ is independent of $\eta_t$, so
conditioning on $\S$ by \eqref{eq:f1}, \eqref{eq:f1v}, \eqref{eq:deriv1} and
\eqref{eq:deriv2}, we get that the first term in \eqref{eq:deriv_partes} equals
\begin{align*}
  h \EE\Bigl( \e^{-|V_2|h}\sum_{\vv\in
    \S}a(0,\vv)(2\nabla\eta_t(0,\vv)\nabla\Delta^\star\eta_t(0,\vv)
  +|\Delta^\star\eta_t(\vv)|^2+|\Delta^\star\eta_t(0)|^2)\Bigr) .
\end{align*}
By monotone convergence,
\begin{align}
  &\lim_{h\to 0}\frac{1}{2h} \EE\Bigl( \sum_{\vv\in \S}a(0,\vv)(|\nabla\eta_{t+h}(0,\vv)|^2-|\nabla\eta_t(0,\vv)|^2)\one_{F_1}\Bigr)\nonumber\\
  &\hspace{2cm} =\;\EE \Bigl(\sum_{\vv \in
    \S}a(0,s)\nabla\eta_t(0,s)\nabla\Delta^\star\eta_t(0,s)\Bigr) +\frac12
  \EE\Bigl(\sum_{\vv\in \S}a(0,\vv)(|\Delta^\star\eta_t(\vv)|^2
  +|\Delta^\star\eta_t(0)|^2)\Bigr)\nonumber\\
  &\hspace{2cm}=\; \EE \Bigl(\sum_{\vv \in
    \S}a(0,s)\nabla\eta_t(0,s)\nabla\Delta^\star\eta_t(0,s)\Bigr) + \EE\Bigl(
  a(0)|\Delta^\star\eta_t|^2\Bigr),\label{eq:derivparte1}
\end{align}
by the Mass Transport Principle \eqref{mtp1}.  Let $1/p+1/q=1$ and
$\zeta(0,\vv):=\one_{F_2}$, then for any time $t'$, by means of \eqref{h81} and
Lemma \ref{momentosprocesso}, the second term in \eqref{eq:deriv_partes} reads
\begin{eqnarray}
  \frac1h\EE \Bigl(\sum_{\vv\in \S}|\nabla\eta_{t'}(0,\vv)|^2\one_{F_2}\Bigr)
  &=&\frac{2}{h}\C(|\nabla\eta_{t'}|^2\zeta)\nonumber \\
  &\le&\frac{2}{h}\C(|\nabla\eta_{t'}|^{2p})^{1/p}\C(\zeta^q)^{1/q}\nonumber \\
  &=&
  \frac1h\Bigl[\EE\Bigl(\sum_{\vv\in\S}|\nabla\eta_{t'}(0,\vv)|^{2p}\Bigr)\Bigr]^{1/p}
  \Bigl[\EE a(0)\one_{F_2}\Bigr]^{1/q}
  \nonumber \\
  &\leq& (A m^{2p}(t')+B)\Bigl[\E|V_2|^3\Bigr]^{1/q}h^{2/q-1},\nonumber 
\end{eqnarray}
for constants $A,B>0$, where $m^r(t)$ is the $r$-th moment of a Poisson random
variable with mean~$t$. Choosing $q<2$ and applying this bound for $t'=t$ and
$t'=t+h$ we get
\begin{equation}\label{eq:derivparte2}
  \lim_{h\to 0}\frac{1}{2h}\EE\Bigl(\sum_{\vv\in \S}a(0,\vv)(|\nabla\eta_{t+h}(0,\vv)|^2-|\nabla\eta_t(0,\vv)|^2)\one_{F_2}\Bigr)= 0.
\end{equation}
From \eqref{eq:deriv_partes}, \eqref{eq:derivparte1},
\eqref{eq:derivparte2} and the integration by parts formula we obtain
\[
\frac{d}{dt}\C(|\nabla\eta_t|^2)\;=\;2\C(\nabla\eta_t\nabla\Delta^\star\eta_t)+\EE[ a(0)|\Delta^\star\eta_t|^2]
\;=\;-\EE[ a(0)|\Delta^\star\eta_t|^2].\qedhere
\]
\end{proof}

\begin{cor}\label{exist}
  If $\gamma$ satisfies the hypotheses in Lemma \ref{campoemh}, then

  (a) $\C(|\nabla\eta_t^\gamma|^2)$ is non-increasing in~$t$; 

(b) $\C(|\nabla\eta_t^\gamma|^2)$  is strictly decreasing at time $t$ if and only if
  $\eta_{t}^\gamma$ is not harmonic for $(a,\S)$; 

  (c) $\lim_{t\to\infty} a(0)^{-1}\Delta\eta_t^\gamma(0)\;=\; 0$,
  $\PP$-a.s.\/ and in $L_2(P)$, $\Pe$-a.s.;

  (d) $\lim_{t\to\infty}\Delta\eta_t^\gamma \;=\; 0$ $\PP$-a.s.
\end{cor}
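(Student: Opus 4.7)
Parts (a) and (b) are immediate consequences of Lemma \ref{campoemh}. The right-hand side of \eqref{dd11} is manifestly non-positive, giving (a); it vanishes at time $t$ iff $\Delta\eta_t^\gamma(0,\S,T)=0$ $\PP$-a.s., which by the covariance of $\nabla\eta_t^\gamma$ (established in Section \ref{construction}) and the shift invariance of the joint law of $(\S,T)$ under $\tau_s$ for $s\in\S$ (B1-B3) upgrades to $\Delta\eta_t^\gamma(s,\S,T)=0$ for every $s\in\S$ $\PP$-a.s., i.e., harmonicity of $\eta_t^\gamma$ on $(a,\S)$.

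For part (c), by (a) the non-negative function $\C(|\nabla\eta_t^\gamma|^2)$ has a finite limit as $t\to\infty$, so integrating \eqref{dd11} over $[0,\infty)$ gives
\begin{equation*}
\int_0^\infty \EE\bigl[a(0)^{-1}|\Delta\eta_t^\gamma(0)|^2\bigr]\,dt \;=\; \tfrac{1}{2}\Bigl(\C(|\nabla\gamma|^2)-\lim_{t\to\infty}\C(|\nabla\eta_t^\gamma|^2)\Bigr)\;<\;\infty.
\end{equation*}
To promote integrability to convergence of $D(t):=\EE[a(0)^{-1}|\Delta\eta_t^\gamma(0)|^2]$ to $0$, the plan is to show that $D(t)$ itself is non-increasing in $t$. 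I would derive this from the convexity of $t\mapsto \C(|\nabla\eta_t^\gamma|^2)$: the harness generator $L_\S\eta(s)=\Delta\eta(s)/a(s)$ is self-adjoint and non-positive in the weighted space $L_2(\S,a)$, so by the spectral theorem $\C(|\nabla\eta_t^\gamma|^2)$ is convex in $t$, its derivative $-2D(t)$ is non-decreasing, and hence $D(t)\searrow 0$. Fubini's theorem then yields $\Pe$-a.s.\ integrability of the $P$-conditional Dirichlet form, and the same spectral argument applied at fixed $\S$ promotes this to $L_2(P)$ convergence to $0$ for $\Pe$-a.s.\ $\S$. For the $\PP$-a.s.\ statement, the key observation is that $a(0)^{-1}\Delta\eta_t^\gamma(0)$ is \emph{reset to zero} at every Poisson epoch of the clock attached to the origin (when $\eta_t(0)$ jumps to $M_0\eta_{t-}(0)$ while its neighbors are unchanged), and I would control the fluctuations between consecutive such epochs using the $L_2$ bounds already established.

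Part (d) is immediate from (c): $a(0)$ is $\PP$-a.s.\ finite and $t$-independent, so $\Delta\eta_t^\gamma(0)=a(0)\cdot a(0)^{-1}\Delta\eta_t^\gamma(0)\to 0$ $\PP$-a.s., and for arbitrary $s\in\S$ the conclusion transports by covariance of $\nabla\eta_t^\gamma$ together with B1-B3. The main obstacle I foresee is the passage from $L_2$ to $\PP$-a.s.\ convergence in (c): the spectral-convexity argument delivers the second-moment decay transparently, but the pathwise statement requires the extra reset-at-origin mechanism combined with quantitative control of the piecewise-constant trajectory of $\Delta\eta_t^\gamma(0)$ between consecutive Poisson epochs.
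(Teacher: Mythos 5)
Parts (a), (b), and (d) are fine, but your proposed route to (c) has a genuine gap. The spectral-convexity argument conflates the \emph{deterministic} heat semigroup $e^{t\mathcal A}\gamma$ (where $\mathcal A = a(\cdot)^{-1}\Delta$ is indeed self-adjoint and non-positive on $L^2(\S,a)$) with the \emph{random} harness process $\eta_t^\gamma(\cdot,\S,T)$, which is a pure-jump process driven by the Poisson clocks $T$. The quantity $\C(|\nabla\eta_t^\gamma|^2)=\EE|\nabla\eta_t^\gamma|^2$ averages over $T$ as well, so it is not $\C(|\nabla (e^{t\mathcal A}\gamma)|^2)$; by Jensen it is strictly larger unless $\eta_t$ is already deterministic. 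Consequently, the spectral theorem does not give you convexity of $\C(|\nabla\eta_t^\gamma|^2)$, nor monotonicity of $D(t)=\EE[a(0)^{-1}|\Delta\eta_t^\gamma(0)|^2]$, and a fortiori not the pointwise or $L_2(P)$ limits conditional on $\S$. Your ``reset-to-zero'' observation is real, but it only tells you what happens at the origin's clock; you still have to control what happens when a neighbor's clock rings, and you do not provide that control.

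What actually works, and is essentially the paper's argument: after noting $\int_0^\infty \EE[Z_t]\,dt<\infty$ with $Z_t:=a(0)^{-1}|\Delta\eta_t^\gamma(0)|^2$, observe that $Z_t$ is \emph{piecewise constant} in $t$, jumping only at epochs of the Poisson clocks attached to $0$ and its $a(0)$ neighbors; given $\S$ these epochs form a Poisson process $0=t_0<t_1<\dots$ of rate $a(0)+1$, and $t_{k+1}-t_k$ is independent of $Z_{t_k}$. Hence
\begin{equation*}
\int_0^\infty \EE[Z_t]\,dt \;=\; \sum_{k\ge 0}\EE\bigl[Z_{t_k}(t_{k+1}-t_k)\bigr]\;=\;\sum_{k\ge 0}\EE\Bigl[\frac{Z_{t_k}}{a(0)+1}\Bigr]\;<\;\infty,
\end{equation*}
so $\sum_k Z_{t_k}<\infty$ $\PP$-a.s.\ (since $a(0)<\infty$ a.s.), which, together with the piecewise-constancy of $Z_t$, yields $Z_t\to 0$ $\PP$-a.s. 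The $L_2(P)$ statement for $\Pe$-a.s.\ $\S$ then follows from dominated convergence, using the domination $Z_t\le \sum_k Z_{t_k}$ (a single fixed $P$-integrable random variable given $\S$). This bypasses any need for monotonicity of $D(t)$. Your part (d), transporting the origin statement to every $s\in\S$ via covariance and B1--B3, is correct and matches the paper's intent.
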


\begin{proof}
Let 
\[
Z_t:=\frac{|\Delta\eta_t^\gamma(0)|^2}{ a(0)}=
a(0)|\Delta^\star\eta_t^\gamma(0)|^2.\] Lemma \ref{campoemh} implies
$\int_0^\infty \EE[Z_t]dt<\infty$. Fix $t_0=0$ and denote $0=t_0<t_1<t_2<\dots$
the ordered epochs of the superposition of the Poisson processes associated to
the point at the origin and its neighbors. This is a Poisson process with
intensity $a(0)+1$. For each $n\geq 0$, given $\S$, $Z_{t_n}$ is independent of
$(t_{n+1}-t_n)$. Hence,
\begin{align*}
\int_0^\infty \EE Z_tdt&=\EE\int_0^\infty Z_tdt=\sum_{k=0}^\infty \EE [Z_{t_k}(t_{k+1}-t_k)]
=\sum_{k=0}^\infty \EE\Bigl(\frac{Z_{t_k}}{ a(0)+1}\Bigr) < \infty.
\end{align*}
Hence,
\[
\sum_{k=0}^\infty \Delta\eta_{t_k}^\gamma(0)<\infty\quad\mbox{ and }
\lim_{t\to\infty} \Delta\eta_t^\gamma(0)=0\quad\PP\mbox{-a.s.}
\]
The $L_2(P)$ convergence follows by dominate convergence using that
$\Delta\eta_{t}^\gamma(0)\leq\sum_{k=0}^\infty
\Delta\eta_{t_k}^\gamma(0)$.
\end{proof}

\begin{proof}[{\bf Proof of (a) and (b) of Theorem \ref{t1}.}]
  
  In the notation we drop the dependence on the initial surface $\gamma$.
We want to prove the existence of a harmonic
  surface $h$, with covariant $\nabla h$ and such that for all $n\in\Z$
\[
\lim_{t\to\infty}\EE a(0,\vv_n)[\nabla\eta_t(0,\vv_n)-\nabla
h(0,\vv_n)]^2=0.
\]
where $(s_n, n\in\Z)$ is the enumeration of $\S$ given in B1-B3.
 
Observe that
\begin{align*}
  \EE|a(0,\vv_n)(\nabla\eta_t(0,\vv_n)-\nabla
  h(0,\vv_n))|^2\;\leq\;\EE\hspace{-2.1pt}\sum_{\vv\in
    \S}a(0,s)|\nabla\eta_t(0,\vv)-\nabla h(0,\vv)|^2\; =\;2\C(|\nabla\eta_t-\nabla h|^2).
\end{align*}
So, it is enough to show that $\nabla\eta_t \to \nabla h$ strongly in
$\H$.  

\emph{Existence of the limit .} Since by Corollary \ref{exist},
$\C(|\nabla\eta_t|^2)$ is bounded, $\nabla\eta_t$ is weakly compact, and hence
for every sequence $\{t_k\}_{k\geq 0}$, there exists a subsequence
$\{t_{k_j}\}_{j\geq 0}$ and a field $\zeta_\infty\in \H$ such that
\begin{equation}\label{limitefraco}
\lim_{j\to\infty}\C(\nabla\eta_{t_{k_j}}\cdot\zeta)=\C(\zeta_\infty\cdot\zeta),\quad \mbox{for all } \zeta\in \H.
\end{equation}

\emph{Uniqueness of the limit.} Let $\{t_k\}_{k\geq 0}$ be a subsequence such
that
$\nabla\eta_{t_{k}} \rightharpoonup\zeta_\infty$.

By \eqref{h81},
\begin{align}
\label{h78}
  \C(|\zeta_\infty|^2)
  &\;=\;\lim_{k\to\infty}\C(\nabla\eta_{t_k}\cdot\zeta_\infty)
  \;=\;\C(\nabla\gamma\cdot\zeta_\infty)
  +\lim_{k\to\infty}\C(\nabla\psi_{t_k}\cdot\zeta_\infty).
\end{align}
where $\psi_t$ is defined in \eqref{h77}.  Integrating by parts and using Hölder,
\begin{align}
\label{h79}
  &|\C(\nabla\psi_{t_k}\cdot\zeta_\infty)|
  \;=\;\lim_{j\to\infty}|\C(\nabla\psi_{t_k}\cdot\nabla\eta_{t_j})|
  \;=\;\lim_{j\to\infty}|\C(\psi_{t_k}\cdot\Delta\eta_{t_j})|\nonumber\\
  &\qquad\;\leq\;\lim_{j\to\infty}\EE(a(0)|\psi_{t_k}|^2)^{1/2}\EE(a(0)^{-1}|\Delta\eta_{t_j}|^2)^{1/2}\;=\;0,
\end{align}
by Corollary \ref{exist}. Therefore,
\begin{equation}\label{normalimite}
\C(|\zeta_\infty|^2)=\C(\nabla\gamma\cdot\zeta_\infty).
\end{equation}
Let $\nabla \eta_{t_k} \rightharpoonup \zeta_\infty$ and $\nabla \eta_{t_j}
\rightharpoonup \zeta'_\infty$ subsequences converging to two weak limits
$\zeta_\infty$ and $\zeta'_\infty$. By
\eqref{limitefraco} and \eqref{h78},
\begin{align}
  &\C(\zeta_\infty\cdot\zeta'_\infty)
  \;=\;\lim_{k\to\infty}\C(\nabla\eta_{t_k}\cdot\zeta'_\infty)
  \;=\;\C(\nabla\gamma\cdot\zeta'_\infty)
  +\lim_{k\to\infty}\C(\nabla\psi_{t_k}\cdot\zeta'_\infty)
  \;=\;\C(|\zeta'_\infty|^2),\label{prodinterno}
\end{align}
by \eqref{h79} and \eqref{normalimite}. The same holds for $\zeta_\infty$ and
so
$\C(|\zeta'_\infty|^2)=\C(|\zeta_\infty|^2)=\C(\zeta_\infty\cdot\zeta'_\infty).$
This implies $\C(|\zeta_\infty-\zeta'_\infty|^2)=0,$ i.e. there is a unique
limit point.

\emph{Strong convergence.} By \eqref{h81} and integration by parts,
\begin{equation}
  \C(|\nabla\eta_{t}|^2)
  \;=\;\C(\nabla\gamma\nabla\eta_{t})
  +\C(\nabla\psi_{t}\nabla\eta_{t})
  \;=\;\C(\nabla\gamma\nabla\eta_{t})
  -\C(\psi_t\Delta\eta_{t}).\label{limitenorma1}
\end{equation}
From H\"older's inequality,
\begin{equation}
\label{j1}
  (\C(\psi_{t}\Delta\eta_{t}))^2
  \;\leq\;\EE( a(0)|\psi_{t}(0)|^2)\EE\Bigl(\frac{|\Delta\eta_{t}(0)|^2}{ a(0)}\Bigr).
\end{equation}
Since by Lemma \ref{campoemh} $\EE\frac{|\Delta\eta_{t}|^2}{ a(0)}$ is
integrable, there exists a subsequence $(t_k)_{k\geq 0}$ such that
\[
\lim_{k\to\infty}t_k\EE\Bigl(\frac{|\Delta\eta_{t_k}(0)|^2}{ a(0)}\Bigr)\;=\;0.
\]
From Lemma \ref{lema:seg_mom_disc} in the appendix,
\begin{equation}\label{limitenorma2} 
  \lim_{k\to\infty}\frac{\E E|\gamma(B^0_{[t_k,0]})|^2}{t_k}t_k
  \EE\frac{|\Delta\eta_{t_k}(0)|^2}{ a(0)}\;=\;0.
\end{equation}
Using \eqref{normalimite}, \eqref{limitenorma1} and
\eqref{limitenorma2}, 
$C(|\nabla\eta_{t}|^2)\to\C(|\zeta_\infty|^2),$ and hence
$\nabla\eta_{t}$ converges strongly in $\H$ to $\zeta_\infty$.

\emph{Zero divergence.} By Jensen's inequality and using $a(0) \ge 2$, we get
\begin{align*}
\lim_{t\to\infty}\EE(a(0)^{-2}|\Delta\eta_t-\Div{\zeta_\infty}|^2)
& \leq  \lim_{t\to\infty}\EE(a(0)^{-1}\sum_{s\in \S}
a(0,s)(\nabla\eta_t(0,s)-\zeta_\infty(0,s))^2 )\\
  & \leq
\lim_{t\to\infty}\C(|\nabla\eta_t-\zeta_{\infty}|^2)\;=\;0.
\end{align*}
It follows by Corollary \ref{exist} that
\begin{equation}\label{limiteharmonico}
\Div{\zeta_\infty}=0\quad\PP\mbox{-a.s.}
\end{equation}

\emph{Covariance.} A field $\zeta\in\H$ is characterized by its values on the
edges leaving the origin. Therefore, by taking the covariant canonical
representant defined by
$\zeta_\infty(\vv,\ww,\S):=\zeta_\infty(0,\ww-\vv,\tau_{\vv}\S)$, we can
consider $\zeta_\infty$ to be covariant.

\emph{Gradient field.} To show that $\zeta_\infty$ is a gradient field we prove
that it verifies the co-cycle property, that is there exists
$\Nlf^\star\subseteq\Nlf$, with $\Pe(\Sm\in\Nlf^\star)=1$ and such that for all
$\ss\in\Nlf^\star$ and every closed path
$\vv_{i_0},\vv_{i_1},\ldots,\vv_{i_k}=\vv_{i_0}\in\ss$ with
$a(\vv_{i_j},\vv_{i_{j-1}})=1$, $j=1,\ldots,k$ we have
$\sum_{j=1}^k\zeta_\infty(\vv_{i_j},\vv_{i_{j-1}},\ss)=0.$

Let $n,m\in\Z$. Since $a(s_n,s_m)\nabla\eta_t(s_n,s_m)$
$\overset{L_2(\PP)}{\to}$ $a(s_n,s_m)\zeta_\infty(s_n,s_m)$, we have a
subsequence that converges almost surely. Denote by $\Nlf_{n,m}\subset\Nlf$ the
set where convergence holds.  Using a standard diagonal argument we get a
subsequence $(t_k)_{k\geq0}$ such that \[a(s_n,s_m)\nabla\eta_{t_k}(s_n,s_m)
\overset{\mbox{a.s}}{\longrightarrow} a(s_n,s_m)\zeta_\infty(s_n,s_m) \quad
\mbox{ for all } n,m\in\Z.\] Define
$\Nlf^\star=\bigcap_{n,m\in\Z}\Nlf_{n,m}$. Since the co-cycle property holds for
every $t$ the a.s.\ convergence implies the co-cycle property for
$\zeta_\infty$.

\emph{Tilt.}  The tilt is a continuous functional in $\H$ and it is
constant for the dynamics by Proposition~\ref{tilt2}. Hence the limit
$\zeta_\infty$ has the same tilt as the initial surface. This completes the proof of
(a) and (b) of the theorem.
\end{proof}

\section{Uniqueness of harmonic  surfaces in $d=2$.}
\label{uniqueness}
In this section we prove uniqueness (up to an additive constant) of the harmonic
surface with covariant gradient for $d=2$. Observe that in dimension one the
harmonic function with a given tilt can be explicitly computed and hence the
uniquness follows immediately. To prove uniqueness for $d=2$ we use the
following result. 
\begin{Theorem}[Theorem 5.1 of Berger and Biskup \cite{Biskup}]
  \label{teo:max}
  For $c\in\R^2$, let $\gamma(s)=c\cdot s$, and $h$ be a harmonic surface for
  $a(\cdot, \cdot,\Sm)$ with covariant gradient in $\H$ and tilt
  $\cI(h)=\cI(\gamma)$. Then
\begin{equation}
\label{cota.unif}
\lim_{n\to\infty}\frac{1}{n}\max_{\vv\in \S\cap[-n,n]^2}\{|h(\vv)-c\cdot\vv|\}=0, \quad \Pe\mbox{-{\rm a.s.}}
\end{equation}
\end{Theorem}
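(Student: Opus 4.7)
The statement is Theorem 5.1 of \cite{Biskup} in the setting of 2D supercritical percolation, and the strategy adapts to the present setting. Set $\chi(s) := h(s) - c \cdot s$; since $h$ and $\gamma$ have covariant gradients in $\H$ with the same tilt $c$, the field $\nabla \chi$ is covariant and lies in $\H$ with $\cJ(\chi) = 0$. By Proposition \ref{tilt} applied to $\chi$, for each fixed $u \in \R^2$,
\begin{equation*}
\lim_{K \to \pm\infty} \frac{\chi(\Cen(Ku, \Sm))}{K} = 0, \qquad \Pe\text{-a.s.},
\end{equation*}
and after a diagonal extraction over a countable dense family of directions this holds simultaneously on a single $\Pe$-probability-one event.

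The next step is a density statement. For $\epsilon > 0$ define the ``bad'' set $B_\epsilon := \{s \in \Sm : |\chi(s)| > \epsilon |s|\}$. The ray-wise sublinearity above, combined with the Pointwise Ergodic Theorem (assumption A1), shows that the Ces\`aro density of $B_\epsilon$ inside $[-n,n]^2$ tends to $0$ as $n \to \infty$, $\Pe$-a.s. The heart of the proof is then a planarity argument specific to $d=2$: because the Delaunay triangulation of $\Sm$ is planar, one uses Peierls-type reasoning on the dual graph to show that for all $n$ large enough there exists a closed Delaunay path contained in $\Sm \setminus B_\epsilon$ that encircles $[-n,n]^2$ and lies within distance $o(n)$ of its boundary; on such a ``good loop'' the values of $|\chi|$ are uniformly $O(\epsilon n)$.

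To propagate this boundary control to the interior, use that $h$ is harmonic, so along the continuous-time Delaunay random walk $Y_t$ the process $h(Y_t) = \chi(Y_t) + c \cdot Y_t$ is a martingale. Stopping $Y_t$ at the first hitting of the good loop gives a Poisson-kernel-type representation of $h(s)$ for $s \in \Sm \cap [-n,n]^2$ as an average of its values on the loop, from which $|\chi(s)| \le O(\epsilon n) + o(n)$ follows uniformly in $s \in \Sm \cap [-n,n]^2$. Since $\epsilon > 0$ is arbitrary, the claimed uniform sublinearity follows.

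\textbf{Main obstacle:} The delicate point, and the step where dimension two is used essentially, is the Peierls/planar-duality argument showing that the low-density set $B_\epsilon$ cannot block every Delaunay cycle around the origin. Making this quantitative requires control on the lengths of potential ``bad'' interfaces in the Delaunay graph — this is where assumption A4 (second moment of $\ell_{d-1}(\partial\Vor(0,\Sm))$) and assumption A6 (aperiodicity, so that translations generate a genuinely ergodic action) enter — together with a deterministic geometric fact about planar triangulations. Turning the loop bound into a uniform interior bound via the random walk also demands that $Y_t$ reach the good loop on a time scale contributing only $o(n)$ to the martingale increment, which in turn uses diffusive heat-kernel estimates for $Y_t$ on the Delaunay graph, in the spirit of Barlow's estimates invoked in \cite{Biskup}.
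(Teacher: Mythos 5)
The paper does not prove this theorem at all: it is stated with the remark ``We omit the proof; it follows \cite{Biskup}, details can be found in \cite{tesis.rafa}.'' So there is no in-paper proof to compare against; one can only compare your sketch with the Berger--Biskup argument the paper is pointing to.

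Your outline has a genuine gap at the ``propagation to the interior'' step, and the gap is not a detail. Writing $\chi(s)=h(s)-c\cdot s$ and stopping the walk at the first hitting time $\tau$ of your good loop gives
\begin{equation*}
\chi(s)\;=\;E\bigl[\chi(Y_\tau)\bigr]\;+\;c\cdot E\bigl[Y_\tau-s\bigr],
\end{equation*}
and only the first term is controlled by the loop bound $|\chi|=O(\epsilon n)$. The second term is the drift of the stopped walk. Since the walk on the Delaunay graph \emph{does} have a local drift (removing that drift is precisely what the corrector is for), there is no reason for $E[Y_\tau-s]$ to be $o(n)$ when your loop encircles the whole box $[-n,n]^2$ and $s$ sits near its centre: then $|Y_\tau-s|$ is of order $n$ deterministically, and $c\cdot E[Y_\tau-s]$ can be $O(n)$, which destroys the bound. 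Trying to argue the drift is small by invoking the vector martingale $H(Y_t)$ is circular, since it amounts to assuming the uniform sublinearity of the vector corrector, which is the theorem itself. The same objection hits a maximum-principle version of the step: $\max_{y\in\partial}|c\cdot(y-s)|\sim n$ when the separating set $\partial$ is a big loop near the box boundary. What the Berger--Biskup $d=2$ argument actually needs is a ``good'' separating loop at distance $o(n)$ \emph{from each interior point $s$}, not a single loop hugging the boundary of the box; getting such loops at all scales from the average-density estimate and planarity is the combinatorial core of their Section~5 and is exactly the part your sketch does not address.

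A secondary but telling problem is your appeal to heat-kernel estimates ``in the spirit of Barlow.'' The $d=2$ argument is specifically engineered to avoid heat-kernel bounds --- the paper says so twice, both when discussing \cite{Biskup} (``they are used just for $d\ge 3$'') and in the Final Comments, where it notes that Barlow-type estimates have not been established in the Delaunay setting and are the obstacle to extending uniqueness beyond $d=2$. Importing them into the $d=2$ proof therefore both contradicts the method the paper adapts and assumes machinery that is unavailable here.
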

We omit the proof; it follows \cite{Biskup}, details can be found in
\cite{tesis.rafa}. Berger and Biskup \cite{Biskup} use this theorem to show
uniqueness of the harmonic surface on the supercritical bond-percolation cluster
in $\Z^2$; we adapt their proof to our case. Theorem 2.4 of Bikup and Prescott
\cite{BiskupPrescott} proves \eqref{cota.unif} for bond percolation in $\Z^d$
for all $d\ge 2$ under ``heat kernel estimates'' assumptions, see (2.17) and
(2.18) in that paper.  These estimates are to be established in our setting.
  
\begin{proof}[{\bf Proof of (c) of Theorem \ref{t1}.}]
  It is enough to show that if $h$ is a harmonic surface with $\cI(h)=0$, then
  $\nabla h=0$ or, equivalently, $\C(|\nabla h|^2)=0$. From the considerations
  after \eqref{cesaro}, if $\nabla h\in\H$ then, with probability $1$,
\begin{eqnarray*}
\C(|\nabla
h|^2)&=&\lim_{n\to\infty}\frac{1}{2(2n)^2}\sum_{\vv\in\Se\cap[-n,n]^2}\sum_{
\ww\in\Se}a(\vv,\ww)|\nabla h(\vv,\ww)|^2
\end{eqnarray*}
Let $S_n =\Se\cap{[-n,n]^2}$. Using that $h$ is harmonic rewrite the sum at the right hand side as
\begin{align*}
\sum_{\substack{\vv\in S_n \\ \ww\in\Se}}a(\vv,\ww)|\nabla h(\vv,\ww)|^2
= &  \sum_{\substack{\vv\in S_n \\ \ww\in\Se}}a(\vv,\ww)h(\ww)\nabla h(\vv,\ww)
- \sum_{\vv\in S_n}h(\vv)\sum_{\ww\in\Se}a(\vv,\ww)\nabla h(\vv,\ww)
\\
= & \sum_{\substack{\vv\in S_n \\ \ww\in\Se}}a(\vv,\ww)h(\ww)\nabla
h(\vv,\ww).
\end{align*}
Using harmonicity again, we obtain
\begin{align*}
\sum_{\substack{\vv\in S_n \\ \ww\in\Se}}a(\vv,\ww)|\nabla h(\vv,\ww)|^2
= & \sum_{\substack{\vv\in S_n \\ \ww\in S_n}}a(\vv,\ww)h(\ww)\nabla
h(\vv,\ww) + \sum_{\substack{\vv\in S_n \\ \ww\in \Se\backslash
S_n}}a(\vv,\ww)h(\ww)\nabla
h(\vv,\ww).\\
= & \sum_{\substack{\vv\in S_n \\ \ww\in S_n}}a(\ww,\vv)h(\vv)\nabla
h(\ww,\vv) + \sum_{\substack{\vv\in S_n \\ \ww\in \Se\backslash
S_n}}a(\vv,\ww)h(\ww)\nabla
h(\vv,\ww).\\
= & -\sum_{\substack{\vv\in S_n \\ \ww\in \Se
\backslash S_n}}a(\ww,\vv)h(\vv)\nabla
h(\ww,\vv) + \sum_{\substack{\vv\in S_n \\ \ww\in \Se\backslash
S_n}}a(\vv,\ww)h(\ww)\nabla
h(\vv,\ww).\\
= & \sum_{\substack{\vv\in S_n \\ \ww\in\Se\backslash
S_n}}a(\vv,\ww)(h(\vv)+h(\ww))\nabla h(\vv,\ww)
\end{align*}
Then, with $\Pe$-probability $1$,
\begin{eqnarray*}
\C(|\nabla h|^2)&=&\lim_{n\to\infty}\frac{1}{8n^2}\sum_{\substack{\vv\in S_n \\
\ww\in\Se\backslash S_n}}a(\vv,\ww)(h(\ww)+h(\vv))\nabla h(\vv,\ww).
\end{eqnarray*}
Since this limit exists a.s., we are done if we can show that the r.h.s
converges to zero in probability. Observe that
\[
\Big| \hspace{-7pt}\sum_{\substack{\vv\in S_n \\
    \ww\in\Se\backslash S_n}}a(\vv,\ww)(h(\ww)+h(\vv))\nabla h(\vv,\ww)\Big|
\;\leq\; \max_{\substack{\vv\in S_{n},\\
    \ww\in\Se\backslash
    S_n}}\{a(\vv,\ww)|h(\vv)+h(\ww)|\}\hspace{-5pt}\sum_{\substack{\vv\in S_n \\
    \ww\in\Se\backslash S_n}}a(\vv,\ww)|\nabla h(\vv,\ww)|.
\]
Let $A_n:=\{\mbox{There exists $\vv\in\Se_n$ and $\ww\in\Se\backslash\Se_{2n}$ such
  that $a(\vv,\ww)=1$}\}$, and observe that
\[
\Pe(A_n)\;\leq\; \E\sum_{\vv\in \Se_n}\sum_{\ww\in\Se\backslash\Se_{2n}}a(\vv,\ww)
\;\leq\;\E\sum_{\vv\in
\Se_n}\sum_{\ww\in\Se\backslash\Se_{2n}}a(\vv,\ww)\frac{|\vv'-\vv|^4}{n^4}\;\leq\;
\frac{1}{n^2}\E[\sum_{\vv\in\S}a(0,\vv)|\vv|^4].
\]
Therefore, by Borel-Cantelli, the fact that $I(h)=0$ and Theorem \ref{teo:max},
given $\varepsilon$ we can take $n$ big enough such that
\[
{(2n)^{-1}}\max_{\vv\in S_{n},\ww\in\Se\backslash
S_n}\{a(\vv,\ww)|h(\vv)+h(\ww)|\}\leq\frac{1}{n}\max_{\vv\in
S_{2n}}\{|h(\vv)|\}<\varepsilon.
\]
It follows that
\[
\lim_{n\to\infty}{(2n)^{-1}}\max_{\vv\in S_{n},\ww\in\Se\backslash
S_n}\{a(\vv,\ww)|h(\vv)+h(\ww)|\}=0, \quad \Pe\mbox{-a.s.}
\]
and therefore it is enough to show that there exists a sequence $(Z_n)_{n\geq1}$
such that
\[
Z_n\geq\frac{1}{n}\phi_n(\Se):=\frac{1}{n}\sum_{\vv\in \Se_n}\sum_{\vv\in \Se\backslash\Se_n}a(\vv,\ww)|\nabla h(\vv,\ww)|,
\]
almost surely and $Z_n$ converges in probability.

Given $B,B'\in\B(\R^2)$, let $
\phi_{B,B'}(\Se):=\sum_{\vv\in\Se}\sum_{\ww\in\Se}a(\vv,\ww,\Se)|\nabla h(\vv,\ww,\Se)|\one_B(\vv)\one_{B'}(\ww),
$
and observe that by the refined Campbell formula and the covariance of $\nabla h$ and $a$
\begin{align}\label{eq:border1}
\E\phi_{B,B'}
&=\int_{\R^2}\E\sum_{\ww\in\tau_{-\vv}\Se}a(\vv,\ww,\tau_{-\vv}\Se)|\nabla
h(\vv,\ww,\tau_{-\vv}\Se)|\one_B(\vv)\one_{B'}(\ww)d\vv  \nonumber\\
&=\int_{\R^2}\E\left[\sum_{\ww\in\Se}a(\vv,\ww+\vv,\tau_{-\vv}\Se)|\nabla h(\vv,\ww+\vv,\tau_{-\vv}\Se)|\one_B(\vv)\one_{B'}(\ww+\vv)\right]d\vv \nonumber\\
&=\int_{\R^2}\E\left[\sum_{\ww\in\Se}a(0,\ww,\Se)|\nabla h(0,\ww,\Se)|\one_B(\vv)\one_{B'}(\ww+\vv)\right]d\vv \nonumber\\
&=\E\sum_{\ww\in\Se}a(0,\ww,\Se)|\nabla h(0,\ww,\Se)|\ell({B}\cap\tau_\ww {B'})
\end{align}

Let $B_n=[-n,n]^2$ and $\mathcal{X}_n$ be the family of half-planes defined
by the borders of $B_n$, and disjoint from $B_n$. It is clear that
\[
\sum_{\substack{\vv\in S_n \\ \ww\in\Se\backslash S_n}}a(\vv,\ww)|\nabla h(\vv,\ww)|\leq\sum_{B\in \mathcal{X}_n}\phi_{B_n,B}(\Se).
\]
We show the convergence of $\frac{1}{n}\phi_{B_n,B}(\Se)$ for a fixed $B\in
\mathcal{X}_n$. The convergence of the other terms follows from the same
arguments.

Before proceeding, we have yet another approximation to take care of. Let
$H_n=\R\times[n,+\infty)$, $G_n=[-n,n]\times(-\infty,n]$,
and observe that
\[
\phi_{B_n,H_n}(\Se)\leq\phi_{G_n,H_n}(\Se),\qquad\rm{a.s.}
\]

Let us see what happens with a fixed line first. To do that, let
$G=[0,1]\times\R^-$ and $G_n^o=[-n,n]\times\R^-$. If we define $T=\tau_{e_1}$,
by the covariance of $\nabla h$ and Birkhoff Ergodic Theorem, it follows that
\[
\lim_{n\to\infty}\frac{1}{n}\phi_{G_n^o,H_0}(\Se)=\frac{1}{n}\lim_{n\to\infty}\sum_{k=-n}^{n-1}\phi_{G,H_0}(T^k\Se)=2\E[\phi_{G,H_0}(\Se)]<\infty \quad \mbox{a.s.}
\]  
By the covariance of $\nabla h$ it follows that
\[
\lim_{n\to\infty}\Pe(|\phi_{G_n,H_n}(\Se)-2\E[\phi_{G,H_0}(\Se)]|>\epsilon n)=\lim_{n\to\infty}\Pe(|\phi_{G_n^o,H_0}(\tau_{ne_2}\Se)-2\E[\phi_{G,H_0}(\Se)]|>\epsilon n)=0,
\]
and the result follows.
\end{proof}

\section{Final Comments}

\subsection{Invariance Principle} The key ingredient to obtain an invariance
principle from the existence of a harmonic deformation of the original graph is
a uniform sublinear bound of the corrector as in \eqref{cota.unif}. Grisi
\cite{tesis.rafa} obtained this bound for the Poisson process following the
arguments of Berger and Biskup \cite{Biskup} in $d=2$. Hence the quenched
invariance principle holds in the Delaunay triangulation of a Poisson process.
Presumably this also holds for a non-periodic ergodic process satisfying
assumptions {A1-A7}. For $d\ge 3$ the
proofs of a quenched invariance principle in the percolation setting and
related models rely on heat kernel estimates like
those obtained by Barlow \cite{barlow}, which do not follow from the sublinear
behavior of the corrector along lines. An extension of these bounds to our case
are to be obtained.

\subsection{The process trajectory is orthogonal to the space of harmonic
  surfaces.} Since the tilt in the direction $u\in\R^d$ is a continuous
functional in $\H$, by Riesz Theorem, there exist a field $\omega_u \in \H$ such
that the tilt is given by the scalar product with $\omega_u$. In our
case, we have found explicitly that field (the one given in \eqref{p12}).

Given an initial condition $\gamma$, the process $\psi_t=\eta_t^\gamma - \gamma$
is a translation invariant surface and has zero tilt. The convergence of
$\nabla\psi_t$ follows from the convergence of $\nabla\eta_t^\gamma$, and the
limiting field is the gradient of the corrector $\nabla\chi_\gamma:=\nabla
h-\nabla\gamma$, for $h$ given by Theorem \ref{t1}. Integrating by
parts and using translation invariance, for $\zeta\in\H$ with
$\Div\zeta\equiv0$,
\[
\C(\nabla(\gamma-\eta_t^\gamma)\zeta) = - \C((\gamma - \eta_t^\gamma)\Div \zeta)
= 0,\qquad \hbox{for all }t\ge 0.
\] 
Hence $\gamma - \eta_t^\gamma$ is orthogonal to the subspace of fields in $\H$
with zero divergence (which contains the gradients of all harmonic
surfaces). In fact, $\nabla h$ is the orthogonal projection of
$\nabla\gamma$ over this subspace. In particular, we have
\begin{equation}
\label{decomposition}
\nabla \gamma= \nabla h + (\nabla\gamma - \nabla h)
\end{equation}

Mathieu and Piatnitski \cite{Mathieu} also consider $L_2(\Xi_2, \C)$. Equation
\eqref{decomposition} corresponds to their decomposition of the space as
$L_2(\Xi_2, \C) = L_2^{sol} \oplus L_2^{pot}$. Taking
$\gamma_i(s):=(e_i\cdot s)$, $i\leq d$, the surface
$\chi:=(\chi_{\gamma_1},\ldots,\chi_{\gamma_d})$ is what they call the {\em
  corrector}.

\subsection{Regularization effect.} The regularization effect observed in
Fig. \ref{fig:2s} can be explicitly formulated as follows.  If one takes $n$
arbitrary points $s_1,\ldots,s_n\in\R^2$, the barycenter minimizes the following
sum of scalar products
\begin{equation}
  \label{dd3}
  \arg\min_{x\in\R^2}\sum_{k=1}^n[(s_k-x)\cdot(s_{k+1}-x)]\;=\; \frac1n
\sum_{k=1}^ns_k.
\end{equation}
where $s_{n+1}=s_1$. Take a point configuration $\ss$ and $s,s'\in\ss$ neighbors
in the Delaunay triangulation of $\ss$. The
directed edge $(\vv,\ww)$ is shared by the triangles $\vv\ww\alpha_+$ and
$\vv\ww\alpha_-$, where $\alpha_+(\vv,\ww)$ is the first common neighbor of
$\vv$ and $\ww$ in the clockwise direction from $s'-s$ and $\alpha_-(\vv,\ww)$ is the other
common neighbor. 
We show the following extension of \eqref{dd3} to harmonic surfaces.
\begin{Lemma}
  \label{dd5}
Let $S$ be a stationary point process. Then the harmonic deformation of the
Delaunay triangulation of $S$ minimizes
\begin{equation}
  \label{dd15}
  \E \sum_{s \in \S} a(0,s)\big[G(s) \cdot G(\alpha_+(0,s))\big]
\end{equation}
among deformations $G:\S\mapsto \R^d$ of $\S$ such that $G(0)=0$ and the
corrector $G(s)-s$ has coordinates with gradient in $\H$.
\end{Lemma}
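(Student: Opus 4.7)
The plan is to identify the functional in the lemma as the Dirichlet energy $\|\nabla G\|_\H^2$ in disguise, after which the claim reduces to the standard Dirichlet principle for harmonic surfaces. First I would introduce the per-vertex form: for $v\in\ss$, let $\sigma_1(v),\sigma_2(v),\ldots$ denote the Delaunay neighbors of $v$ in cyclic order, so that $\alpha_+(v,\sigma_k(v))=\sigma_{k+1}(v)$, and set $F_v(G):=\sum_k \nabla G(v,\sigma_k(v))\cdot \nabla G(v,\sigma_{k+1}(v))$. Since $G(0)=0$, the quantity in the lemma equals $\E F_0(G)$; because $\nabla G$ is covariant, $F_v(G)$ is a covariant function of $v$, so by Palm--Campbell it also equals the Ces\`aro mean $\lim|\Lambda|^{-1}\sum_{v\in\Lambda\cap\S}F_v(G)$.

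The key algebraic step is the triangle identity
$$(\beta-\alpha)\cdot(\gamma-\alpha)+(\gamma-\beta)\cdot(\alpha-\beta)+(\alpha-\gamma)\cdot(\beta-\gamma)=\tfrac12\bigl[|\beta-\alpha|^2+|\gamma-\beta|^2+|\alpha-\gamma|^2\bigr],$$
valid for any $\alpha,\beta,\gamma\in\R^d$ (expand with $|x-y|^2=|x|^2+|y|^2-2\,x\cdot y$). Applying it with $(\alpha,\beta,\gamma)=(G(a),G(b),G(c))$ at each Delaunay triangle $T=\{a,b,c\}$ shows that the three contributions to $\sum_v F_v(G)$ coming from the vertices of $T$ sum to $\tfrac12\sum_{e\in T}|\nabla G(e)|^2$. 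In $d=2$ each Delaunay edge belongs to exactly two triangles, so summing over triangles inside $\Lambda$ gives $\sum_{v\in\Lambda}F_v(G)=\sum_{\{v,w\}\subset\Lambda}|\nabla G(v,w)|^2$ up to boundary contributions that are $o(|\Lambda|)$ under A4--A5. Passing to the Ces\`aro limit then yields
$$\E\sum_{s\in\S}a(0,s)\,G(s)\cdot G(\alpha_+(0,s))=\C(|\nabla G|^2)=\|\nabla G\|_\H^2.$$

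With the functional identified as the Dirichlet energy, the lemma is finished by the Dirichlet principle on the class of admissible $G$ sharing the tilt of the identity. Writing $G=H+\phi$ with $\phi(0)=0$ and $\nabla\phi$ a zero-tilt covariant flux in $\H$, I would expand
$$\|\nabla G\|^2=\|\nabla H\|^2+2\,\C(\nabla H\cdot\nabla\phi)+\|\nabla\phi\|^2.$$
The cross term vanishes by Lemma~\ref{intporpartes}, applied to the flux $\nabla H\in\H$ and a translation-invariant representative of $\phi$ (which exists precisely because $\nabla\phi$ has zero tilt): it yields $\C(\nabla\phi\cdot\nabla H)=-\C(\phi\cdot\Delta H)=0$, since $H$ is harmonic. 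Hence $\|\nabla G\|^2\ge\|\nabla H\|^2$, with equality iff $\nabla\phi=0$; in $d=2$, uniqueness of the harmonic surface with prescribed tilt (Theorem~\ref{t1}(c)) then forces $G=H$. The principal technical obstacle will be the boundary control in the Ces\`aro passage and in the integration by parts, both delicate because $G$ and $\phi$ are not translation-invariant surfaces; I expect to handle these in the same spirit as in the analysis of the harness process in Section~\ref{lemmas}, by exploiting the covariance of $\nabla G$ and $\nabla\phi$ in $\H$ together with the moment assumptions A4--A5.
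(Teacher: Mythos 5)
Your route is genuinely different from the paper's and, at least in the first half, cleaner. The paper never looks at the functional triangle-by-triangle: it introduces the auxiliary fields $\zeta_\pm^\eta$, proves the bookkeeping identities \eqref{dd1}--\eqref{dd9}, and then invokes the harness dynamics (Lemma~\ref{dd13}) to argue that $\C(\nabla\eta_t\zeta_+^{\eta_t})$ decreases to its value at the harmonic limit. Your triangle identity bypasses all of this and directly exhibits $\E\sum_{s}a(0,s)G(s)\cdot G(\alpha_+(0,s))$ as $\C(|\nabla G|^2)$, which is a nice simplification. Sanity-checking it against \eqref{dd9} actually reveals a factor-of-two slip in the paper: since $\C(\phi\cdot\Delta\eta)=\E[\phi(0)\Delta\eta(0)]$ in the notation of Lemma~\ref{intporpartes}, the chain in \eqref{dd9} should read $2\C(\nabla\phi\zeta_\pm^\eta)=-\E[\phi(0)\Delta\eta(0)]=-\C(\phi\Delta\eta)=\C(\nabla\phi\nabla\eta)$, i.e.\ $\C(\nabla\phi\zeta_\pm^{\eta})=\tfrac12\C(\nabla\phi\nabla\eta)$, which is exactly what your $\E F_0(\phi)=\C(|\nabla\phi|^2)$ predicts. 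So Step~1 is correct, more elementary, and consistent with a corrected version of the paper's algebra. It is of course intrinsically two-dimensional (each edge in exactly two triangles), but so is the paper's $\alpha_\pm$ construction, so that is fine.

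The gap is in Step~2, in the cross-term. Lemma~\ref{intporpartes} is stated for a \emph{translation invariant} surface $\phi$ with $\E[a(0)\phi^2(0)]<\infty$, and your $\phi=G-H$ is normalized by $\phi(0)\equiv 0$, which is the \emph{wrong} gauge: a translation invariant surface with $\phi(0,\sso)=0$ for all $\sso$ is identically zero, so "$\phi(0)=0$" and "translation invariant" are mutually exclusive gauges for a nonzero $\phi$. To apply the lemma you must pass to a stationary representative $\tilde\phi$ solving $\tilde\phi(0,\tau_{s'}\ss)-\tilde\phi(0,\tau_s\ss)=\nabla\phi(s,s',\ss)$, and you assert this exists "precisely because $\nabla\phi$ has zero tilt." That assertion is not justified: zero tilt is certainly \emph{necessary} for a stationary $L_2$ solution (a translation invariant surface cannot grow linearly), but it is not obviously \emph{sufficient}. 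What one actually needs is $\nabla\phi\in L_2^{\mathrm{pot}}$ in the Weyl decomposition $\H=L_2^{\mathrm{sol}}\oplus L_2^{\mathrm{pot}}$ referenced in Section~7.2, i.e.\ that $\nabla\phi$ is an $\H$-limit of gradients of translation invariant surfaces with the required integrability $\E[a(0)\tilde\phi^2(0)]<\infty$. Whether every zero-tilt covariant cocycle in $\H$ lies in $L_2^{\mathrm{pot}}$ is essentially the uniqueness question for the corrector, which in this paper is only settled for $d\le 2$ via Theorem~\ref{teo:max}, and even there only for harmonic competitors. So the "Dirichlet principle" step needs either a proof of $\nabla\phi\in L_2^{\mathrm{pot}}$ (with the $L_2$-integrability of the stationary lift) or a detour through the harness dynamics as the paper does (which, for what it is worth, itself requires the $r>4$ moment hypothesis of Theorem~\ref{t1} that Lemma~\ref{dd5} does not explicitly impose, and ends with the non sequitur "$\C(\nabla g\zeta_+^g)=0$ iff $g$ harmonic"). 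In short: Step~1 is a real improvement; Step~2 has a genuine hole around the existence and integrability of the stationary representative, and "zero tilt $\Rightarrow$ translation-invariant lift" should not be asserted without proof.
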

We prove this Lemma below. Given a surface $\eta$ define the
fields $\zeta_+^\eta, \zeta_-^\eta\colon \Xi_2\to\R$ by
\[\zeta_\pm^\eta(\vv,\ww)\;:=\;a(s,s')\nabla\eta(\vv,\alpha_\pm(\vv,\ww)).\] Any
two surfaces
$\eta, \phi\colon \Xi_1 \to \R$ satisfy
\begin{equation}
\label{dd1}
\C(\nabla\eta\zeta_+^\phi)=\C(\zeta_-^\eta\nabla\phi).
\end{equation}
Also note that
\begin{equation}
  \label{dd2}
  \sum_{\ww\in\S}a(\vv,\ww)\zeta_\pm^\eta(\vv,\ww)=\Delta \eta(\vv) \quad \mbox{and} \quad \sum_{\ww\in\S}a(\vv,\ww)\zeta_\pm^\eta(\ww,\vv)=0.
\end{equation}
If $\phi$ is a translation invariant surface (that is
$\phi(\vv,\ss)=\phi(0,\tau_\vv\ss)$) then, by the mass transport principle,
\begin{eqnarray}
  2\C(\nabla\phi\zeta_\pm^\eta)
  &=&\E\sum_{\vv\in\S}a(0,\vv)\nabla\phi(0,\vv)\zeta_\pm^\eta(0,\vv)\nonumber\\
&=&\E\, \phi(0)\sum_{\vv\in\S}a(0,\vv)\zeta_\pm^\eta(\vv,0)-\E\phi(0)\sum_{\vv\in\S}a(0,\vv)\zeta_\pm^\eta(0,\vv)\nonumber\\
&=&-2\C(\phi\Delta\eta) = 2\C(\nabla\phi\nabla\eta), \label{dd9}
\end{eqnarray}
where the first identity in the bottom line follows from \eqref{dd2} and the second one by the integration by parts formula. 
\begin{Lemma}
\label{dd13}
  \begin{equation}
    \label{dd10}
    \frac{d}{dt}\C(\nabla\eta_t\zeta_+^{\eta_t}) =
\frac12\frac{d}{dt}\C(|\nabla\eta_t|^2) = -\EE\left[
  a(0)^{-1}\left|\Delta\eta_t(0,\Sm)\right|^2\right].
  \end{equation}
\end{Lemma}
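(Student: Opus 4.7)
The rightmost equality is half of Lemma~\ref{campoemh}, so the only real content is the first identity. The plan is to show that the functional $t \mapsto \C(\nabla\eta_t\cdot\zeta_+^{\eta_t}) - \tfrac12\C(|\nabla\eta_t|^2)$ is in fact constant in $t$, whence the two time derivatives agree; combining with Lemma~\ref{campoemh} then yields the full chain.

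The key tool is the decomposition $\eta_t = \gamma + \psi_t$ established in \eqref{h81}, in which $\psi_t$ is a translation invariant (random) surface and $\gamma$ is the deterministic initial datum. Because the map $\eta\mapsto\zeta_+^\eta$ is linear, $\zeta_+^{\eta_t} = \zeta_+^\gamma + \zeta_+^{\psi_t}$, and both $\C(\nabla\eta_t\cdot\zeta_+^{\eta_t})$ and $\C(|\nabla\eta_t|^2)$ expand bilinearly. The pure-$\gamma$ summands produce the $t$-independent remainder $\C(\nabla\gamma\cdot\zeta_+^\gamma) - \tfrac12\C(|\nabla\gamma|^2)$; the claim reduces to matching the three cross terms of $\C(\nabla\eta_t\cdot\zeta_+^{\eta_t})$ with the two cross terms of $\tfrac12\C(|\nabla\eta_t|^2)$.

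To do so I apply \eqref{dd9}, which for $\phi$ translation invariant identifies $\C(\nabla\phi\cdot\zeta_\pm^\eta)$ with $\tfrac12\C(\nabla\phi\cdot\nabla\eta)$, to the two cross terms in which $\psi_t$ sits in the first slot: this turns $\C(\nabla\psi_t\cdot\zeta_+^\gamma)$ into $\tfrac12\C(\nabla\psi_t\cdot\nabla\gamma)$ and $\C(\nabla\psi_t\cdot\zeta_+^{\psi_t})$ into $\tfrac12\C(|\nabla\psi_t|^2)$. The remaining cross term $\C(\nabla\gamma\cdot\zeta_+^{\psi_t})$ has the translation invariant factor on the wrong side; I first rewrite it as $\C(\zeta_-^\gamma\cdot\nabla\psi_t)$ via the symmetry \eqref{dd1}, then apply \eqref{dd9} once more to reduce it to a second copy of $\tfrac12\C(\nabla\psi_t\cdot\nabla\gamma)$. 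Summing the three contributions gives $\C(\nabla\psi_t\cdot\nabla\gamma) + \tfrac12\C(|\nabla\psi_t|^2)$, which matches exactly the $t$-dependent part of $\tfrac12\C(|\nabla\eta_t|^2) = \tfrac12\C(|\nabla\gamma|^2) + \C(\nabla\psi_t\cdot\nabla\gamma) + \tfrac12\C(|\nabla\psi_t|^2)$. The cancellation and the first identity follow, and Lemma~\ref{campoemh} identifies the common derivative with $-\EE[a(0)^{-1}|\Delta\eta_t(0)|^2]$.

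The main obstacle is the bookkeeping of where the translation invariant factor $\psi_t$ sits before each invocation of \eqref{dd9}: neither $\gamma$ nor $\eta_t$ is itself translation invariant, and \eqref{dd1} is precisely the device that permits swapping the two surfaces in the awkward cross term at the price of exchanging $\zeta_+^\eta$ for $\zeta_-^\eta$. Integrability of all the products that appear is guaranteed by the hypothesis $\C(|\nabla\gamma|^r)<\infty$ for $r>4$ together with Lemma~\ref{momentosprocesso} and H\"older, so the bilinear manipulations are legitimate.
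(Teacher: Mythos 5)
Your route is the same as the paper's: write $\nabla\eta_t=\nabla\gamma+\nabla\psi_t$ and $\zeta_+^{\eta_t}=\zeta_+^{\gamma}+\zeta_+^{\psi_t}$, move $\psi_t$ into the first slot of the awkward cross term via the $\pm$-swap \eqref{dd1}, and then reduce each $\zeta_\pm$-pairing to an ordinary gradient pairing using the mass-transport identity valid when the surface factor is translation invariant. There is no gap in the argument.

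One point deserves a flag, however. You quote \eqref{dd9} as giving $\C(\nabla\phi\cdot\zeta_\pm^\eta)=\tfrac12\C(\nabla\phi\cdot\nabla\eta)$, but \eqref{dd9} as printed reads $2\C(\nabla\phi\zeta_\pm^\eta)=-2\C(\phi\Delta\eta)=2\C(\nabla\phi\nabla\eta)$, i.e.\ it asserts $\C(\nabla\phi\zeta_\pm^\eta)=\C(\nabla\phi\nabla\eta)$ with no $\tfrac12$. Your $\tfrac12$ is actually the correct one: a direct computation from \eqref{dd2} and Lemma~\ref{masstransport} gives $\C(\nabla\phi\cdot\zeta_\pm^\eta)=-\tfrac12\E[\phi(0)\Delta\eta(0)]$, whereas the integration-by-parts Lemma~\ref{intporpartes} with $\zeta=\nabla\eta$ gives $\C(\nabla\phi\cdot\nabla\eta)=-\E[\phi(0)\Delta\eta(0)]$, so $\C(\nabla\phi\cdot\zeta_\pm^\eta)=\tfrac12\C(\nabla\phi\cdot\nabla\eta)$ as you use. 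The last equality in \eqref{dd9} carries a factor-of-two slip, and the paper's own proof of Lemma~\ref{dd13}, read literally against \eqref{dd9}, would produce $\frac{d}{dt}\C(\nabla\eta_t\zeta_+^{\eta_t})=\frac{d}{dt}\C(|\nabla\eta_t|^2)$, not the $\tfrac12\frac{d}{dt}\C(|\nabla\eta_t|^2)$ claimed in \eqref{dd10}. Your bookkeeping reproduces the Lemma's stated conclusion; you should simply say explicitly that you are using the corrected version of \eqref{dd9} (and ideally derive the $\tfrac12$), so that a reader does not think you have merely misquoted the display.
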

\begin{proof}
Using \eqref{dd1} and $\nabla
\eta_t = \nabla\gamma + \nabla\psi_t$,
\begin{eqnarray*}
\C(\nabla\eta_t\zeta_+^{\eta_t})&=&\C(\nabla\gamma\zeta_+^{\gamma})+\C(\zeta_-^\gamma\nabla\psi_t)+\C(\nabla\psi_t\zeta_+^{\eta_t})\\
&=&\C(\nabla\gamma\zeta_+^{\gamma})+\C(\nabla\gamma\nabla\psi_t)+\C(\nabla\psi_t\nabla\eta_t)\\
&=&\C(\nabla\gamma\zeta_+^{\gamma})+\C(\nabla\gamma\nabla\eta_t)+\C(\nabla\psi_t\nabla\eta_t)-\C(|\nabla\gamma|^2)\\
&=&\C(\nabla\gamma\zeta_+^{\gamma})+\C(|\nabla\eta_t|^2)-\C(|\nabla\gamma|^2),
\end{eqnarray*}
where the second identity follows from \eqref{dd9}. This shows the first
identity in \eqref{dd10}; the second identity is \eqref{dd11}.
\end{proof}

\begin{proof}[Proof of Lemma \ref{dd5}]
  Lemma \ref{dd13} shows that $\C(\nabla\eta_t\zeta_+^{\eta_t})$ is
  non-increasing, and that it is strictly decreasing if and only if $\eta_t$ is
  not harmonic and hence 
\[
\C(\nabla g\zeta_+^{g})=0 \quad \hbox{ if and only if } g \hbox{ is harmonic}
\]
Taking $g_i$ as the coordinates of $G$ and using that $G(0)=0$, we get
\eqref{dd15}.
\end{proof}

\subsection{Some simulations.} The first two pictures in Figure \ref{dibus} show
level curves of a linear interpolation of the surface $\gamma - h$. In the first
one some level curves are drawn. From blue (minimum) to red (maximum). The
level curve of zero is drawn in green. In the second one the sublevel set of
zero is drawn in blue and the superlevel set is drawn in red. The black curve
is the level set of zero. 

The next picture is the Voronoi tesselation of the harmonic points. The Delaunay
triangulation of this points does not necessarily coincide with the harmonic
deformation of the original Delaunay triangulation. It is easy to construct
examples where this in fact happens, and it can be seen in simulations. However,
it can be appreciated in simulations that the density of triangles in the
harmonic graph that are not Delaunay triangles is very low, as shown in Figure
\ref{dibus2}. Finally, on the bottom-right of Figure \ref{dibus}, the
level curves of the harmonic surface
with tilt $(1,0)$ is shown, that is the limit of the dynamics
with initial condition given by the hyperplane $\gamma(x,y)=x$. Observe
that the surface is pretty close to the original condition.

\begin{figure}
\[
\hspace{-60pt}\begin{array}{cc}
\includegraphics[height=6.2cm]{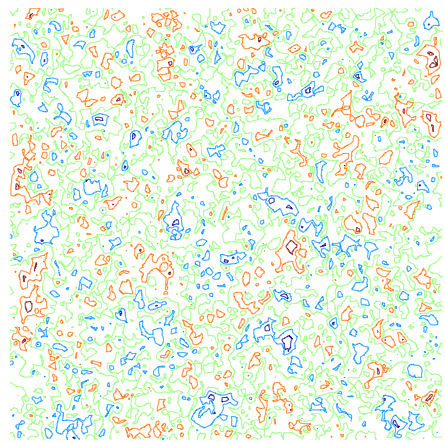}& \hspace{-40pt}
\includegraphics[height=6cm]{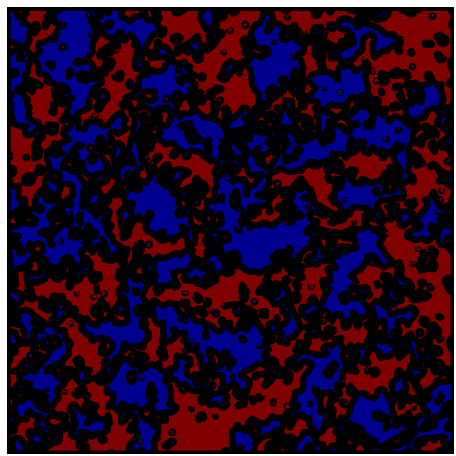}
\\
\includegraphics[width=6cm]{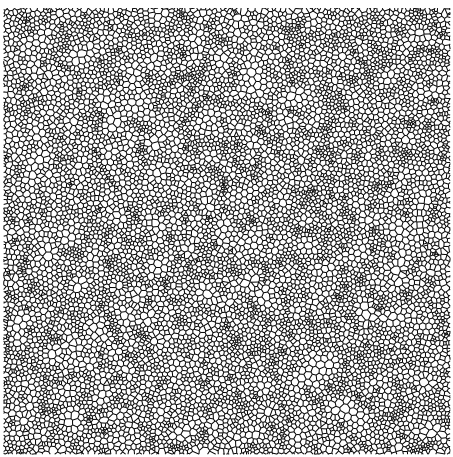}& \hspace{-40pt}
\includegraphics[width=6cm,height=6cm]{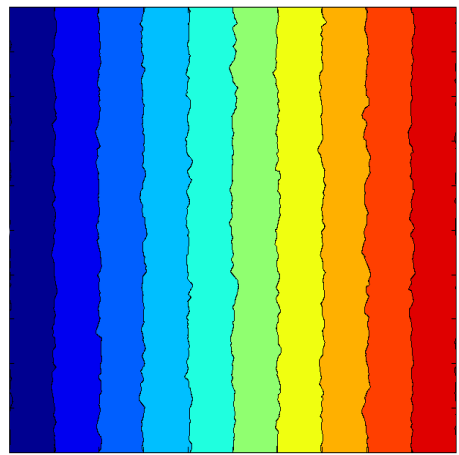}
\end{array}
\]
\caption{Some harmonic pictures	}
\label{dibus}
\end{figure}

\begin{figure}
\[
\begin{array}{c}
\includegraphics[height=9cm]{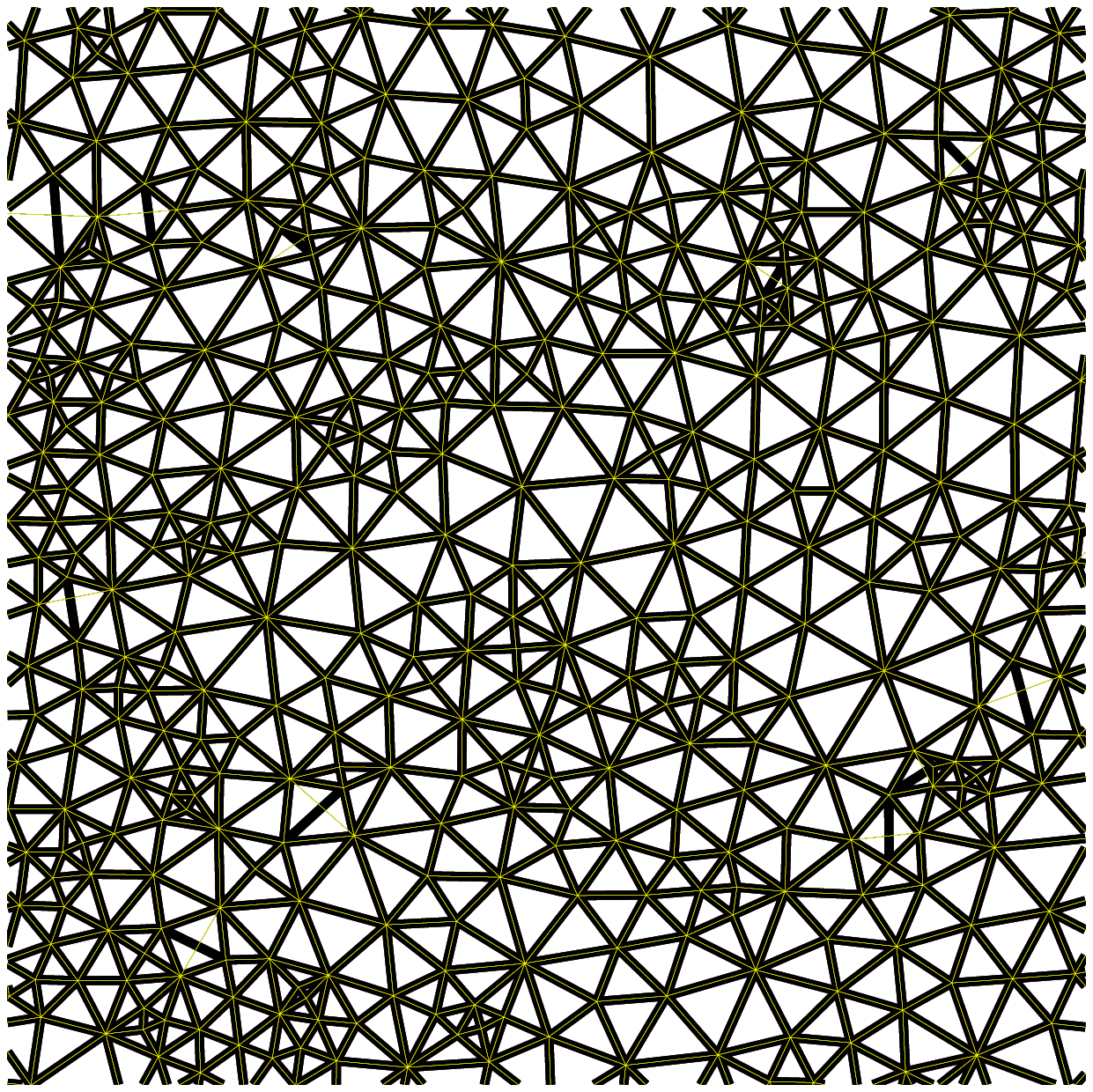}
\end{array}
\]
\caption{Delaunay triangulation of harmonic points (black and thick dashed) vs.
harmonic graph (yellow)}
\label{dibus2}
\end{figure}

\section{Appendix. The Random Walk and the Environment Process}

This appendix collects some technical results used in Section
\ref{harness}. The {\em environment seen from the particle} was used by De Masi
et al.\/ \cite{FerrariWick} to show the annealed invariance principle for the
random walk in the supercritical bond-percolation cluster. We adapt some of
those results to our setting.

Let $\ss\in \Nlf$ and $s\in
\ss$. Let $\dX^{\vv}_n$ be a discrete time random walk on $\ss$ with law
$\dP_\ss$ defined by $\dX^\vv_0=\vv$ and for $n\ge 1$,
\[
\dP_\ss(\dX^\vv_n=\vv''|\dX^\vv_{n-1}=\ww)=\frac{a(\ww,\vv'',\ss)}{a(\ww,\ss)}.
\]
That is, the walk starts at $s$ and if it is at $s'\in\ss$, then it
chooses a neighbor uniformly at random and jumps over it. Let $\dE_\ss$ be
the expectation with respect to $\dP_\ss$.

To build the continuous time walk, let $N=\{T_k; k \in\N\}$ be a rate $1$
homogeneous Poisson Process in $\R_+$, independent of $(\dX_n)_{n\geq 0}$, and
define
\begin{equation}\label{passeiocontinuo}
X_t:=\dX_{N(t)},
\end{equation}
where $N(t)=|N\cap (0,t]|$ is the number of points of $N$ in the interval
$(0,t]$. Let $P_\ss=\dP_\ss\otimes Q$, where $Q$ is the law of $N$ in
$(\Nlf(\R^+),\B(\Nlf(\R^+)))$.  The law of $X^0_t$ coincides with the law of the
walk $B^0_{[t,0]}$ defined in Section \ref{construction}, so that the results
below hold for $B^0_{[t,0]}$.

Given the process $\dX^{\vv}_n$ (with initial state $\vv\in\ss$), define the
process \[\ss_n^\circ=\tau_{\dX^s_n}\ss.\] This process can be thought as the
\emph{environment as seen from the particle} moving according to $\dX^{\vv}_n$. The
process $\ss_n^\circ$ is Markov with values in $\Nlf^\circ$ (i.e. for all
$n$, $0\in\ss_n^\circ)$. We
use $P_\ss$ to denote the law of $\ss_n^\circ$ in $\Nlf^{\Z^+}$ with initial
state $\ss$ .

Let $\M$ be the set of aperiodic $\ss$: 
\begin{equation}\label{eq:espaco_dif}
\M=\{\ss\in \Nlf\colon \tau_x\ss\neq \ss\mbox{ for all }x\in \R^d,\ x\neq 0\}.
\end{equation}
If $\ss$ is aperiodic, then the trajectory of $\ss_n$
determines univoquely the trajectory of the walk $\dX^0_n$. 
The Poisson Process is aperiodic almost surely.

Let $S$ be an ergodic point process in $\R^d$, with Palm version $\S$. Denote
by $\Q$ the probability measure on $(\Nlf,\B(\Nlf))$
given by
\[
\int f(\ss) \Q(d\ss)=\frac{1}{\E a(0)}\E [a(0)f(\S)].
\]
for bounded measurable $f\colon \Nlf\to \R$.

\begin{Lemma}\label{meio_particula}
The process $(\ss_n^\circ)_{n\geq0}$ is reversible and ergodic under $\Q$.
\end{Lemma}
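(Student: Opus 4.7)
The result splits into two independent parts. Reversibility will follow from a one-line application of the Mass Transport Principle (Lemma \ref{masstransport}); ergodicity will come from a Dirichlet-form identity combined with the spatial ergodicity of $S$ provided by A1.

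\textbf{Reversibility.} Detailed balance is the assertion that, for bounded measurable $f, g : \Nlf^\circ \to \R$,
\begin{equation*}
\int \sum_{\vv\in\sso} \frac{a(0,\vv,\sso)}{a(0,\sso)}\, f(\sso)\, g(\tau_\vv \sso)\, \Q(d\sso)
= \int \sum_{\vv\in\sso} \frac{a(0,\vv,\sso)}{a(0,\sso)}\, g(\sso)\, f(\tau_\vv \sso)\, \Q(d\sso).
\end{equation*}
Writing $\int h\, d\Q = (\E a(0))^{-1}\E[a(0)h(\S)]$, both sides simplify to $(\E a(0))^{-1}$ times, respectively,
\begin{equation*}
\E\sum_{\vv\in\S} a(0,\vv,\S)\, f(\S)\, g(\tau_\vv \S)
\quad\text{and}\quad
\E\sum_{\vv\in\S} a(0,\vv,\S)\, g(\S)\, f(\tau_\vv \S).
\end{equation*}
After splitting $f, g$ into positive and negative parts, these agree by Lemma \ref{masstransport} applied to the covariant field $\zeta(\vv,\vv',\S) := a(\vv,\vv',\S)\, f(\tau_\vv \S)\, g(\tau_{\vv'} \S)$, together with the symmetry of $a$.

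\textbf{Ergodicity.} Let $f : \Nlf^\circ \to \R$ be bounded and invariant for the chain. Reversibility makes $\Q$ stationary, so
\begin{equation*}
\EE_\Q\bigl[(f(\ss_1^\circ) - f(\ss_0^\circ))^2\bigr] = 2\int f^2\, d\Q - 2\EE_\Q\bigl[f(\ss_0^\circ)f(\ss_1^\circ)\bigr] = 0,
\end{equation*}
the last equality coming from the invariance identity $\EE_\sso[f(\ss_1^\circ)] = f(\sso)$. Hence $f(\tau_\vv\sso) = f(\sso)$ for every Delaunay neighbor $\vv$ of the origin, $\Q$-a.s.; $\Q$-stationarity of the chain keeps this one-step identity valid along trajectories, so by the $\Pe$-a.s.\ connectedness of the Delaunay graph (A2) one obtains $f(\tau_\vv\sso) = f(\sso)$ for every $\vv\in\sso$, $\Q$-a.s. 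Using Campbell's formula to transfer this Palm-a.s.\ statement to the stationary law, the functional $\tilde f(\ss) := f(\tau_\vv\ss)$ (any $\vv\in\ss$) is well-defined $\Pe$-a.s.\ and, since $\tilde f(\tau_x\ss) = f(\tau_{\vv-x}\tau_x\ss) = f(\tau_\vv\ss) = \tilde f(\ss)$, it is translation-invariant. By A1, $S$ is ergodic under translations, so $\tilde f$ is $\Pe$-a.s.\ constant and hence $f$ is $\Q$-a.s.\ constant, which is ergodicity.

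\textbf{Main obstacle.} The delicate step is the iteration of the one-step identity $f(\sso) = f(\tau_\vv \sso)$ along arbitrary Delaunay paths: the identity holds only $\Q$-a.s., and propagating it requires the null set of bad configurations to be preserved under one-step transitions (automatic from $\Q$-stationarity) combined with a coherent enumeration of $\S$. The subsequent Palm-to-stationary transfer of a.s.\ statements relies precisely on the aperiodicity assumption A6 and the enumeration provided by B1--B3.
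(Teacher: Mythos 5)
Your proof is correct and follows the same underlying structure as the paper's. The reversibility argument is essentially verbatim: both reduce detailed balance to the Mass Transport Principle applied to the covariant field $a(\vv,\ww,\S)f(\tau_\vv\S)g(\tau_\ww\S)$ and the symmetry of $a$. For ergodicity the two proofs use equivalent characterizations: the paper works with invariant \emph{sets} ($\ss_0^\circ\in A \Rightarrow \ss_1^\circ\in A$), which makes the iteration along Delaunay paths trivial since membership in $A$ is transported deterministically, whereas you work with invariant \emph{functions} and a Dirichlet-form identity $\EE_\Q[(f(\ss_1^\circ)-f(\ss_0^\circ))^2]=0$. Your route is equally valid but requires the extra care you flag: one must propagate an a.s.\ one-step identity along trajectories via stationarity of $\Q$ plus positive transition probabilities; the paper's invariant-set formulation sidesteps that bookkeeping.

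Two attributions in your closing remark are off, though neither is a real gap. Connectedness of the Delaunay graph is not a consequence of A2 (A2 only rules out degenerate cospherical configurations so the triangulation is well-defined); it is a separate geometric fact about Delaunay triangulations of locally finite point sets, implicitly used in the paper as well. And the Palm-to-stationary transfer of the translation-invariant event at the end does not rely on A6 or the point maps of B1--B3 --- it is the standard identification of translation-invariant events under $\Pe$ and its Palm version, which needs only stationarity and ergodicity (A1). Where A6/B1--B3 actually enter is upstream, in the proof of the Mass Transport Principle you invoke for reversibility.
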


\begin{proof}
  To check reversibility, let $f,g\colon \Nlf\to \R$ be bounded measurable
  functions. Define $\phi(\vv,\ww,\S)=a(\vv,\ww,\S)f(\tau_\vv\S)g(\tau_\ww\S)$
  and observe that $\phi$ is covariant and integrable, and therefore, by the
  Mass Transport Principle (Lemma \ref{masstransport})
  \begin{align*}
    \int E_\sso f(\sso)g(\ss_1^\circ)\Q(d\sso)
    &=(1/\E a(0))\E\sum_{\vv\in\S}a(0,\vv,\S)f(\S)g(\tau_\vv\S)\\
    &=(1/\E[ a(0)])\E\sum_{\vv\in\S}a(0,\vv,\S)f(\tau_\vv\S)g(\S)\\
    &=\int E_\sso f(\ss_1^\circ)g(\sso)\Q(d\sso).
  \end{align*}
  To show ergodicity, let $A\in\B(\Nlf^\circ)$ be an invariant set for the
  dynamics, that is $A$ is such that $\ss_0^\circ\in A$ implies $\ss_1^\circ\in
  A$. This implies that for any neighbor $s$ of the origin, $\tau_s\ss_0^\circ
  \in A$. Iterating the argument one shows that, if $\ss^\circ\in A$ then
  $\tau_\vv\sso\in A$ for every $\vv\in\sso$. Therefore, 
\[
\Pe_o(A)
:=\Pe(\S\in A)=\lim_{\Lambda\nearrow\R^d}\frac{1}{|\Lambda|}
\sum_{\vv\in\S}\one_{\tau_\vv\S\in A}\in\{0,1\}
\] 
and, as $\Q\ll\Pe_o$ and $\Pe_o\ll\Q$, it follows that
$\Q(A)\in\{0,1\}$.
\end{proof}

\begin{Proposition}\label{prop:momentos-passeio}
  Let $r\geq 1$ and $\gamma$ be a surface with covariant gradient. If
  $c:=2\C(|\nabla\gamma|^r)<\infty$  then
\[
\EE|\gamma(X_t)-\gamma(X_0)|^r
\;=\;\E E_{\S}|\gamma(X_t)-\gamma(X_0)|^r
\;\leq\; \E( a(0)E_{\S}|\gamma(X_t)-\gamma(X_0)|^r)\;\leq\; c m^r(t),
\]
where $m^r(t)$ is the $r$-th moment of a Poisson random variable with mean $t$.
\end{Proposition}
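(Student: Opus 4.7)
The plan is to write the walk displacement as a telescoping sum of gradients along the discrete skeleton, bound the $r$-th power by Jensen, and then exploit the fact that the environment seen from the walker is stationary under the measure $\Q$ tilted by $a(0)$.

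The first equality $\EE|\gamma(X_t)-\gamma(X_0)|^r=\E E_\S|\gamma(X_t)-\gamma(X_0)|^r$ is Fubini, and the first inequality uses $a(0,\S)\ge 1$ $\Pe$-a.s. For the main bound, I would write
\[
\gamma(X_t)-\gamma(X_0) \;=\; \gamma(\dX_{N(t)})-\gamma(\dX_0)
\;=\;\sum_{k=1}^{N(t)}\nabla\gamma(\dX_{k-1},\dX_k,\S),
\]
and apply the inequality $|\sum_{k=1}^n x_k|^r\le n^{r-1}\sum_{k=1}^n |x_k|^r$ (valid for $r\ge 1$ by Jensen's inequality on the uniform measure on $\{1,\dots,n\}$). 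Then, using that $N$ is independent of $(\dX_n)_{n\ge 0}$,
\[
\E\bigl[a(0)E_\S|\gamma(X_t)-\gamma(X_0)|^r\bigr]
\;\le\;\sum_{n=0}^\infty P(N(t)=n)\,n^{r-1}\sum_{k=1}^{n}
\E\bigl[a(0,\S)\,\dE_\S|\nabla\gamma(\dX_{k-1},\dX_k,\S)|^r\bigr].
\]

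The crucial step is to show each summand equals $c = 2\C(|\nabla\gamma|^r)$, independently of $k$. By definition of $\Q$,
\[
\E\bigl[a(0,\S)\,\dE_\S|\nabla\gamma(\dX_{k-1},\dX_k,\S)|^r\bigr]
\;=\;\E[a(0)]\int \dE_\ss|\nabla\gamma(\dX_{k-1},\dX_k,\ss)|^r\,\Q(d\ss).
\]
Since $\nabla\gamma$ is covariant, $|\nabla\gamma(\dX_{k-1},\dX_k,\ss)|^r = |\nabla\gamma(0,\dX_k-\dX_{k-1},\tau_{\dX_{k-1}}\ss)|^r$, and by Lemma \ref{meio_particula} the environment process $\ss_n^\circ = \tau_{\dX_n}\ss$ is stationary under $\Q$; consequently the joint law under $\dP_\ss\otimes\Q(d\ss)$ of $(\tau_{\dX_{k-1}}\ss,\dX_k-\dX_{k-1})$ matches that of $(\ss,\dX_1)$ with $\ss$ of law $\Q$ and $\dX_1$ the first step from $0$. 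Therefore
\[
\int \dE_\ss|\nabla\gamma(\dX_{k-1},\dX_k,\ss)|^r\,\Q(d\ss)
\;=\;\int \sum_{v\in\ss}\frac{a(0,v,\ss)}{a(0,\ss)}|\nabla\gamma(0,v,\ss)|^r\,\Q(d\ss)
\;=\;\frac{1}{\E[a(0)]}\E\!\sum_{v\in\S}a(0,v,\S)|\nabla\gamma(0,v,\S)|^r
\;=\;\frac{c}{\E[a(0)]},
\]
where the middle equality is again the definition of $\Q$, with the $a(0,\S)$ factor cancelling the denominator $a(0,\ss)$ from the transition probability. Multiplying by $\E[a(0)]$ yields $c$.

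Plugging back, the inner sum $\sum_{k=1}^n$ contributes $nc$, and so
\[
\E\bigl[a(0)E_\S|\gamma(X_t)-\gamma(X_0)|^r\bigr]
\;\le\; c\sum_{n=0}^\infty n^r P(N(t)=n)\;=\;c\,m^r(t),
\]
completing the bound. The only delicate point is the clean use of stationarity of $\ss_n^\circ$ under $\Q$ to reduce every $k$-th term to the same one-step expectation; once covariance of $\nabla\gamma$ and the identity $\E[a(0)f(\S)]=\E[a(0)]\int f\,d\Q$ are combined, all factors of $a(0)/a(0,\ss)$ cancel and the Campbell-type expression for $\C(|\nabla\gamma|^r)$ appears directly.
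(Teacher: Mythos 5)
Your proof is correct and follows essentially the same route as the paper's: write $\gamma(X_t)-\gamma(X_0)$ as a telescoping sum over the discrete skeleton, apply the $n^{r-1}$ Jensen/H\"older bound, and then use the stationarity of the environment chain $\ss_n^\circ$ under $\Q$ (Lemma \ref{meio_particula}) to reduce every increment to the one-step quantity, which by the definition of $\Q$ and the cancellation of the $a(0,\ss)$ factors gives exactly $2\C(|\nabla\gamma|^r)$. The only difference is cosmetic: you make explicit the equidistribution of $(\tau_{\dX_{k-1}}\ss,\dX_k-\dX_{k-1})$ with $(\ss,\dX_1)$ under $\dP_\ss\otimes\Q$, a step the paper performs implicitly when it replaces $\E(a(0)E_\S|\nabla\gamma(0,\dX_k-\dX_{k-1},\ss_{k-1})|^r)$ by $\E(a(0)E_\S|\nabla\gamma(0,\dX_1,\S)|^r)$.
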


\begin{proof}
Suppose, without loss of generality, that $\gamma(0)\equiv0$, and observe that
\begin{align*}
\E(E_{\S}|\gamma(X_t)|^r)
&=\sum_{n=1}^\infty\E(E_{\S}|\gamma(\tilde{X}_n)|^r\one_{N(t)=n})
=\sum_{n=1}^\infty\E(E_{\S}|\gamma(\tilde{X}_n)|^r)e^{-t}\frac{t^n}{n!}
\end{align*}
On the other hand, by H\"older's inequality
\begin{align*}
|\gamma(\tilde{X}_n)|^r	
&=	|\sum_{k=1}^n(\gamma(\tilde{X}_k)-\gamma(\tilde{X}_{k-1}))|^r
\leq n^{r-1}\sum_{k=1}^n|\nabla\gamma(\tilde{X}_{k-1},\tilde{X}_{k})|^r.
\end{align*}
Finally, as $\tilde{X}_{k}-\tilde{X}_{k-1}$ depends only on $\ss_k$ and
$\ss_{k-1}$, by the stationarity of $\ss_n$ under $\Q$ (Lemma
\ref{meio_particula}), it follows that
\begin{align*}
\E(E_{\S}|\gamma(X_t)|^r)
&\leq \sum_{n=1}^\infty n^{r-1}
\sum_{k=1}^n\E(E_{\S}|\nabla\gamma(\tilde{X}_{k-1},\tilde{X}_{k},\S)|^r)e^{-t}\frac{t^n}{n!}\\
&\leq \sum_{n=1}^\infty n^{r-1}
\sum_{k=1}^n\E(a(0)E_{\S}|\nabla\gamma(0,\tilde{X}_{k}-\tilde{X}_{k-1},\ss_{k-1})|^r)e^{-t}\frac{t^n}{n!}\\
&= \sum_{n=1}^\infty n^{r}\E(a(0)E_{\S}|\nabla\gamma(0,\tilde{X}_{1},\S)|^r)e^{-t}\frac{t^n}{n!}\\
					&= \E(\sum_{\vv\in \S}a(0,s)|\nabla\gamma(0,\vv,\S)|^r)m^r(t). \qedhere
\end{align*}
\end{proof}

To obtain estimates for $\C(|\nabla\eta_t^\gamma|^r)$ we study the process of
the \emph{environment as seen from the random walker} on $\S$, as in
\cite{FerrariWick}.  The law of $\S$ is reversible and ergodic for this process,
which allows us to make estimates on the original random walk. Let $\rw$ as in Section \ref{construction} be a random walk on the points of $\S$
starting at $0\in\S$, and denote its law by $P_\S^0$. 

\begin{proof}[Proof of Lemma \ref{momentosprocesso}]
  From the covariance of $\nabla\gamma$ we can assume, without loss of generality, that $\gamma(0)\equiv 0$. By the Mass Transport Principle Lemma \ref{masstransport} and
  Proposition \ref{prop:momentos-passeio},
\begin{align*}
  \C(|\nabla\psi_t|^r)&=\;\frac{1}{2}\EE\sum_{\vv\in\S}a(0,\vv)|\nabla\psi_t(0,\vv)|^r\;
  \leq\; 2^{r-2}\EE\sum_{\vv\in\S}a(0,\vv)[|\psi_t(0)|^r+|\psi_t(0)|^r]\\
  &\leq\; 2^{r-1}\EE a(0)|\psi_t(0)|^r
  \;\leq\; 2^{r-1}\EE a(0)|\gamma(B^0_{[t,0]})|^r
  \;\leq\; 2^r\C(|\nabla\gamma|^r)m^r(t) \quad \Pe\mbox{-a.s.}. \qedhere
\end{align*} 
\end{proof}

The following Lemma is a part of the proof of Theorem 2.1 in \cite{FerrariWick};
we omit the proof.

\begin{Lemma}\label{lema:seg_mom_disc}
If a surface $\gamma$ satisfies
\[
\E\sum_{\vv\in S}a(0,s)|\gamma(\vv)|^2<\infty,
\]
then
\begin{equation}\label{eq:varquaddisc}
\lim_{n\to\infty}\frac{\E( a(0)E|\gamma(\dX_n)|^2)}{n}<\infty, \qquad\hbox{ and }\qquad
\lim_{t\to\infty}\frac{\E( a(0)E|\gamma(X_t)|^2)}{t}<\infty.
\end{equation}

\end{Lemma}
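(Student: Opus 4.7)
The plan is to reduce the continuous-time statement to the discrete one by Poissonization: $X_t = \dX_{N(t)}$ with $N$ an independent unit-rate Poisson process, so
\[
\EE(a(0)E|\gamma(X_t)|^2) = \sum_n P(N(t)=n)\,\EE(a(0)E|\gamma(\dX_n)|^2),
\]
and a bound $\le Cn$ in discrete time propagates to $\le Ct$ in continuous time.

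For the discrete statement, assume without loss of generality $\gamma(0,\S)\equiv 0$, exactly as in the proof of Proposition \ref{prop:momentos-passeio}. By the covariance of $\nabla\gamma$, writing $\ss_k^\circ := \tau_{\dX_k}\S$,
\[
\gamma(\dX_n) = \sum_{k=1}^{n} Y_k, \qquad Y_k := \nabla\gamma(\dX_{k-1},\dX_k,\S).
\]
Under the weighted measure $\Q$, Lemma \ref{meio_particula} says $(\ss_k^\circ)$ is a reversible stationary ergodic Markov chain, so $(Y_k)_{k\ge 1}$ is a stationary $L^2(\Q)$ sequence with
\[
E_\Q[Y_1^2] = \frac{2\,\C(|\nabla\gamma|^2)}{\E a(0)},
\]
and the Mass Transport Principle applied to the covariant antisymmetric field $a(0,\vv)\nabla\gamma(0,\vv,\S)$ yields $E_\Q[Y_1]=0$. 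Since $\EE[a(0)E|\gamma(\dX_n)|^2] = (\E a(0))\,E_\Q[|\sum_{k=1}^n Y_k|^2]$, it suffices to prove $E_\Q[|\sum_{k=1}^n Y_k|^2] \le C n$.

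The reason this holds is that $(Y_k)$ is an \emph{exact current} of the reversible chain: since $\nabla\gamma$ satisfies the cocycle property, the partial sums $S_n = \sum_{k=1}^n Y_k = \gamma(\dX_n)$ are path-independent, so on the orbit of the starting configuration one may formally write $S_n = f(\ss_n^\circ) - f(\ss_0^\circ)$ with $f$ obtained by integrating $\nabla\gamma$ along any path, and the standard Dirichlet-form bound for reversible chains $E_\Q[S_n^2] \le 2n\,\mathcal{D}(f,f) = n\,E_\Q[Y_1^2]$ would deliver the claim. The technical obstruction is that $\gamma$ is not globally translation invariant, so such an $f$ is not directly a measurable function on $\Nlf^\circ$. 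One bypasses this by a Kipnis--Varadhan martingale approximation: decompose $Y_k = M_k + g(\ss_{k-1}^\circ)$ with $g(\ss) := \Delta\gamma(0,\ss)/a(0,\ss) = E[Y_1 \,|\, \ss_0^\circ = \ss]$ and $M_k$ a martingale difference; the martingale part gives $E_\Q[(\sum_k M_k)^2] = n\,E_\Q[M_1^2] = O(n)$ trivially, while the drift partial sum $\sum g(\ss_{k-1}^\circ)$ is $O(n)$ in $L^2$ by Kipnis--Varadhan once one verifies that the cocycle structure of $\nabla\gamma$ places $g$ in the negative Sobolev space $H_{-1}$ of the reversible chain. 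The verification of this $H_{-1}$-membership is the main technical step and follows the route of \cite{FerrariWick}; combined with the martingale bound and Poissonization it completes the proof.
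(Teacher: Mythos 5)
The paper does not actually prove this lemma; it cites Theorem~2.1 of \cite{FerrariWick} and omits the argument, and your reconstruction follows exactly that route: Poissonize to reduce the continuous-time claim to the discrete one, pass to the environment chain $(\ss_n^\circ)$ which is reversible and ergodic under $\Q$ by Lemma~\ref{meio_particula}, and apply the Kipnis--Varadhan estimate to the stationary antisymmetric increment sequence $Y_k=\nabla\gamma(\dX_{k-1},\dX_k,\S)$.

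One remark on where the weight of the argument sits. You present the $H_{-1}$ membership of $g(\sso):=\Delta\gamma(0,\sso)/a(0,\sso)$ as ``the main technical step,'' but for a reversible chain it is essentially automatic. Reversibility under $\Q$ combined with the flux property $\nabla\gamma(\vv,\ww)=-\nabla\gamma(\ww,\vv)$ give, for any test function $f$ on $\Nlf^\circ$,
\[
2\,E_\Q\bigl[g(\ss_0^\circ)\,f(\ss_0^\circ)\bigr]
= E_\Q\bigl[Y_1\bigl(f(\ss_0^\circ)-f(\ss_1^\circ)\bigr)\bigr]
\le \bigl(E_\Q[Y_1^2]\bigr)^{1/2}\bigl(E_\Q[(f(\ss_0^\circ)-f(\ss_1^\circ))^2]\bigr)^{1/2},
\]
which is precisely $\|g\|_{-1}^2\le\tfrac12 E_\Q[Y_1^2]<\infty$. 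So it is the antisymmetry of the gradient field, not the cocycle property, that places $g$ in $H_{-1}$; the cocycle property is what identifies the additive functional $\sum_k Y_k$ with $\gamma(\dX_n)$. You do correctly spot the genuine obstruction: $\gamma$ itself is not translation invariant, so there is no $L^2(\Q)$ function $f$ on $\Nlf^\circ$ with $f(\ss_1^\circ)-f(\ss_0^\circ)=Y_1$, and one cannot use the naive Dirichlet-form bound but must pass through the Kipnis--Varadhan martingale decomposition. Apart from this mislabelling of where the difficulty lies, the proposal is correct and matches the approach of the cited reference.
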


\bigskip
\noindent{\bf Acknowledgments }
We thank Marek Biskup for his comments and to
the referee for the carefully reading of the manuscript and for many comments that
helped us to improve the paper.

\bibliographystyle{amsplain}
\bibliography{bibliografia}

\end{document}